\documentclass[12pt,a4paper]{amsart}

\usepackage[utf8]{inputenc}

\usepackage{mathtools}
\usepackage{amsfonts}
\usepackage{amssymb}
\usepackage{amsthm}

\usepackage[usenames,dvipsnames,svgnames,table]{xcolor}
\usepackage{enumerate}
\usepackage{ytableau}
\usepackage[makeroom]{cancel}
\usepackage{paralist}
\usepackage{hyperref}
\usepackage{caption}

\delimitershortfall-1sp
\DeclarePairedDelimiter{\abs}{\lvert}{\rvert}
\newcommand*{\aA}{\mathcal{A}}
\newcommand*{\bB}{\mathcal{B}}
\newcommand*{\cC}{\mathfrak{C}}
\newcommand*{\pP}{{\smash{\mathrm{P}}}}
\newcommand*{\pPl}{{\smash{\mathrm{P}}}_{\smash{\mathrm{lps}}}}
\newcommand*{\pPr}{{\smash{\mathrm{P}}}_{\smash{\mathrm{rps}}}}
\newcommand*{\qQ}{{\smash{\mathrm{Q}}}}
\newcommand*{\qQl}{{\smash{\mathrm{Q}}}_{\smash{\mathrm{lps}}}}
\newcommand*{\qQr}{{\smash{\mathrm{Q}}}_{\smash{\mathrm{rps}}}}
\newcommand*{\bell}{{\smash{\mathrm{lps}}}}
\newcommand*{\belr}{{\smash{\mathrm{rps}}}}
\newcommand*{\xps}{{\smash{\mathrm{xps}}}}
\newcommand*{\smx}{{\smash{\mathrm{x}}}}
\newcommand*{\dstd}{{\smash{\mathrm{std}}}^{-1}}
\newcommand*{\stdl}{{\smash{\mathrm{std}}}_{\smash{\mathrm{l}}}}
\newcommand*{\stdr}{{\smash{\mathrm{std}}}_{\smash{\mathrm{r}}}}
\newcommand*{\std}{{\smash{\mathrm{std}}}}
\newcommand*{\Std}{{\smash{\mathrm{Std}}}}

\newcommand*{\Dstd}{{\smash{\mathrm{Dstd}}}}
\newcommand*{\cont}{{\smash{\mathrm{cont}}}}
\newcommand*{\sh}{{\smash{\mathrm{sh}}}}
\newcommand*{\tab}{{\smash{\mathrm{Tab}}}}
\newcommand*{\pstab}{{\smash{\mathrm{PSTab}}}}
\newcommand*{\sml}{\smash{\mathrm{l}}}
\newcommand*{\smr}{\smash{\mathrm{r}}}

\makeatletter
\newcommand{\biggg}{\bBigg@{4}}
\newcommand{\bigggl}{\mathopen\biggg}

\newcommand{\bigggr}{\mathclose\biggg}
\makeatother

\newcommand*\lps{lPS}
\newcommand*\rps{rPS}
\newcommand*\ps{\text{PS}}

\DeclarePairedDelimiter{\parens}{\lparen}{\rparen}
\newcommand*{\evlit}{{\mathrm{ev}}}
\newcommand*{\ev}[2][]{\evlit\parens[#1]{#2}}

\theoremstyle{plain}
\newtheorem{thm}{Theorem}[section]
\newtheorem{lem}[thm]{Lemma}
\newtheorem{prop}[thm]{Proposition}
\newtheorem{cor}[thm]{Corollary}

\newtheorem{remark}[thm]{Remark}

\theoremstyle{definition}

\newtheorem{algorithm}[thm]{Algorithm}

\makeindex

\title{Combinatorics of patience sorting monoids}

\author{Alan J. Cain}
\address{Centro de Matem\'atica e Aplica\c{c}\~oes, Faculdade de Ci\^encias e
Tecnologia, Universidade Nova de Lisboa,
	2829-516 Caparica, Portugal}
\email{a.cain@fct.unl.pt}
\thanks{The first author was supported by
an Investigador FCT fellowship (IF/01622/2013/CP1161/CT0001).}

\author{Ant\'onio
Malheiro}
\address{Centro de Matem\'atica e Aplica\c{c}\~oes and Departamento de
	Matem\'atica, Faculdade de Ci\^encias e Tecnologia, Universidade Nova de
Lisboa,
	2829-516 Caparica, Portugal}
\email{ajm@fct.unl.pt}
\thanks{For the first two authors,
this work was partially supported by the Funda\c{c}\~ao para a
Ci\^encia e a Tecnologia (Portuguese Foundation for Science and Technology)
through the project UID/MAT/00297/2013 (Centro de Matem\'atica e
Aplica\c{c}\~oes), and the project PTDC/MHC-FIL/2583/2014}

\author{F\'abio M. Silva}
\address{Departamento de Matem\'atica
	and CEMAT-CI\^ENCIAS, Faculdade de Ci\^encias, Universidade de Lisboa,
Lisboa
	1749-016, Portugal.}
\email{femsilva@fc.ul.pt}
\thanks{The third author was supported by an FCT Lismath fellowship
(PD/BD/52644/2014) and partially supported by the FCT project CEMAT-Ci\^encias
UID/Multi/04621/2013}

\begin{document}

\begin{abstract}
  This paper makes a combinatorial study of the two monoids and the two types of tableaux that arise from the two
  possible generalizations of the Patience Sorting algorithm from permutations (or standard words) to words.  For both
  types of tableaux, we present Robinson--Schensted--Knuth-type correspondences (that is, bijective correspondences
  between word arrays and certain pairs of semistandard tableaux of the same shape), generalizing two known
  correspondences: a bijective correspondence between standard words and certain pairs of standard tableaux, and an
  injective correspondence between words and pairs of tableaux.

  We also exhibit formulas to count both the number of each type of tableaux with given evaluations (that is, containing
  a given number of each symbol). Observing that for any natural number $n$, the $n$-th Bell number is given by the
  number of standard tableaux containing $n$ symbols, we restrict the previous formulas to standard words and extract a
  formula for the Bell numbers. Finally, we present a `hook length formula' that gives the number of standard tableaux
  of a given shape and deduce some consequences.
\end{abstract}

\keywords{ Patience Sorting algorithm; Robinson--Schensted--Knuth 
correspondence; Bell number; hook-length formula}

\maketitle
\tableofcontents

\section{Introduction}

Patience Sorting is a one-person card game that was created as a method to sort decks of cards. The name has its origins
in the works of Mallows, who credited the game to Ross \cite{Mallows62,10.2307/2028347}.

The idea of the game is to split a given shuffled deck of cards on the
symbols $1,2,\ldots,n$ into sorted subsequences called columns (piles
according to \cite{MR1694204,BL2005}) using Mallows' Patience Sorting
procedure, with the goal of finishing with as few piles as possible
\cite{MR1694204}. The decks of cards can therefore be seen as standard
words from the symmetric group of order $n$ (denoted $\mathfrak{S}_n$). According
to this notation, following Algorithm~1.1 of \cite{BL2005}, this
procedure can be described in the following way
\begin{algorithm}[Mallows' Patience Sorting procedure]
	\label{alg:MallowPS}
	~\par\nobreak
	\textit{Input:} A standard word $\sigma=\sigma_1\sigma_2\cdots
	\sigma_n \in \mathfrak{S}_n$.

	\textit{Output:} A set of columns $\{c_1,c_2,\ldots,c_m\}$.

	\textit{Method:}
	\begin{itemize}
		\item Put $\sigma_1$ on the bottom of the first column, $c_1$;

		\item for each remaining symbol $\sigma_i$, with $i\in\{2,\ldots,
		n\}$, if $b_1,...,b_k$ are the symbols on the bottom of the
		columns $c_1,...,c_k$ that have already been formed:
		\begin{itemize}
			\item if $\sigma_i>b_k$, then put the symbol $\sigma_i$ into
			the bottom of the column $c_{k+1}$;
			\item otherwise, find the leftmost symbol $b_j$ from $b_1,
			\ldots, b_k$ that is greater than $\sigma_i$ and put $\sigma_i$
			on the bottom of that column.
		\end{itemize}
	\end{itemize}
\end{algorithm}

As noted in \cite{BL2005,MR1694204}, there are certain similarities between
this algorithm and Schensted's insertion algorithm for standard words. In
fact, as noted by the authors of \cite{BL2005}, this algorithm can be seen
as a non-recursive version of Schensted's insertion algorithm. This
perspective suggests that there would be an analogue of the Robinson correspondence (cf.
\cite{MR1464693}): a bijection between
standard words and pairs of standard Young tableaux of the same
shape, discovered by Robinson in \cite{10.2307/2371609} and
later rediscovered by Schensted for the context of words \cite{MR0121305}.
Burstein and Lankham \cite{BL2005} constructed such an analogue by considering an algorithm that simultaneously
performs Mallows' Patience Sorting procedure and the recording of that procedure.

Following the terminological distinction made in \cite{MR1464693}, the original Robinson correspondence (between
standard words and pairs of standard Young tableaux of the same shape) has two extensions:
\begin{itemize}
	\item the Robinson--Schensted correspondence, which is a
	bijection between words and pairs $(P,Q)$, where $P$ is a semistandard 
Young tableau and $Q$ is a standard Young tableaux of the same shape; and
	\item  the Robinson--Schensted--Knuth correspondence
	\cite{knuth1970permutations}, a bijection between two-rowed lexicographic
	arrays and pairs of semistandard Young tableaux having the
	same shape.
\end{itemize}
Moreover, Schensted's insertion algorithm gives rise to a combinatorial monoid, the plactic monoid \cite{MR646486},
which can be described as the quotient of the free monoid over the congruence which relates words in that free monoid
that yield the same semistandard Young tableau under Schensted's insertion algorithm (see \cite[\S~6]{MR1905123} for more details).

Regarding the combinatorial monoid part, considering a possible
ge\-ne\-ra\-li\-za\-tion of Algorithm~\ref{alg:MallowPS} to words and using a
similar method, Rey constructed the Bell monoid \cite{Maxime07}. In fact,
in that same paper, Rey proposes a Robinson--Schensted-like correspondence
between words and pairs of \ps\ tableaux (Bell tableaux,
in the notation of \cite{Maxime07}) $(P,Q)$ having the same shape, $P$
being semistandard and $Q$ standard. However, this correspondence proves
to be only an injection.

In \cite{MR1694204} two possible generalizations of Patience Sorting
for words were proposed, one of them coinciding with the one studied
by Rey. So, following these generalizations, in \cite{1706.06884} the present authors
constructed and studied two distinct monoids, the $\bell$ monoid (which
coincides with the Bell monoid of \cite{Maxime07}) arising from \lps\
tableaux and the $\belr$ monoid arising from \rps\ tableaux.

Our goal in Section~3 is to provide bijective Robinson--Schensted-like
and Robinson--Schensted--Knuth-like correspondences for both the $\bell$
and the $\belr$ monoids, extending the ideas of \cite{BL2005}.

In sections 4 and 5 we turn to another topic, namely counting \lps\
and \rps\ tableaux. As noted in \cite{BL2005}, given a totally
ordered alphabet $\bB$ and a partition of $\bB$, there is a natural
identification between the sets that compose that partition of $\bB$
and the columns of a standard \ps\ tableau obtained from the
insertion of certain standard words over $\bB$ under
Algorithm~\ref{alg:MallowPS}. This identification gives rise to a
one-to-one correspondence between the partitions of $\bB$ and the
standard \ps\ tableaux obtained from applying
Algorithm~\ref{alg:MallowPS} to the standard words over $\bB$. Since
the $n$-th Bell number counts the number of distinct ways to partition
a set with $n$ distinct symbols (cf. sequence A000110 in the OEIS), the
$n$-th Bell number also provides the number of standard \ps\ tableaux
over a totally ordered alphabet with $n$ distinct symbols. Furthermore,
as the Stirling number of second kind, $S(n,k)$, is equal to the number
of distinct ways to partition $n$ distinct elements into $k$ distinct
sets \cite{MR1411676}, the sum of $S(n,k)$ with $k$ ranging
between $1$ and $n$ is also equal to $n$-th Bell number. In Section~4
we will follow a similar strategy. More specifically, we will count
the number of \lps\ (resp. \rps) tableaux over words with the
same number of each symbols (that is, with the same
evaluation) and having the same \lps\ (\rps) bottom row. Then, by summing
over all the possible \lps\ (\rps) bottom rows, we extract a formula to
count the number of \lps\ (\rps) tableaux over words with the
same evaluation. Restricting the \lps\ and \rps\ formulas to standard
\ps\ tableaux we will see that they coincide and we will deduce
a formula to count the Bell numbers.

Section~5 also gives a formula connecting the \ps\ tableaux and Bell numbers. However,
we will follow an entirely different approach. Our strategy will be to provide an
analogue of the `hook length formula', which gives the number
of standard Young tableaux over $\{1,\ldots,n\}$ with a given
shape.
This rule is due to J.S. Frame, G. de B. Robinson, and R.M. Thrall \cite{frame1954hook} and has been generalized to
other combinatorial objects such as shifted Young tableaux (see \cite{thrall1952combinatorial},
\cite{sagan1980selecting}) and trees (see \cite{sagan1989probabilistic}). In this paper we provide an analogue of this
result for standard \ps\ tableaux over an arbitrary totally ordered alphabet $\bB$. By summing the hook length formulas
over all the possible shapes for standard \ps\ tableaux over $\bB$, we also obtain the Bell number of order $\abs{\bB}$.
Furthermore, this rule together with the injectivity of the Robinson--Schensted-like correspondence from \cite{Maxime07}
allows us to deduce an upper bound on the number of standard words that insert to a specific standard \ps\ tableau.

%
%
%
%
%
%
%
%
%

\section{Preliminaries and notation}

In this section we introduce the notions that we shall use along the paper.
For more details concerning these constructions see for instance \cite{MR1905123},
\cite{MR2142078}, \cite{howie1995fundamentals} and
\cite{1706.06884}.
%

\subsection{Words and standardization}
\label{alphabetswords}

The alphabets that we will consider on this paper will always be totally ordered.

Given an alphabet $\bB$, $\bB^*$ denotes the free monoid
over $\bB$. In this paper, $\aA=\{1 < 2 < 3 <\ldots \}$ denotes the
set of natural numbers viewed as an infinite totally ordered alphabet.
Furthermore, for any $n\in \mathbb{N}$, the set $\aA_n$ denotes the totally
ordered subset of $\aA$, on the symbols $1,\ldots, n$.


A \emph{word} over an arbitrary alphabet is an element of the free
monoid over that alphabet, with the symbol $\varepsilon$ denoting the
empty word. In particular, a \emph{standard word} over a finite (resp. infinite)
alphabet is a word where  each symbol from the alphabet occurs exactly (at most) once.
For any finite  alphabet $\bB$, denote by $\mathfrak{S}(
\bB)$ the set of standard words over $\bB$. For example, if
$\bB=\{2,4,5\}$, then
\begin{align*}
\mathfrak{S}(\bB)=\{245,254,425,452,524,542\}.
\end{align*}
In the following paragraphs we will define several concepts that are
directly related with the notion of word. So, if $\bB$ is an
arbitrary alphabet and $w=w_1\cdots w_m\in\bB^*$, with
$w_1,\ldots,w_m\in \bB$,
then:
\begin{itemize}
	\item the \emph{length of $w$}, $\abs{w}$, is the number of symbols
	from $\bB$ that compose $w$, counting repetitions. If $w=\varepsilon$,
	then $\abs{w}=0$;
	\item 
	a word $u\in \bB^*$ is said to be a \emph{subword of} $w$ if there 
exists
	a sequence of indexes, $i_1,\ldots, i_k\in \mathbb{N}$, with $1\leq i_1
	<\ldots<i_k\leq m$, such that $u={w}_{{i}_1}\cdots {w}_{i_k}$;
	\item for any word $u\in \bB^*$, $u$ is a \emph{factor} of
	$w$ if there exist words $v_1,v_2\in \bB^*$,
	such that $w=v_1uv_2$;
	\item for any $a\in \bB$, the number of occurrences of $a$ in $w$, is
	denoted by $\abs{w}_a$;
	\item the \emph{content of} $w$, is the set $\cont(w)=\left\{a\in\bB:
	\abs{w}_a\geq 1\right\}$;
	\item the \emph{evaluation of} a word $w\in \bB^*$, denoted $\ev{w}$, 
is the $|\bB|$-tuple of non-negative integers, indexed in increasing order by 
the elements of $\bB$, whose $a$-th term is $\abs{w}_a$.
\end{itemize}

Next, we define two different processes for standardizing words over $\bB$,
both allowing us to assign to any word from $\bB^*$, a standard word of the
same length over a new alphabet.

Consider the alphabet
\begin{equation*}
\mathcal{C}(\bB)=\{a_{i}: a\in \bB\ \wedge\ i\in \aA\}
\end{equation*}
totally ordered in the following way: for any $a_i,c_j\in\mathcal{C}(\bB)$
\begin{equation*}
a_{i}<_{\mathcal{C}(\bB)} c_{j} \Leftrightarrow a<_{\bB}c \vee \left(a=c\wedge i<_{\aA}j\right).
\end{equation*}

Given a word $w$ over $\bB$,
$\stdl(w)$ denotes the \emph{left to right standardization of $w$}, which is
the word obtained by reading $w$ from left to right and attaching
to each symbol $a\in \cont(w)$ an index $i$ to the $i$-th occurrence of $a$.
The \emph{right to left standardization of $w$}, $\stdr(w)$, is obtained in
the same way, but reading the word from right to left. For example, considering
the word $w=4124321
\in\aA_4^*$,
\begin{alignat*}{9}
&w\quad && =\quad &&{\color{Green}4}\ &&1\ &&{\color{red}2}\ &&{\color{Green}4}\
&&{\color{blue}3}\ &&{\color{red}2}\ &&1\ \\
&\stdl(w)\quad && =\quad  &&{\color{Green}4_1}\ &&1_1\ &&{\color{red}2_1}\
&&{\color{Green}4_2}\ &&{\color{blue}3_1}\ &&{\color{red}2_2}\ &&1_2\ \\
&\stdr(w)\quad && =\quad  &&{\color{Green}4_2}\ &&1_2\ &&{\color{red}2_2}\
&&{\color{Green}4_1}\ &&{\color{blue}3_1}\ &&{\color{red}2_1}\ &&1_1\
\end{alignat*}
For any word $w$ over $\bB$, it is straightforward that both $\stdl(w)$ and $\stdr(w)$
are standard words over the alphabets $\cont(\stdl(w))\subseteq \mathcal{C}(\bB)$
and $\cont(\stdr(w))\subseteq \mathcal{C}(\bB)$, respectively, with the order
induced from $\mathcal{C}(\bB)$.

Henceforth, any word arising from the processes of left to right or right to
left standardization will be called a \emph{standardized word}. Given any
symbol $a_{b}\in\mathcal{C}(\bB)$, the \emph{underlying symbol of}
$a_{b}$ is $a$ and the \emph{index of} $a_{b}$ is $b$.


Given any standardized word $w\in\mathcal{C}(\bB)$, the \emph{de-standardization 
of} $w$,
$\dstd(w)\in\bB$, is the word obtained from $w$ by erasing all of its indexes.
  
By the uniqueness of the writing of words, together with the fact that any
symbol in any position of a given word coincides with the underlying symbol
in the same position of the word obtained through any of the previous
processes of standardization, we have the following


\begin{remark}\label{rmk:bijection_words}
	For each word $w$, using processes of standardization previously
	described we obtain unique standardized words $\stdl(w)$ and $\stdr(w)$.
	Moreover, by applying de-standardization to any of these standardized words,
	we obtain the word $w$.
\end{remark}

\subsection{\ps\ tableaux and insertion}

In this subsection we introduce the concept of pre-tableaux and recall
the basic concepts regarding patience sorting (\ps) tableaux over an
arbitrary totally ordered alphabet $\bB$. We will also recall the insertion
on \ps\ tableaux.


A \emph{composition diagram} is a concatenation of a finite collection of boxes 
arranged in  bottom-justified columns. Such a diagram is said to be a 
\emph{column diagram} if all its boxes are disposed vertically. 

A \emph{pre-column} over $\bB$ is a column diagram where each of 
its
boxes is filled with a symbol from $\bB$ and such that distinct boxes have
distinct symbols.
A \emph{pre-tableau over} $\bB$ is a composition diagram where all boxes 
are filled with symbols from $\bB$
 in such a way that distinct boxes have distinct symbols. For
instance, if
\begin{equation*} \label{exmp1}
\ytableausetup
{aligntableaux=center}
C=\begin{ytableau}
\none &  & \none & \none \\
\none &  &  &  \\
&  &  &
\end{ytableau},\ \text{ and }\
P=\begin{ytableau}
\none & 5 & \none \\
2 & 1 & 4 \\
7 & 3 & 6
\end{ytableau},
\end{equation*}
then $C$ is a composition diagram and $P$ is a pre-tableau.


An \emph{\lps\ (\rps) tableau} over $\bB$ is
a composition diagram where each box is filled with a symbol from $\bB$ and
such that when reading the symbols
of its columns from top to bottom they are in strictly (weakly) 
decreasing order and when reading the symbols in the bottom-row from left to 
right, they are in weakly
(strictly) increasing order.

A
\emph{standard \ps\ tableau over $\bB$} is an \lps\ tableau (or
alternatively an \rps\ tableau) such that each symbol of $\bB$ occurs at
most once, whereas a \emph{recording tableau} is a standard \ps\ tableau such
that if $m$ is the number of boxes in its underlying composition diagram, then
each symbol from $\aA_{m}$ occurs exactly once. For instance, if
\begin{align*} \label{exmp2}
\ytableausetup
{aligntableaux=center}
Q=\begin{ytableau}
\none &\none  & 4 \\
4 & 2 & 3 \\
1 & 1 & 2
\end{ytableau},\ R=\begin{ytableau}
4 &\none  & \none \\
1 & 2 & 4 \\
1 & 2 & 3
\end{ytableau},\ \text{ and }\
S=\begin{ytableau}
\none &\none  & 7 \\
6 & 3 & 5 \\
1 & 2 & 4
\end{ytableau},
\end{align*}
then $Q$ is an \lps\ tableau, $R$ is
an \rps\ tableau and $S$ is both a standard \ps\ and a recording
tableau. The tableaux $Q$ and $R$ can be considered as
tableaux over the alphabets $\aA$ or $\aA_n$ (for $n\geq 4$), whereas the
tableaux $P$ and $S$ can be viewed as tableaux over $\aA$ or $\aA_n$
(for $n\geq 7$).

For any natural $n\in\mathbb{N}$, $\lambda=(\lambda_1,\ldots,\lambda_m)\in
\mathbb{N}^m$ is a \emph{composition of} $n$, denoted by $\lambda\vDash n$, if
$\lambda_1 +\cdots+ \lambda_m=n$. Given an arbitrary pre-tableau $T$, such
that $c_1,c_2,\ldots,c_m$ are its columns from left to right, we will often refer
to this tableau using the notation $c_1c_2\cdots c_m$ instead of $T$. We make the following definitions:
\begin{itemize}
	\item the \emph{content of} $T$, $\cont(T)$, is the set composed by the symbols
	that occur in $T$;
	\item for any $i\in\{1,\ldots, m\}$, the \emph{length of the column} $c_i$
	is the number of boxes that compose $c_i$ and is denoted by $\abs{c_i}$, the
	\emph{length of the bottom row of} $T$ is equal to $m$, and the \emph{length
	of} $T$, $\abs{T}$, is given by the sum of the lengths of its columns
	$\abs{T}=\abs{c_1}+\cdots+\abs{c_m}$;
	\item the \emph{shape of} $T$, denoted by $\sh(T)$ is the composition formed
	by the lengths of the columns of $T$ from left to right, that is, $\sh(T)=\left(
	\abs{c_1},\ldots,\abs{c_m}\right)$. Note that this notion of shape is the dual of
	the usual notion of shape, as given in \cite{MR1905123};
	\item if $a$ belongs to the content of $T$ and $a$ occurs in the $i$-th
	column of $T$, counting columns from left to right and in the $j$-th
	box of the $i$-th column counting boxes from bottom to top, then the
	\emph{column-row position of} $a$ is given by the pair $(i,j)\in
	\mathbb{N}\times \mathbb{N}$;
	\item if $T$ is a tableaux over $\bB$, then for any $a\in
	\bB$, $\abs{T}_a$ denotes the number of symbols $a$ occurring in
	$T$ and the \emph{evaluation of} $T$ is the sequence
	(infinite or finite of length $\abs{\bB}$ if $\bB$ is,
	respectively, infinite or finite) whose $a$-th term is $\abs{T}_a$,
	for any $a\in \bB$.
\end{itemize}
Considering the standard \rps\ tableau $R$ given in the previous example,
the content of $R$ is given by the set $\cont(R)=\{1,2,3,4\}$, the length of $R$ is
$7$, the shape of $R$ is given by $\sh(R)=(3,2,2)$, the symbol $3$ occurs in the
column-row position	$(3,1)$ of $R$, seen as a tableau over $\aA$, the
evaluation of $R$ is given by the infinite sequence $(2,2,1,2,0,0,\ldots)$, seen as
a tableau over $\aA_4$, the evaluation of $R$ is given by the sequence $(2,
2,1,2)$.

Any two pre-tableaux are said to be equal if they have the same
shape and if in corresponding column-row positions of the tableaux
the symbols are equal. If $\bB\subseteq \aA_n$ for some $n$, and $\lambda\vDash\lvert{\bB}
\rvert$, then $\pstab_\bB(\mathcal{\lambda})$ denotes the set of standard \ps\
tableaux with shape $\lambda$ and whose content is $\bB$. Similarly,
$\tab_\bB(\lambda)$ denotes the set of pre-tableaux with shape $\lambda$
and whose content is $\bB$. Furthermore,
\begin{equation*}
\pstab_\bB=\bigcup_{\substack{\lambda\vDash \lvert{\bB}\rvert}} \pstab_\bB(\lambda)\ \text{ and }\
\tab_\bB=\bigcup_{\lambda\vDash \lvert{\bB}\rvert}
\tab_\bB(\lambda).
\end{equation*}
For example,
\begin{align*}
\ytableausetup
{mathmode, boxsize=1.1em, aligntableaux=center}
\pstab_{\{2,4,5\}}\left((2,1)\right)&=\left\{\begin{ytableau}
4\\
2 & 5
\end{ytableau},\begin{ytableau}
5\\
2 & 4
\end{ytableau}\right\}\\
\pstab_{\{2,4,5\}}&=\left\{\begin{ytableau}
5\\
4\\
2
\end{ytableau},\begin{ytableau}
4\\
2 & 5
\end{ytableau},\begin{ytableau}
5\\
2 & 4
\end{ytableau},\begin{ytableau}
\none & 5\\
2 & 4
\end{ytableau},\begin{ytableau}
2 & 4 & 5
\end{ytableau}\right\},
\end{align*}
while
\begin{align*}
\ytableausetup
{mathmode, boxsize=1.1em, aligntableaux=center}
&\null\kern-20mm\tab_{\{2,4,5\}}((2,1))\\
={}&\left\{\begin{ytableau}
2\\
4 & 5
\end{ytableau},\begin{ytableau}
2\\
5 & 4
\end{ytableau},\begin{ytableau}
4\\
2 & 5
\end{ytableau},\begin{ytableau}
4\\
5 & 2
\end{ytableau},\begin{ytableau}
5\\
2 & 4
\end{ytableau},\begin{ytableau}
5\\
4 & 2
\end{ytableau}\right\}
\displaybreak[0]\\
\tab_{\{2,4,5\}}={}&\bigggl\{\begin{ytableau}
2\\
4\\
5
\end{ytableau},\begin{ytableau}
2\\
5\\
4
\end{ytableau},\begin{ytableau}
4\\
2\\
5
\end{ytableau},\begin{ytableau}
4\\
5\\
2
\end{ytableau},\begin{ytableau}
5\\
2\\
4
\end{ytableau},\begin{ytableau}
5\\
4\\
2
\end{ytableau},\begin{ytableau}
2\\
4 & 5
\end{ytableau},\begin{ytableau}
2\\
5 & 4
\end{ytableau},\begin{ytableau}
4\\
2 & 5
\end{ytableau},\begin{ytableau}
4\\
5 & 2
\end{ytableau},\\
&\quad\begin{ytableau}
5\\
2 & 4
\end{ytableau},\begin{ytableau}
5\\
4 & 2
\end{ytableau},\begin{ytableau}
\none & 2\\
4 & 5
\end{ytableau},\begin{ytableau}
\none & 2\\
5 & 4
\end{ytableau},\begin{ytableau}
\none & 4\\
2 & 5
\end{ytableau},\begin{ytableau}
\none & 4\\
5 & 2
\end{ytableau},\begin{ytableau}
\none & 5\\
2 & 4
\end{ytableau},\begin{ytableau}
\none & 5\\
4 & 2
\end{ytableau},\\[-4mm]
&\quad\begin{ytableau}
2 & 4 & 5
\end{ytableau}, \begin{ytableau}
2 & 5 & 4
\end{ytableau},\begin{ytableau}
4 & 2 & 5
\end{ytableau},\begin{ytableau}
4 & 5 & 2
\end{ytableau},\begin{ytableau}
5 & 2 & 4
\end{ytableau},\begin{ytableau}
5 & 4 & 2
\end{ytableau}\bigggr\}.
\end{align*}


The following algorithm describes the insertion of an arbitrary word into
a \ps\ tableau, merging in one algorithm the algorithms 3.1 and 3.2
of \cite{1706.06884}. We will use the notation $\pPl()$, $\pPr()$
instead of the notation $\mathfrak{R}_\ell()$, $\mathfrak{R}_r()$, respectively, of
\cite{1706.06884}.

\begin{algorithm}[\ps\ insertion of a word]\label{alg:PSinsertion}
	~\par\nobreak
	\textit{Input:} A word $w$ over a totally ordered alphabet.

	\textit{Output:} An \lps\ tableau $\pPl(w)$ (resp., \rps\
	tableau $\pPr(w)$).

	\textit{Method:}
	\begin{enumerate}
		\item If $w=\varepsilon$, output the empty tableaux
		$\emptyset$. Otherwise:
		\item $w=w_1\cdots w_n$, with symbols $w_1,\ldots,w_n$ of the 
alphabet.
		Setting
		\begin{align*}
		\pPl(w_1)=\ytableausetup
		{boxsize=1.1em, aligntableaux=center}\begin{ytableau}
		w_1 \end{ytableau} =\pPr(w_1),
		\end{align*}
		then, for each remaining symbol $w_j$ with $1<j\leq n$,
		denoting by $r_1\leq \dots\leq r_k$ (resp., $r_1< \dots< r_k$) 
the
		symbols in the bottom row of the tableau $\pPl(w_1\cdots w_{j-1})$
		(resp., $\pPr(w_1\cdots w_{j-1})$):
		\begin{itemize}
			\item if $r_k\leq w_j$ (resp., $r_k < w_j$), 
insert $w_j$ in a new
			column to the right of $r_k$ in $\pPl(w_1\cdots w_{j-1})$ (resp., $\pPr(
			w_1\cdots w_{j-1})$);

                      \item otherwise, if $m=\min\left\{i\in\{1,\ldots, k\}: 
w_j < r_i \right\}$,
                        (resp. $m=\min\left\{i\in\{1,\ldots, k\}:w_j\leq 
r_i\right\}$) construct a new empty box on top
                        of the $m$-th column of $\pPl(w_1\cdots w_{j-1})$ (resp. $\pPr(w_1\cdots w_{j-1})$). Then bump
                        all the symbols of the column containing $r_m$ to the box above and insert $w_j$ in the box
                        which has been cleared and previously contained the symbol $r_m$.
		\end{itemize}
		Output the resulting tableau.
	\end{enumerate}
\end{algorithm}

For brevity, we will use $\pP()$, whenever no distinction between
$\pPl()$ and $\pPr()$ is needed. Given $\smx\in \{\sml,\smr\}$ and words $u,v$ of
the same length (possibly over different totally ordered alphabets), we say that
$u$ and $v$ have \emph{equivalent $\smx$\ps\ insertions} if $\sh(\pP_\xps(u))=\sh(\pP_\xps(v)
)$ and for every $i=1,\ldots, |u|$ the $i$-th symbol of $u$ and the $i$-th symbol
of $v$ are in the same column-row position of the respective tableaux
$\pP_\xps(u)$ and $\pP_\xps(v)$. 

The following lemma is a restatement of
\cite[Lemma~3.4]{1706.06884} which relates the insertion of an arbitrary word with
its $x$-standardization.

\begin{lem}\label{lemma:equivalent_insertions}
	For any word $w$ over an alphabet $\bB$ and $\smx\in\{\sml,\smr\}$, the
	words $w$ and $\std_\smx(w)$ have equivalent $\smx$\ps\ insertions.
\end{lem}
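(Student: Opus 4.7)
The plan is to induct on $|w|$, treating the case $\smx = \sml$ in detail; the case $\smx = \smr$ is entirely parallel once the indexing is reversed. The base case $w = \varepsilon$ is immediate from step~(1) of Algorithm~\ref{alg:PSinsertion}. For the inductive step, write $w = w_1 \cdots w_n$ with $n \geq 1$, denote by $w'_j$ the $j$-th symbol of $\stdl(w)$ (so $w'_j$ has underlying symbol $w_j$ and index $|w_1 \cdots w_j|_{w_j}$), and assume by induction that $w_1 \cdots w_{n-1}$ and $w'_1 \cdots w'_{n-1}$ have equivalent lPS insertions. It then suffices to show that inserting $w_n$ into $\pPl(w_1 \cdots w_{n-1})$ and $w'_n$ into $\pPl(w'_1 \cdots w'_{n-1})$ triggers the same branch of Algorithm~\ref{alg:PSinsertion}: either both start a new rightmost column or both bump up exactly the same column $m$. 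Once this is known, since the bumping step is purely positional (it raises the symbols of column $m$ by one box and drops the new entry into the vacated box), the position-by-position correspondence given by the inductive hypothesis is preserved.

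The heart of the argument is a short comparison claim. Let $r$ be any symbol of $\pPl(w_1 \cdots w_{n-1})$ with underlying value $a$, and let $r'$ be its image in $\pPl(w'_1 \cdots w'_{n-1})$, which also has underlying value $a$ by the inductive hypothesis. I will prove:
\begin{enumerate}
\item[(i)] $a \leq w_n$ in $\bB$ iff $r' <_{\mathcal{C}(\bB)} w'_n$;
\item[(ii)] $w_n < a$ in $\bB$ iff $w'_n <_{\mathcal{C}(\bB)} r'$.
\end{enumerate}
Both parts follow from a case split on whether $a < w_n$, $a = w_n$, or $a > w_n$. If $a \neq w_n$, then the strict inequality on underlying symbols determines the comparison in $\mathcal{C}(\bB)$ regardless of indices. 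If $a = w_n$, then $r$ comes from some earlier position $w_j = w_n$ with $j < n$, and left-to-right standardization assigns $w'_j$ index $|w_1 \cdots w_j|_{w_j} < |w_1 \cdots w_n|_{w_j}$, the index of $w'_n$; hence $r' <_{\mathcal{C}(\bB)} w'_n$, matching the weak inequality in~(i) and falsifying the strict one in~(ii).

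Applying~(i) with $r = r_k$ shows that the new-column tests $r_k \leq w_n$ and $r'_k <_{\mathcal{C}(\bB)} w'_n$ agree (recalling that $\stdl(w)$ is standard, so $\leq$ and $<$ coincide on its bottom row). Applying~(ii) shows that the sets $\{i : w_n < r_i\}$ and $\{i : w'_n <_{\mathcal{C}(\bB)} r'_i\}$ coincide, so the minima defining the bumping column $m$ agree. This completes the induction for $\smx = \sml$. For $\smx = \smr$, the same strategy works provided one flips the indexing: when $a = w_n$, $\stdr$ now gives $r'$ a strictly larger index than $w'_n$, so $w'_n <_{\mathcal{C}(\bB)} r'$, which is exactly what is needed to align the rPS tests $r_k < w_n$ and $w_n \leq r_i$ with their strict counterparts on $\stdr(w)$. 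The only real obstacle is the careful book-keeping of these indices, and it is precisely this that forces the pairing of $\stdl$ with lPS and of $\stdr$ with rPS.
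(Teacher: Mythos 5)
Your proof is correct. Note that the paper does not actually prove this lemma here---it is stated as a restatement of Lemma~3.4 of \cite{1706.06884}---but your induction on $\abs{w}$, with the key observation that standardization breaks ties between equal underlying symbols in exactly the direction needed to convert the weak/strict tests of the $\sml$ (resp.\ $\smr$) branch of Algorithm~\ref{alg:PSinsertion} into the strict tests applied to the standard word, is essentially the argument given in that reference, and your case analysis (i)--(ii) together with the positional nature of the bumping step closes the inductive step completely.
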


There are also corresponding \emph{standardization and
de-standardization} processes for \ps\ tableaux. Considering a \ps\ tableau
$R$, $\Std_{\sml}(R)$ denotes the \lps\ tableau obtained from $R$, by reading
its entries column by column, from left to right, and on each column from top to
bottom, and attaching to each symbol $a\in\cont(R)$ an index $i\in\aA$ to the 
$i$-th
appearance of $a$. 
Moreover, $\Std_{\smr}(R)$ denotes the \rps\ tableau obtained from $R$,
by reading its entries column by column, from right to left, and on each column
from bottom to top, attaching to each symbol $a\in\cont(R)$ an index $i\in \aA$ 
to the
$i$-th appearance of $a$. 

On the other hand, if the considered \ps\ tableau $R$ has symbols from
$\mathcal{C}(\bB)$, for some alphabet $\bB$, then the \emph{de-standardization of
$R$}, denoted by $\Dstd(R)$,
is the tableau produced by erasing the indexes of each of its underlying
symbols.

A direct consequence of the lemma is the following:
\begin{remark}\label{rmk:std_insertion}
If $R=\pP_\xps(w)$, for some word $w$, 
then $\pP_\xps(\std_\smx(w))=\Std_\smx(R)$.
\end{remark}

 We prove the following result.

\begin{lem}\label{rmk:std_surjection}
	For $\smx\in\{\sml,\smr\}$, let $R$ be a $\smx$PS tableau over an 
alphabet $\bB$ and let $\Std_\smx(R)$ be the corresponding standardized 
$\smx$PS tableau over the alphabet $\mathcal{C}(\bB)$. Any
	word that inserts to $\Std_\smx(R)$ using 
Algorithm~\ref{alg:PSinsertion} has the form $\std_\smx(w)$, for
	some word $w$ over $\bB$. 
\end{lem}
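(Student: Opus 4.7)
The plan is to prove the statement by induction on $|v|$, using a reduction that depends on the choice of $\smx\in\{\sml,\smr\}$: for $\sml$ I peel off the \emph{last} symbol of $v$, and for $\smr$ the \emph{first}. This asymmetry is natural because $\Std_\sml$ reads the tableau left-to-right, top-to-bottom, so the last entry read sits at the bottom of the rightmost column (where a last-inserted symbol may end up under Algorithm~\ref{alg:PSinsertion}), while $\Std_\smr$ reads right-to-left, bottom-to-top, so the last entry read is the top of column~$1$---precisely where the first-inserted symbol always ends up, since it is placed in column~$1$ and every subsequent bump into that column pushes it upward.

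For the $\sml$ case I would write $v=v'\cdot c$ with $c=a_j$. Algorithm~\ref{alg:PSinsertion} places $c$ at the bottom of some column $m$ of $T=\pPl(v)=\Std_\sml(R)$, either as a new rightmost column of height~$1$ or via a bump. The first key step is to show $c=a_{\abs{R}_a}$, which amounts to ruling out any occurrence of $a$ in a column of $R$ strictly to the right of $m$: this is trivial in the new-column case, while in the bump case it follows by contradiction---an $a$ in column $m'>m$ of $R$ would, by the weakly-increasing bottom row of $R$, force $a$ at the bottom of every column from $m$ through $m'$, but then the entry just above $c$ in column $m$ of $T$ (which becomes the bottom-row entry at position $m$ of $T'=\pPl(v')$) has underlying letter strictly greater than $a$, violating the weakly-increasing condition on $T'$'s bottom row in $\mathcal{C}(\bB)$. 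Next I would let $R'$ be $R$ with the bottom entry of column $m$ removed, check that $R'$ is a valid $\sml$PS tableau (the bottom-row check follows from the $\mathcal{C}(\bB)$-validity of $T'$ used above), and verify $T'=\Std_\sml(R')$ by matching indexes; the inductive hypothesis then gives $v'=\std_\sml(\Dstd(v'))$, and since $c=a_{\abs{R}_a}$ is exactly what $\std_\sml$ appends for the newly-rightmost occurrence of $a$ in $\Dstd(v)$, the conclusion $v=\std_\sml(\Dstd(v))$ follows.

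For the $\smr$ case I would write $v=v_1\cdot v''$ and use that $\pPr(v'')$ equals $T$ with the top entry of column~$1$ deleted: once $v_1$ reaches the top of column~$1$ no later comparison ever reads it (all bumping decisions consult only the bottom row), so column~$1$ of $\pPr(v'')$ is column~$1$ of $T$ with that top entry stripped. Since the top of column~$1$ is the last position visited by $\Std_\smr$'s reading order, $v_1=a_{\abs{R}_a}$. Taking $R''$ to be $R$ with this top entry removed yields a valid $\smr$PS tableau with $T\setminus\{v_1\}=\Std_\smr(R'')$; applying the inductive hypothesis to $v''$ and noting that $\std_\smr$ assigns index $\abs{R}_a$ to the leftmost $a$ in $\Dstd(v)$ closes the induction.

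I expect the main obstacle to be the bump subcase in the $\sml$ argument, where ruling out stray $a$'s to the right of column $m$ requires carefully combining the $\bB$-order structure of $R$ with the $\mathcal{C}(\bB)$-order structure of the intermediate tableau $T'$; the $\smr$ case is technically cleaner, since the first-symbol reduction involves no case split and the residual tableau admits an immediate description.
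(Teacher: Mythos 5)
Your $\sml$ branch takes a genuinely different route from the paper's: the paper gives a short, non-inductive argument (if two copies $a_i,a_j$ of the same underlying letter with $i>j$ occurred in the wrong order in the inserting word, the later-inserted $a_j$ would have to land in the same column below $a_i$ or in a column to its right, contradicting the left-to-right, top-to-bottom reading order that defines $\Std_{\sml}$), whereas you run an induction on the length of the word via a reverse-insertion step. As outlined, your $\sml$ reduction is workable, though heavier. The problem is the $\smr$ branch.

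The claim that $\pPr(v'')$ equals $T$ with the top entry of column $1$ deleted is false. Your justification only covers the time \emph{after} $v_1$ has been bumped off the bottom row; while $v_1$ is still the bottom entry of column $1$ it does influence where later symbols go. Concretely, take $\bB=\aA_3$ and $v=2_1\,3_1\,1_1=\std_\smr(231)$. Then $T=\pPr(v)$ has two columns, the first with $1_1$ at the bottom and $2_1$ on top, the second consisting of the single box $3_1$; moreover $T=\Std_{\smr}(\Dstd(T))$, so this is an admissible instance of the lemma. Deleting the top entry $2_1$ of column $1$ leaves two one-box columns $1_1$ and $3_1$. But $v''=3_1\,1_1$ inserts under $\pPr$ to a \emph{single} column with $1_1$ at the bottom and $3_1$ on top, because with the $2_1$ gone the symbol $1_1$ now bumps the column containing $3_1$. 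So the tableau to which you want to apply the inductive hypothesis is not $\pPr(v'')$, and the induction does not close. (In this example $\pPr(v'')$ happens again to be of the form $\Std_{\smr}(\cdot)$, so one could try to apply the hypothesis to $\pPr(v'')$ itself rather than to $T$ minus a box, but that is a different argument and you would still have to prove that $\pPr(v'')$ is always the standardization of an $\smr$PS tableau.) The cheapest repair is to drop the first-symbol reduction for $\smr$ and argue symmetrically to the paper's $\sml$ proof: out-of-order indices of a common letter force a placement incompatible with the right-to-left, bottom-to-top reading order defining $\Std_{\smr}$.
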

\begin{proof}
	Since the \rps\ case follows with a similar argument, we only prove the 
\lps\ case.
It is clear that if $z$ is a word over $\mathcal{C}(\bB)$ which inserts to 
$\Std_{\sml}(R)$ using 
Algorithm~\ref{alg:PSinsertion}, then there exits a word $w$ over  $\bB$, where 
$w$ is obtained from $z$ removing the indexes of all symbols and 
$\cont(z)=\cont(\std_{\sml}(w))$.

Suppose 
that there are symbols
	$a_i,a_j$ with $i>j$ such that $a_i$ occurs to the left of $a_j$ in 
$z$. Then,
	using Algorithm~\ref{alg:PSinsertion}, the symbol $a_j$ is going to 
appear in $\Std_{\sml}(R)$
	either in the same column of $a_i$ below it, or in a column to the 
right. This contradicts the process of \lps\ standardization of a tableaux
	previously described, which follows on the columns of $R$ from left to 
right and top to bottom. It follows that  $a_j$ occurs in a column to the left 
where $a_i$ occurs.
	Therefore, the only words that can insert to $\Std_\smx(R)$ have the 
required form.
\end{proof}

The following result is a restatement of \cite[Proposition~3.5]{1706.06884}.

\begin{prop}\label{prop:std_and_Dstd}
	For any word $w$ over an alphabet $\bB$ and $\smx\in\{\sml,\smr\}$, we have
	\begin{enumerate}
		\item $\pP_\xps(\std_\smx(w))=\Std_\xps(\pP_\xps(w))$; and
		\item $\Dstd\big(\pP_\xps\big(\std_\smx(w)\big)\big)=\pP_\xps(w)$.
	\end{enumerate}
\end{prop}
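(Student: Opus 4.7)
The plan is to derive (1) directly from Lemma~\ref{lemma:equivalent_insertions} and then obtain (2) by applying de-standardization to both sides of (1). Lemma~\ref{lemma:equivalent_insertions} asserts that $w$ and $\std_\smx(w)$ have equivalent $\smx$PS insertions; in particular, $\pP_\xps(\std_\smx(w))$ and $\pP_\xps(w)$ have the same shape, and at every column-row position the underlying symbols agree. Thus $\pP_\xps(\std_\smx(w))$ is already an indexed refinement of $\pP_\xps(w)$, and all that remains for (1) is to check that the indices attached coincide with those produced by $\Std_\xps$.

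For the index-check I fix an underlying symbol $a\in\bB$ and enumerate its occurrences in $w$ in the reading order used by $\std_\smx$, that is, left-to-right if $\smx=\sml$ and right-to-left if $\smx=\smr$, so that the $k$-th occurrence receives the index $k$. By Lemma~\ref{lemma:equivalent_insertions} this $k$-th occurrence lands in the same column-row position that the $k$-th inserted $a$ occupies in $\pP_\xps(w)$, so the task reduces to showing that if the positions of $\pP_\xps(w)$ containing $a$ are listed in the $\Std_\xps$ reading order, the resulting sequence agrees with the insertion order. For $\smx=\sml$, columns are strictly decreasing top-to-bottom and so distinct $a$'s sit in distinct columns; an inspection of Algorithm~\ref{alg:PSinsertion} combined with the observation that bumping only decreases the bottom symbol of a column shows that each successive $a$ must be placed in a strictly rightward column, matching the column-first reading order of $\Std_\sml$. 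For $\smx=\smr$, a parallel analysis shows that successive $a$'s either land in strictly rightward columns or are stacked in the same column with the later insertion pushed to a strictly lower row, and both outcomes are read later in the right-to-left, bottom-to-top order of $\Std_\smr$.

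Once (1) is established, (2) follows immediately: applying $\Dstd$ to both sides of (1) gives $\Dstd(\pP_\xps(\std_\smx(w))) = \Dstd(\Std_\xps(\pP_\xps(w))) = \pP_\xps(w)$, where the last equality is just the observation that $\Dstd$ strips off the indices that $\Std_\xps$ attaches. The principal technical obstacle lies in the index-check inside (1): one must systematically verify that Algorithm~\ref{alg:PSinsertion}'s bumping step handles equal underlying symbols in a way that respects the tableau reading order of $\Std_\xps$, across all possible bumping configurations, and the closely related order-preservation argument appearing in the proof of Lemma~\ref{rmk:std_surjection} provides a natural template.
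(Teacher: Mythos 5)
Your proposal is correct and follows essentially the same route as the paper, which does not reprove this statement but imports it from an earlier work and already records part~(1) as a ``direct consequence'' of Lemma~\ref{lemma:equivalent_insertions} (see Remark~\ref{rmk:std_insertion}); your contribution is to make explicit the index-matching check that the paper leaves implicit, and part~(2) does indeed reduce to $\Dstd\circ\Std_\xps=\mathrm{id}$. One wording slip to fix in the \rps\ case: a later-inserted occurrence of $a$ lands in a column strictly to the right, or strictly lower in the same column, and is therefore read \emph{earlier} (not later) by $\Std_\smr$'s right-to-left, bottom-to-top order; this is still consistent because $\std_\smr$ likewise assigns index $1$ to the \emph{last} occurrence of $a$ in $w$, so the two right-to-left conventions cancel, but as written the sentence would suggest the indices disagree.
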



\section{A Robinson--Schensted--Knuth-like correspondence}
\label{sec:rsklike}

In its original formulation, the Robinson correspondence is a bijection
that maps standard words from $\mathfrak{S}(\aA_n)$ to pairs of standard Young
tableaux of the same shape with content $\aA_n$. For each
standard word $\sigma$ of $\mathfrak{S}(\aA_n)$, the first component of the
pair of tableaux is obtained by applying Schensted's insertion
algorithm to $\sigma$, whereas the second is the tableaux that
records the position of each symbol inserted into the first tableaux.
The Robinson--Schensted correspondence is the generalization of this
correspondence to words. Both can be seen as particular cases of the
Robinson--Schensted--Knuth correspondence, which is the bijection obtained by
considering two-rowed arrays in lexicographic order as starting point and
taking as images pairs of semistandard Young tableaux, such that the first
tableau is obtained from the Schensted insertion applied to the second
component of the two-rowed array and the second tableau is obtained from
recording the steps of this insertion according to the first word of the 
two-rowed array
(see \cite{MR1464693} for more details concerning these constructions).

In this section we introduce Robinson--Schensted--Knuth-like correspondences for
both \lps\ and \rps\ tableaux. These correspondences will map two-rowed arrays
over a totally ordered alphabet to pairs of \ps\ tableaux having the same
shape and avoiding certain pairs of patterns. Similarly to what happens with the
original correspondence, the first element of the pair will be a \ps\
tableau coming from the \ps\ insertion of the second word of the
two-rowed array, and the second element, the tableau which records 
this insertion according to the first word of the two-rowed array. To prove this 
result
we will make use of \cite[Theorem~3.9]{BL2005} in which the authors establish a
Robinson-like correspondence between standard words from $\mathfrak{S}(\aA_n)$ and
pairs of standard Patience Sorting tableaux over $\aA_n$ having the same
shape and avoiding certain pairs of patterns.

%
%


\subsection{A Robinson--Schensted-like correspondence}

We start by presenting a Robinson--Schensted-like correspondence. In order to
do so, first we provide an algorithm which is the adaptation of
\cite[Algorithm~3.1]{BL2005} to words over a totally ordered alphabet.

\begin{algorithm}[Extended \ps\ insertion of a word]\label{alg:ExtendedPS}
	~\par\nobreak
	\textit{Input:} A word $w$ over an alphabet.

	\textit{Output:} A pair of tableaux $(\pPl(w),\qQl(w))$ (resp.\ $(\pPr(w),
	\qQr(w))$) of the same shape.

	\textit{Method:}
	\begin{enumerate}
		\item If $w=\varepsilon$, output a pair of empty tableaux $(\emptyset,
		\emptyset)$. Otherwise:
		\item $w=w_1\cdots w_n$, with symbols $w_1,\ldots,w_n$ of the 
alphabet.
		Let
		\begin{align*}
		\big(\pPl(w_1),\qQl(w_1)\big)=\left(\ytableausetup
		{boxsize=1.1em, aligntableaux=center}\begin{ytableau}
		w_1 \end{ytableau},\begin{ytableau}
		1 \end{ytableau}\right) =\big(\pPr(w_1),\qQr(w_1)\big),
		\end{align*}
		and for each remaining symbol $w_j$ with $1<j\leq n$,
		denote by $r_1\leq \dots\leq r_k$ (resp.\ $r_1< \dots< r_k$) 
the
		symbols in the bottom row of the tableau $\pPl(w_1\cdots w_{j-1})$
		(resp.\ $\pPr(w_1\cdots w_{j-1})$). Then:
		\begin{itemize}
                \item if $r_k\leq w_j$ (resp. $r_k < w_j$), simultaneously 
attach new boxes, one to  $\pPl(w_1\cdots 
w_{j-1})$
                (resp.\ $\pPr( w_1\cdots w_{j-1})$) and another to 
$\qQl(w_1\cdots w_{j-1})$ (resp.\ $\qQr(w_1\cdots
                w_{j-1})$), at the right of the bottom row, and fill the 
first with the symbol $w_j$ and the second  with the symbol $j$;

                \item otherwise, if $m=\min\{i\in\{1,\ldots, k\}: w_j<
r_i\}$,
                (resp.\ $m=\min\{i\in\{1,\ldots, k\}: w_j\leq r_i\}$)
                simultaneously attach  one new  box on the top of each of the
                $m$-th columns of $\pPl(w_1\cdots w_{j-1})$ (resp.\ $\pPr(
                w_1\cdots w_{j-1})$) and $\qQl(w_1 \cdots w_{j-1})$ (resp.\ 
                $\qQr(w_1\cdots w_{j-1})$). Then, insert $j$ in the new box of
                $\qQl(w_1\cdots w_{j-1})$ (resp.\ $\qQr(w_1 \cdots w_{j-1})$)
                and in $\pPl(w_1\cdots w_{j-1})$ (resp.\ $\pPr(w_1\cdots 
                w_{j-1})$) bump all the symbols of the column containing $r_m$
                to the box above and insert $w_j$ in the box which has been
                cleared and previously contained the symbol $r_m$.
		\end{itemize}
		Output the resulting pair of tableaux.
	\end{enumerate}
\end{algorithm}


For brevity, whenever no distinction is needed, $(\pP(w),\qQ(w))$ will denote
one of the pairs of tableaux $(\pPl(w),\qQl(w))$ or $(\pPr(w),\qQr(w))$, 
obtained from the insertion of a word $w$ under
the previous algorithm. Note that the symbols of the \ps\ tableau $\pP(w)$
obtained from the insertion of $w$ under the previous algorithm are precisely
the symbols occurring in $w$, which is taken over an arbitrary alphabet,
whereas the symbols occurring in $\qQ(w)$ are precisely the symbols from
$\aA_{\abs{w}}$. Moreover, $P(w)$ coincides with the \ps\ tableau obtained
from the insertion of $w$ under Algorithm~\ref{alg:PSinsertion} and $\qQ(w)$
is always a recording tableau. The difference between this algorithm and
Algorithm~3.2 of \cite{1706.06884} is that we are simultaneously inserting
symbols in these tableaux and recording the construction of each new box in
$\pP(w)$ using the recording tableau $\qQ(w)$.

Guided by the example of the plactic monoid, our goal is now to show that starting with a pair of tableaux $(P,Q)$ of
the same shape, where $P$ is a \ps\ tableau and $Q$ a recording tableau with content $\aA_n$ (where $n$ is the number of
entries in $P$ or $Q$), we are able to associate a unique word whose insertion under Algorithm~\ref{alg:ExtendedPS} yields
$(P,Q)$.

First, we note that for any pair of tableaux of the form $\big(\pP(w),
\qQ(w)\big)$, we will be able to trace back the word $w$, whose
insertion under Algorithm~\ref{alg:ExtendedPS} led to this pair of tableaux.

The insertion of a given word $w=w_1\cdots w_n$ under
Algorithm~\ref{alg:PSinsertion} is done by inserting each of its symbols, from
left to right in the previously obtained tableaux (starting with the empty
tableaux $\emptyset$). Thus, using the notation
from \cite[after Algorithm~3.2]{1706.06884}, it makes sense to denote the
insertion of $w$ in the following way:
\begin{equation*}
\pP(w)=\pP(w_1w_2\cdots w_n)=\big((\emptyset\leftarrow w_1)\leftarrow w_2\big)
\leftarrow\ldots\leftarrow w_n.
\end{equation*}
Since in the case of Algorithm~\ref{alg:ExtendedPS}, we are simultaneously
making two insertions, it makes sense to extend this
notation to the following:
\begin{equation*}
\big(\pP(w),\qQ(w)\big)=\Big(\big((\emptyset,\emptyset)\leftarrow (w_1,1)\big)
\leftarrow(w_2,2)\Big)\leftarrow\ldots\leftarrow (w_n,n).
\end{equation*}
With this notation, the steps of the extended \lps\ insertion of the word $w=4 6 
2 3 2 1 4$ are
shown in Figure \ref{figure:extended_insertion}.
\begin{figure}[t]
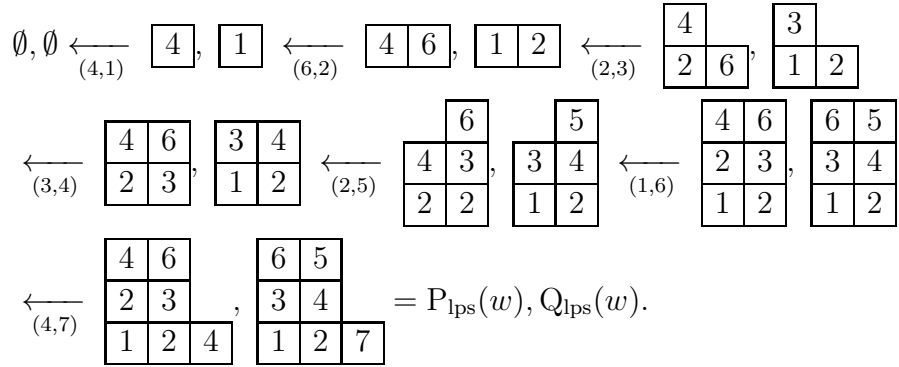

	\centering
	\begin{align*}
	&\emptyset,\emptyset \xleftarrow[(4,1)]{\ \ \ }\
	\ytableausetup
	{mathmode, boxsize=1.3em, aligntableaux=center}
	\begin{ytableau}
	4
	\end{ytableau},\
	\begin{ytableau}
	1
	\end{ytableau}\
	\xleftarrow[(6,2)]{\ \ \ }\
	\begin{ytableau}
	4 & 6
	\end{ytableau},\
	\begin{ytableau}
	1 & 2
	\end{ytableau}\
	\xleftarrow[(2,3)]{\ \ \ }\
	\begin{ytableau}
	4 & \none\\
	2 & 6
	\end{ytableau},\
	\begin{ytableau}
	3 & \none\\
	1 & 2
	\end{ytableau}\\
	&\xleftarrow[(3,4)]{\ \ \ }\
	\begin{ytableau}
	4 & 6\\
	2 & 3
	\end{ytableau},\
	\begin{ytableau}
	3 & 4\\
	1 & 2
	\end{ytableau}\
	\xleftarrow[(2,5)]{\ \ \ }\
	\begin{ytableau}
	\none & 6 \\
	4 & 3 \\
	2 & 2
	\end{ytableau},\
	\begin{ytableau}
	\none & 5 \\
	3 & 4 \\
	1 & 2
	\end{ytableau}\
	\xleftarrow[(1,6)]{\ \ \ }\
	\begin{ytableau}
	4 & 6 \\
	2 & 3 \\
	1 & 2
	\end{ytableau},\
	\begin{ytableau}
	6 & 5 \\
	3 & 4 \\
	1 & 2
	\end{ytableau}\\
	&\xleftarrow[(4,7)]{\ \ \ }\
	\begin{ytableau}
	4 & 6 & \none\\
	2 & 3 & \none\\
	1 & 2 & 4
	\end{ytableau},\
	\begin{ytableau}
	6 & 5 & \none\\
	3 & 4 & \none\\
	1 & 2 & 7
	\end{ytableau}
	=\pPl(w),\qQl(w).\
	\end{align*}
	\caption{Extended \lps\ insertion of the word $w=4 6 2 3 2 1 4$,
		where the first coordinate of the pair below the arrow indicates
		the symbol being inserted and the second coordinate the step at which
		this symbol is being inserted.}
	\label{figure:extended_insertion}
\end{figure}

Note that each time we insert a symbol into the bottom row of $\pP(w)$ using
extended \ps\ insertion, a symbol recording this insertion is simultaneously
inserted on top of the corresponding column in $\qQ(w)$.
Denote by $\qQ'(w)$ the tableau obtained by reversing the
columns of $\qQ(w)$. Reading the entries of $\pP(w)$ according to the order 
determined by
$\qQ'(w)$ allows us to get the word we started with.
From Figure \ref{figure:extended_insertion}, we know that if
$w=4 6 2 3 2 1 4$, then
\begin{equation*}
\ytableausetup
{mathmode, boxsize=1.3em, aligntableaux=center}
\pPl(w)=\begin{ytableau}
4 & 6 & \none\\
2 & 3 & \none\\
1 & 2 & 4
\end{ytableau}\ \ \text{ and }\ \
\qQ'_\bell(w)=\begin{ytableau}
1 & 2 & \none\\
3 & 4 & \none\\
6 & 5 & 7
\end{ytableau}.
\end{equation*}
Figure \ref{figure:reading_tableaux} describes the process of reading
$\pP_\bell(w)$ according to $\qQ_\bell'(w)$.

\begin{figure}[t]
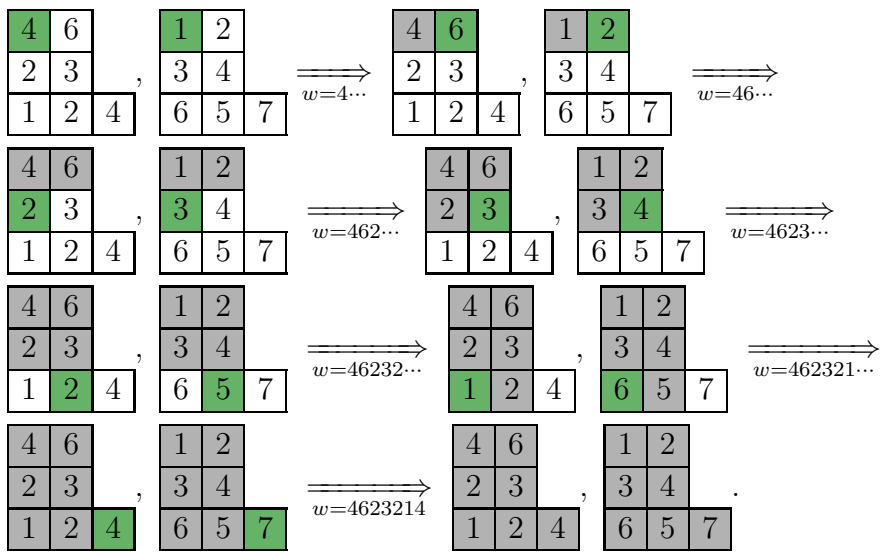

	\centering
	\begin{align*}
	&\begin{ytableau}
	*(Green!60!white)4 & 6 & \none\\
	2 & 3 & \none\\
	1 & 2 & 4
	\end{ytableau},\
	\begin{ytableau}
	*(Green!60!white)1 & 2 & \none\\
	3 & 4 & \none\\
	6 & 5 & 7
	\end{ytableau} \xRightarrow[w=4\cdots]{\ \ \ }\
	\ytableausetup
	{mathmode, boxsize=1.3em, aligntableaux=center}
	\begin{ytableau}
	*(Grey!60!white)4 & *(Green!60!white)6 & \none\\
	2 & 3 & \none\\
	1 & 2 & 4
	\end{ytableau},\
	\begin{ytableau}
	*(Grey!60!white)1 & *(Green!60!white)2 & \none\\
	3 & 4 & \none\\
	6 & 5 & 7
	\end{ytableau}\
	\xRightarrow[w=4 6\cdots]{\ \ \ }\\
	&\begin{ytableau}
	*(Grey!60!white)4 & *(Grey!60!white)6 & \none\\
	*(Green!60!white)2 & 3 & \none\\
	1 & 2 & 4
	\end{ytableau},\
	\begin{ytableau}
	*(Grey!60!white)1 & *(Grey!60!white)2 & \none\\
	*(Green!60!white)3 & 4 & \none\\
	6 & 5 & 7
	\end{ytableau}\
	\xRightarrow[w=4 6 2\cdots]{\ \ \ }\
	\begin{ytableau}
	*(Grey!60!white)4 & *(Grey!60!white)6 & \none\\
	*(Grey!60!white)2 & *(Green!60!white)3 & \none\\
	1 & 2 & 4
	\end{ytableau},\
	\begin{ytableau}
	*(Grey!60!white)1 & *(Grey!60!white)2 & \none\\
	*(Grey!60!white)3 & *(Green!60!white)4 & \none\\
	6 & 5 & 7
	\end{ytableau}\
	\xRightarrow[w=4 6 2 3\cdots]{\ \ \ }\\
	&\begin{ytableau}
	*(Grey!60!white)4 & *(Grey!60!white)6 & \none\\
	*(Grey!60!white)2 & *(Grey!60!white)3 & \none\\
	1 & *(Green!60!white)2 & 4
	\end{ytableau},\
	\begin{ytableau}
	*(Grey!60!white)1 & *(Grey!60!white)2 & \none\\
	*(Grey!60!white)3 & *(Grey!60!white)4 & \none\\
	6 & *(Green!60!white)5 & 7
	\end{ytableau}\
	\xRightarrow[w=4 6 2 3 2\cdots]{\ \ \ }\
	\begin{ytableau}
	*(Grey!60!white)4 & *(Grey!60!white)6 & \none\\
	*(Grey!60!white)2 & *(Grey!60!white)3 & \none\\
	*(Green!60!white)1 & *(Grey!60!white)2 & 4
	\end{ytableau},\
	\begin{ytableau}
	*(Grey!60!white)1 & *(Grey!60!white)2 & \none\\
	*(Grey!60!white)3 & *(Grey!60!white)4 & \none\\
	*(Green!60!white)6 & *(Grey!60!white)5 & 7
	\end{ytableau}\
	\xRightarrow[w=4 6 2 3 2 1\cdots]{\ \ \ }\\
	&\begin{ytableau}
	*(Grey!60!white)4 & *(Grey!60!white)6 \\
	*(Grey!60!white)2 & *(Grey!60!white)3 \\
	*(Grey!60!white)1 & *(Grey!60!white)2 & *(Green!60!white)4
	\end{ytableau},\
	\begin{ytableau}
	*(Grey!60!white)1 & *(Grey!60!white)2 \\
	*(Grey!60!white)3 & *(Grey!60!white)4 \\
	*(Grey!60!white)6 & *(Grey!60!white)5 & *(Green!60!white)7
	\end{ytableau}\
	\xRightarrow[w=4 6 2 3 2 1 4]{\ \ \ }\
	\begin{ytableau}
	*(Grey!60!white)4 & *(Grey!60!white)6 & \none\\
	*(Grey!60!white)2 & *(Grey!60!white)3 & \none\\
	*(Grey!60!white)1 & *(Grey!60!white)2 & *(Grey!60!white)4
	\end{ytableau},\
	\begin{ytableau}
	*(Grey!60!white)1 & *(Grey!60!white)2 & \none\\
	*(Grey!60!white)3 & *(Grey!60!white)4 & \none\\
	*(Grey!60!white)6 & *(Grey!60!white)5 & *(Grey!60!white)7
	\end{ytableau}
	.\
	\end{align*}
	\caption{Process of obtaining $w=4623214$ from reading $\pPl(4623214)$
		according to the order given by $\qQ'_\bell(4623214)$.}
	\label{figure:reading_tableaux}
\end{figure}



The \emph{column reading}, denoted $\cC(R )$, of a tableau $R$ is the word 
obtained by
proceeding through the columns, from leftmost to rightmost, and reading each 
column from top to bottom. For example, the column reading of the tableau 
\begin{equation*}
\ytableausetup
{mathmode, boxsize=1.3em, aligntableaux=center}
\begin{ytableau}
4 & 6 & \none\\
2 & 3 & \none\\
1 & 2 & 4
\end{ytableau}
\end{equation*}
is the word $v=421\, 632\,4$. Note that
\begin{equation*}
\ytableausetup
{mathmode, boxsize=1.3em, aligntableaux=center}
\begin{ytableau}
3 & 6 & \none\\
2 & 5 & \none\\
1 & 4 & 7
\end{ytableau}.
\end{equation*}
is the recording tableau $\qQ_\bell(v)$ of the word $v$.

However, it is not guaranteed that we can start from an arbitrary pair of \ps\ tableaux $(R,S)$ with the same
shape, where $R$ is semistandard and $S$ is a recording tableaux,
read $R$ according to $S$, and obtain a word that inserts to
$(R,S)$ under Algorithm~\ref{alg:ExtendedPS}. For instance, considering the pair
of \lps\ tableaux
\begin{equation*}
\ytableausetup
{mathmode, boxsize=1.1em, aligntableaux=center}
\big(R,S\big)=\left(\begin{ytableau}
3 & \none\\
2 & \none\\
1 & 1
\end{ytableau},\
\begin{ytableau}
4 & \none\\
3 & \none\\
1 & 2
\end{ytableau}\right)
\end{equation*}
then reading $R$ according to $S'$ leads to the word $w=3121$.
Using extended \lps\ insertion, this word inserts to the pair
\begin{equation*}
\Big(\pPl(w),\qQl(w)\Big)=\left(\begin{ytableau}
3 & 2\\
1 & 1
\end{ytableau},\
\begin{ytableau}
2 & 4\\
1 & 3
\end{ytableau}\right)\neq\left(\begin{ytableau}
3 & \none\\
2 & \none\\
1 & 1
\end{ytableau},\
\begin{ytableau}
4 & \none\\
3 & \none\\
1 & 2
\end{ytableau}\right).
\end{equation*}
With another simple example we can conclude that the same problem can occur
when considering  pairs of \rps\ tableaux.

Within the setting of standard words
and pairs of standard \ps\ tableaux, this problem led
the authors of \cite{BL2005} to restrict the pairs of standard \ps\
tableaux that can be considered by introducing  the concept
of \emph{stable pairs set}.

In order to introduce this notion, first
we need the concept of pattern avoidance, which is also defined in \cite{BL2005}.
So, given standard words $u$ and $v$ with $\abs{v}=m
\leq n=\abs{u}$, we say that $u$ \emph{contains the pattern} $v$, if there exists
a subword $u_{i_1}\cdots u_{i_m}$ of $u$ of length $m$ that is order isomorphic
to $v$. Otherwise, we say that $u$ \emph{avoids the pattern} $v$.

Note that in the above  definition, the symbols $u_{i_1},\ldots, u_{i_m}$
are not required to be contiguous. However, the definition that we will adopt
henceforth is that, unless a dash is inserted in $v$ indicating which of the symbols $u_{i_1}, \ldots, u_{i_m}$ are not required to be contiguous, they have to be contiguous. 


For example 
\begin{itemize}
	\item the standard word $\sigma=3142$ contains exactly one occurrence of a
	$2$-$31$ pattern given by the subword $u= 342$;
	\item the standard word $\stdl(2312)=2_1 3_1 1_1 2_2$ contains exactly one
	occurrence of a $2$-$13$ pattern given by the subword $u = 2_1 1_1
	2_2$.
\end{itemize}

The \emph{standard stable pairs set over an alphabet}, is the set composed by
the pairs of \ps\ tableaux $(R,S)$ of the same shape,
such that $R$ is a standard tableaux over that alphabet, $S$ is a recording
tableau and the pair of column readings $(\cC(R), \cC(S'))$ avoids simultaneous
occurrences of the pairs of patterns $(31\text{-}2,13\text{-}2)$, $(31\text{-}2,
23\text{-}1)$ and $(32 \text{-}1,13\text{-}2)$ at the same positions of $\cC(R)$
and $\cC(S')$.



Within this setting, using Algorithm~\ref{alg:ExtendedPS}, Theorem~3.9 of
\cite{BL2005} can be restated in the following way

\begin{prop}\label{prop:std_rob_schensted}
	There is a one-to-one correspondence between the set of
	standard words over an alphabet and the standard stable pairs set over
	that alphabet.
	The one-to-one correspondence is given under the mapping
	\[ w\mapsto (\pP(w),\qQ (w)).\]
\end{prop}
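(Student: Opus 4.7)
The plan is to deduce this from \cite[Theorem~3.9]{BL2005}, which establishes the analogous bijection for standard words in $\mathfrak{S}(\aA_n)$. The extension to standard words over an arbitrary finite totally ordered alphabet $\bB$ is automatic, since the unique order-preserving bijection $\bB \to \aA_{\abs{\bB}}$ intertwines Algorithm~\ref{alg:ExtendedPS} and relabels tableaux in a shape-preserving way. Moreover, on a standard word $w$ every intermediate tableau produced by Algorithm~\ref{alg:ExtendedPS} is itself standard, so its bottom row has pairwise distinct entries; consequently the lPS and rPS branches of the algorithm coincide and we may write $\pP(w),\qQ(w)$ without ambiguity, reducing both versions of the claim to a single statement.

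Next I would verify that $w \mapsto (\pP(w), \qQ(w))$ lands in the standard stable pairs set. The two tableaux have the same shape because boxes are added to them in lock-step. Since the symbols of $w$ are distinct, $\pP(w)$ is a standard \ps\ tableau with content $\cont(w)$; by construction, $\qQ(w)$ is a recording tableau with content $\aA_{\abs{w}}$. The key claim -- that the pair $(\cC(\pP(w)), \cC(\qQ(w)'))$ avoids simultaneous occurrences of the three dashed pattern pairs $(31\text{-}2, 13\text{-}2)$, $(31\text{-}2, 23\text{-}1)$ and $(32\text{-}1, 13\text{-}2)$ -- is precisely the forbidden-pattern description supplied by \cite[Theorem~3.9]{BL2005}.

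For bijectivity, the inverse map is the reading procedure illustrated in Figures~\ref{figure:extended_insertion}--\ref{figure:reading_tableaux}: given $(R, S)$, one reads the entries of $R$ in the order in which $1, 2, \ldots, \abs{R}$ appear in $S'$. Applied to $(\pP(w), \qQ(w))$ this reading returns $w$, because at step $j$ of Algorithm~\ref{alg:ExtendedPS} the symbol $w_j$ is placed in the unique cell of $\pP(w)$ whose corresponding cell of $\qQ(w)$ carries the label $j$, and the column reversal turns ``top cell on column $c$'' into ``cell accessed at step $j$ while reading through $\qQ'(w)$''. Injectivity follows. For surjectivity I would take $(R, S)$ in the standard stable pairs set, form the reading word $w_{R,S}$, and verify $(\pP(w_{R,S}), \qQ(w_{R,S})) = (R, S)$; this is exactly the inverse direction of \cite[Theorem~3.9]{BL2005}.

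The main obstacle is a dictionary check rather than a new combinatorial argument: one must confirm that Algorithm~\ref{alg:ExtendedPS} (which grows columns upward and records insertions by the affected column) coincides on standard words with the algorithm used in \cite{BL2005}, and that the ``reverse columns, then column-read'' recipe for $\qQ(w)$ here produces the same sequence of indices against which the forbidden patterns of \cite[Theorem~3.9]{BL2005} are tested. Once these conventions are aligned, the proposition is that theorem rephrased in the present notation.
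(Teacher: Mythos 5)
Your proposal matches the paper's treatment: the paper gives no independent proof, presenting this proposition simply as a restatement of \cite[Theorem~3.9]{BL2005} in the notation of Algorithm~\ref{alg:ExtendedPS}. The additional verifications you supply (the order-preserving relabelling to $\aA_n$, the coincidence of the lPS and rPS branches on standard words, and the dictionary check between the two algorithms) are exactly the routine details the paper leaves implicit, so your argument is correct and essentially identical in approach.
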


 Since we aim to present a Robinson--Schensted-like correspondence, it is
natural to consider the following generalization of the standard stable set
previously introduced. For $\smx\in\{\sml,\smr\}$, the $\smx$\ps\ \emph{stable pairs
set over an alphabet $\bB$} is the set composed by the pairs of $\smx$\ps\ tableaux
$(R,S)$ such that $\big(\Std_{\smx}\big(R\big),S\big)$ is a standard stable 
pair over $\mathcal{C}(\bB)$.


We are now able to introduce the Robinson--Schensted-like correspondence for the
words case.

\begin{thm}\label{thm:RSK}
	For $\smx\in \{\sml,\smr\}$, there is a one-to-one correspondence 
between the	 set
	of words over an alphabet and pairs of $\smx$\ps\ tableaux of the $\smx$\ps\
	stable pairs set over that alphabet. The one-to-one correspondence is given
	under the mapping
	\[ w\mapsto (\pP_\xps(w),\qQ_\xps(w)).\]
\end{thm}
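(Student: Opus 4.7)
The plan is to bootstrap from the standard case (Proposition~\ref{prop:std_rob_schensted}) using the standardization/de-standardization machinery from Section~2. Fix $\smx \in \{\sml,\smr\}$ and a totally ordered alphabet $\bB$.

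First I would verify that the proposed map $w \mapsto (\pP_\xps(w),\qQ_\xps(w))$ actually lands in the $\smx$\ps\ stable pairs set over $\bB$. The idea is to apply Proposition~\ref{prop:std_rob_schensted} to the standardized word $\std_\smx(w)$ over $\mathcal{C}(\bB)$, which gives that $(\pP_\xps(\std_\smx(w)), \qQ_\xps(\std_\smx(w)))$ is a standard stable pair. By Proposition~\ref{prop:std_and_Dstd}(1) the first component equals $\Std_\xps(\pP_\xps(w))$. The critical auxiliary claim is that $\qQ_\xps(w) = \qQ_\xps(\std_\smx(w))$: Lemma~\ref{lemma:equivalent_insertions} guarantees equivalent $\smx$\ps\ insertions for $w$ and $\std_\smx(w)$, so at each step $j$ the new box in Algorithm~\ref{alg:ExtendedPS} is created at the same column-row position, and hence both runs build identical recording tableaux. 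With this in hand, $(\Std_\xps(\pP_\xps(w)), \qQ_\xps(w))$ is a standard stable pair, so by definition $(\pP_\xps(w), \qQ_\xps(w))$ lies in the $\smx$\ps\ stable pairs set.

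Next I would construct the inverse. Given $(R,S)$ in the $\smx$\ps\ stable pairs set over $\bB$, by definition $(\Std_\xps(R), S)$ is a standard stable pair over $\mathcal{C}(\bB)$, so Proposition~\ref{prop:std_rob_schensted} yields a unique standard word $z$ with $(\pP_\xps(z), \qQ_\xps(z)) = (\Std_\xps(R), S)$. Lemma~\ref{rmk:std_surjection} forces $z = \std_\smx(w)$ for a (unique, by Remark~\ref{rmk:bijection_words}) word $w = \dstd(z)$ over $\bB$. Proposition~\ref{prop:std_and_Dstd}(2) then gives $\pP_\xps(w) = \Dstd(\Std_\xps(R)) = R$, and the recording-tableau equality from the first step gives $\qQ_\xps(w) = \qQ_\xps(z) = S$; so the map is surjective, and the inverse is well-defined on the stable pairs set. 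Injectivity of the original map follows because applying this inverse recipe to $(\pP_\xps(w), \qQ_\xps(w))$ returns $\dstd(\std_\smx(w)) = w$ via Remark~\ref{rmk:bijection_words}.

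The hard part will be the auxiliary claim that $\qQ_\xps(w) = \qQ_\xps(\std_\smx(w))$, since the notion of equivalent insertions from Lemma~\ref{lemma:equivalent_insertions} is phrased in terms of the final column-row positions of the input symbols, not the positions where successive new boxes appear. I would have to trace through Algorithm~\ref{alg:ExtendedPS} and observe that in both cases (appending a new rightmost column, or bumping at column $m$) the position in which the inserted symbol $w_j$ comes to rest determines uniquely the cell that received the new $j$ in the recording tableau, so equal final positions for the $j$-th symbol force equal recording cells at step $j$. Once this bookkeeping is settled, the rest of the proof is a formal composition of the bijections provided by Proposition~\ref{prop:std_rob_schensted}, Proposition~\ref{prop:std_and_Dstd}, Lemma~\ref{rmk:std_surjection} and Remark~\ref{rmk:bijection_words}.
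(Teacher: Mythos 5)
Your proposal is correct and follows essentially the same route as the paper's own proof: standardize $w$, invoke Proposition~\ref{prop:std_rob_schensted} over $\mathcal{C}(\bB)$, identify the first component via Proposition~\ref{prop:std_and_Dstd} and the second via the equality $\qQ_\xps(w)=\qQ_\xps(\std_\smx(w))$ coming from equivalent insertions, and recover the inverse through Lemma~\ref{rmk:std_surjection} and de-standardization. If anything, you are more careful than the paper, which asserts $\qQ_\xps(w)=\qQ_\xps(\std_\smx(w))$ without comment, whereas you correctly flag that this requires checking that equivalent insertions force identical step-by-step box creations in the recording tableau.
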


\begin{proof}
	Let $\bB$ be an alphabet and $\smx\in\{\sml,\smr\}$. Given a word $w$ 
over $\bB$, 
	we obtain a unique standard word $\std_\smx(w)$ over 
$\mathcal{C}(\bB)$. Using 
	Proposition~\ref{prop:std_rob_schensted}, we obtain a pair $(\pP_\xps(
	\std_\smx(w)), \qQ_\xps(\std_\smx(w)))$ in the standard stable pairs set over $\mathcal{C}(\bB)$.
	By Proposition~\ref{prop:std_and_Dstd}~(1), $\pP_\xps(\std_\smx(w))=
	\Std_{\smx}(\pP_\xps(w))$. Since $w$ and $\std_\smx(w)$ have equivalent 
$\smx$\ps\
	insertions, $\qQ_\xps(\std_\smx(w))=\qQ_\xps(w)$. Thus, it follows that $(\pP_\xps(\std_\smx(w)),\qQ_\xps(\std_\smx(w)
	))=(\Std_{\smx}(\pP_\xps(w)),\qQ_\xps(w))$. So, we obtain a pair of 
$\smx$\ps\ tableaux
	$(\pP_\xps(w),\qQ_\xps(w))$ in the $\smx$\ps\ stable pairs set over $\bB$. Note that
	different words lead to different pairs.
	
	Given a pair of $\smx$\ps\ tableaux $(R,S)$
	in the $\smx$\ps\ stable pairs set over $\bB$, by definition 
$(\Std_{\smx}(R),S)$
	is in the standard stable pairs set over $\mathcal{C}(\bB)$. By
	Proposition~\ref{prop:std_rob_schensted}, there exists a standard word over 
	$\mathcal{C}(\bB)$ that
	inserts to $(\Std_{\smx}(R),S)$. From Lemma~\ref{rmk:std_surjection}, 
this word
	must have the form $\std_\smx(w)$ for some word $w$ over $\bB$. 
Thus,
	$(\pP_\xps(\std_\smx(w)),\qQ_\xps(\std_\smx(w)))=(\Std_{\smx}(R),S)$. 
Since by Proposition~\ref{prop:std_and_Dstd}~(1), $\pP_\xps(\std_\smx(w))=
	\Std_{\smx}(\pP_\xps(w))$, and because $\qQ_\xps(\std_\smx(w))=
	\qQ_\xps(w)$, it follows that
	$(\Std_{\smx}(\pP_\xps(w)),\qQ_\xps(w))=(\Std_{\smx}(R),S)$. Applying 
$\Dstd()$ to both
	$\Std_{\smx}(\pP_\xps(w))$ and $\Std_{\smx}(R)$, we have 
$(\pP_\xps(w),\qQ_\xps(w))=
	(R,S)$. Therefore under Algorithm~\ref{alg:ExtendedPS}, the word $w=\dstd(\std_\smx(w))$ 
	over $\bB$ inserts to the pair $(R,S)$.
\end{proof}

\subsection{A Robinson--Schensted--Knuth-like correspondence}

It is common to present a standard word $\sigma=\sigma_1\sigma_2\cdots\sigma_k$
in two-line notation as
\begin{equation*}
\sigma=\begin{pmatrix}
1 & 2 & \cdots & k\\
\sigma_1 & \sigma_2 & \cdots & \sigma_k
\end{pmatrix}.
\end{equation*}
This notation can be extended to arbitrary words, $w=w_1w_2\cdots w_k$, with 
symbols $w_1,w_2,\ldots,w_k$, in the following way:
\begin{equation*}
w=\begin{pmatrix}
1 & 2 & \cdots & k\\
w_1 & w_2 & \cdots & w_k
\end{pmatrix}.
\end{equation*}
Looking to Algorithm~\ref{alg:ExtendedPS} from this perspective, we observe
that it just describes the simultaneous insertion of the words $w=w_1w_2\cdots w_k$  and
$12\cdots k$, where the standard word $12\cdots k$ is inserted according
to the \ps\ insertion of $w$.

Hence, seeing things from the two row notation perspective, 
Proposition~\ref{prop:std_rob_schensted}
establishes a bijection between standard words $\sigma=\begin{psmallmatrix}
1 & 2 & \cdots &  k\\
\sigma_1 & \sigma_2 & \cdots & \sigma_k
\end{psmallmatrix}$ and pairs of standard \ps\ tableaux $\big(\pP(\sigma),
\qQ(\sigma)\big)$ in the standard stable pairs set.
In the same way, Theorem~\ref{thm:RSK} establishes a bijection between words
$w= \begin{psmallmatrix}
1 & 2 & \cdots &  k\\
w_1 & w_2 & \cdots & w_k
\end{psmallmatrix}$ and pairs of standard \ps\ tableaux $\big(\pP(w),
\qQ(w)\big)$ in the stable pairs set. This perspective suggests a
generalization of Algorithm~\ref{alg:ExtendedPS} to two-rowed arrays 
$\begin{psmallmatrix}
u\\ v \end{psmallmatrix}$, where $u$ and $v$ are words of the same length and a consequent
generalization of both the Proposition~\ref{prop:std_rob_schensted} and the
Theorem~\ref{thm:RSK}.


So, henceforth a \emph{two-rowed array over an alphabet} will be an array 
$\begin{psmallmatrix} u\\
v \end{psmallmatrix}$ where $u$ and $v$ are arbitrary words of the same length
over that alphabet (note that word arrays are not biwords in the
sense of \cite{MR1905123}). We propose the following generalization of
Algorithm~\ref{alg:ExtendedPS}:

\begin{algorithm}[Extended \ps\ insertion of a word array]\label{alg:ArrayExtendedPS}
	~\par\nobreak
	\textit{Input:} A two-rowed array  $w=\begin{psmallmatrix} u\\ v 
\end{psmallmatrix}$ over an alphabet.

	\textit{Output:} A pair of tableaux $(\pPl(w),\qQl(w))$ (resp.\ $(\pPr(w),\qQr(w))$) of the same shape.

	\textit{Method:}
	\begin{enumerate}
		\item If $v=\varepsilon$ and $u=\varepsilon$, output a pair of empty
		tableaux $(\emptyset,\emptyset)$. Otherwise:
		\item $v=v_1\cdots v_n$ and $u=u_1\cdots u_n$, for symbols $v_1,\ldots,v_n$  and $u_1,\ldots,u_n$. Set
		\[
		\Big(\pPl\big(\begin{psmallmatrix} u_1\\ v_1 \end{psmallmatrix}\big),\qQl\big(\begin{psmallmatrix} u_1\\ v_1 \end{psmallmatrix}\big)\Big)=\left(\ytableausetup
		{boxsize=1.1em, aligntableaux=center}\begin{ytableau}
		v_1 \end{ytableau},\begin{ytableau}
		u_1 \end{ytableau}\right) =\Big(\pPr\big(\begin{psmallmatrix} u_1\\ v_1 \end{psmallmatrix}\big),\qQr\big(\begin{psmallmatrix} u_1\\ v_1 \end{psmallmatrix}\big)\Big),
		\]
		and for each remaining symbol $v_j$ with $1<j\leq n$,
		denote by $r_1\leq \dots\leq r_k$ (resp.\ $r_1< \dots< r_k$) 
the
		symbols in the bottom row of the tableau $\pPl\big(\begin{psmallmatrix} u_1 & \cdots & u_{j-1}\\ v_1 & \cdots &
		v_{j-1} \end{psmallmatrix}\big)$
		(resp.\ $\pPr\big(\begin{psmallmatrix} u_1 & \cdots & u_{j-1}\\ v_1 & \cdots &
		v_{j-1} \end{psmallmatrix}\big)$). Then:
		\begin{itemize}
			\item if $r_k \leq v_j$ (resp.\ $r_k < v_j$), 
simultaneously attach two new boxes, one to $\pPl\big(\begin{psmallmatrix} u_1 & 
\cdots & u_{j-1}\\ v_1 & \cdots &
			v_{j-1} \end{psmallmatrix}\big)$ (resp.\ $\pPr\big(\begin{psmallmatrix} u_1 & \cdots & u_{j-1}\\ v_1 & \cdots &
			v_{j-1} \end{psmallmatrix}\big)$) and the other to 
$\qQl\big(\begin{psmallmatrix} u_1 & \cdots & u_{j-1}\\ v_1 & \cdots &
			v_{j-1} \end{psmallmatrix}\big)$ (resp.\ $\qQr\big(\begin{psmallmatrix} u_1 & \cdots & u_{j-1}\\ v_1 & \cdots &
			v_{j-1} \end{psmallmatrix}\big)$), at the right of the 
bottom row, and fill the first with the symbol $v_j$ and the second with the 
symbol $u_j$;

			\item otherwise, if $m=\min\{i\in\{1,\ldots, k\}: v_j <
r_i\}$,
			(resp.\ $m=\min\{i\in\{1,\ldots, k\}: v_j\leq 
r_i\}$)
			simultaneously attach one new box on top of each of 
the $m$-th columns of
			$\pPl\big(\begin{psmallmatrix} u_1 & \cdots & u_{j-1}\\ v_1 & \cdots &
			v_{j-1} \end{psmallmatrix}\big)$ (resp.\ $\pPr\big(\begin{psmallmatrix} u_1 & \cdots & u_{j-1}\\ v_1 & \cdots &
			v_{j-1} \end{psmallmatrix}\big)$) and $\qQl\big(\begin{psmallmatrix} u_1 & \cdots & u_{j-1}\\ v_1 & \cdots &
			v_{j-1} \end{psmallmatrix}\big)$ (resp.\ $\qQr\big(\begin{psmallmatrix} u_1 & \cdots & u_{j-1}\\ v_1 & \cdots &
			v_{j-1} \end{psmallmatrix}\big)$). Then, insert $u_j$ in the new box of $\qQl\big(\begin{psmallmatrix} u_1 & \cdots & u_{j-1}\\ v_1 & \cdots &
			v_{j-1} \end{psmallmatrix}\big)$ (resp.\ $\qQr\big(\begin{psmallmatrix} u_1 & \cdots & u_{j-1}\\ v_1 & \cdots &
			v_{j-1} \end{psmallmatrix}\big)$)
			and in $\pPl\big(\begin{psmallmatrix} u_1 & \cdots & u_{j-1}\\ v_1 & \cdots &
			v_{j-1} \end{psmallmatrix}\big)$ (resp.\ $\pPr\big(\begin{psmallmatrix} u_1 & \cdots & u_{j-1}\\ v_1 & \cdots &
			v_{j-1} \end{psmallmatrix}\big)$) bump
			all the symbols of the column containing $r_m$ to the box above and insert
			$v_j$ in the box which has been cleared and previously contained the symbol
			$r_m$.
		\end{itemize}
		Output the resulting pair of tableaux.
	\end{enumerate}
\end{algorithm}

Considering the two-rowed array $w=\begin{psmallmatrix} 1 & 2 & 1 & 3\\ 1 & 1 & 
2 & 1
\end{psmallmatrix}$, the extended \lps\ insertion of the two-rowed array under
Algorithm~\ref{alg:ArrayExtendedPS} is given by the pair of tableaux

\[\left(\pPl(w),\qQl(w)\right)=\left(\begin{ytableau}
\none & \none & 2\\
1 & 1 & 1
\end{ytableau},\
\begin{ytableau}
\none & \none & 3\\
1 & 2 & 1
\end{ytableau}\right)
\]
The first observation that comes out from this example is that if
$w$ is an arbitrary two-rowed array, then
$\qQl(w)$ is not necessarily an \lps\ tableau. In fact, considering
the same two-rowed array we can conclude that $\qQr(w)$ is also not
necessarily an \rps\ tableau.

However, we are interested in finding those two-rowed arrays from which we can 
always
obtain a pair of \ps\ tableaux.

Since for symbols $y_1<y_2$, a two-rowed array
$\begin{psmallmatrix} u\\ v \end{psmallmatrix}\in \left\{\begin{psmallmatrix}
y_2 & y_1\\ y_1 & y_2 \end{psmallmatrix},\begin{psmallmatrix}
y_2 & y_1\\ y_2 & y_1 \end{psmallmatrix},\begin{psmallmatrix}
y_2 & y_1\\ y_1 & y_1 \end{psmallmatrix}\right\}$ gives rise to a tableau
$\qQ_\xps(w)$, for $\smx\in\left\{\sml,\smr\right\}$, which is not an 
$\smx$\ps\ tableau, we conclude that in order to obtain a pair of $\smx$\ps\ 
tableau, $u$ has to be ordered
weakly increasingly. However, there are still two-rowed arrays 
$w=\begin{psmallmatrix}
u\\ v  \end{psmallmatrix}$ where $u$ is ordered weakly increasingly, such that
$\qQ_\xps(w)$ is not an $\xps$ tableau. For instance, for symbols $y_1<y_2$ and 
two-rowed arrays $w=
\begin{psmallmatrix} y_1 & y_1\\ y_2 & y_1 \end{psmallmatrix}$ and
$w'= \begin{psmallmatrix} y_1 & y_1\\
y_1 & y_2 \end{psmallmatrix}$, we have that $\qQl(w)$ is not an \lps\ tableau and
$\qQr(w')$ is not an \rps\ tableau.

This leads us to two different types of two-rowed arrays. For any two-rowed 
array
$\begin{psmallmatrix} u\\ v \end{psmallmatrix}=
\begin{psmallmatrix} u_1 & u_2 & \cdots & u_k\\ v_1 & v_2 & \cdots &
v_k \end{psmallmatrix}$, consider the following conditions:
\begin{enumerate}
	\item $u_1\leq u_2\leq \dots \leq u_k$;
	\item for $i\in \left\{1,2,\ldots,k-1\right\}$, if $u_i=u_{i+1}$,
	then $v_i\leq v_{i+1}$;
	\item for $i\in \left\{1,2,\ldots,k-1\right\}$, if $u_i=u_{i+1}$,
	then $v_{i+1}\leq v_{i}$.
\end{enumerate}
The two-rowed array $\begin{psmallmatrix} u\\ v \end{psmallmatrix}$ is said to 
be in 
\emph{lexicographic order} if conditions $(1)$ and $(2)$ hold, and that
it  is in \emph{reverse 
lexicographic order}
if conditions $(1)$ and $(3)$ hold. For simplicity, we will refer the two-rowed 
array as an $l$-two-rowed array, in the first case, and as a $r$-two-rowed 
array in the second.

\begin{lem}\label{lem:word_array_ext_ins}
	For any $\smx\in \left\{\sml,\smr\right\}$, if $w=\begin{psmallmatrix} u\\ v \end{psmallmatrix}=
	\begin{psmallmatrix} u_1 & u_2 & \cdots & u_k\\ v_1 & v_2 & \cdots &
	v_k \end{psmallmatrix}$ is an $\smx$-two-rowed array, then
	the tableau $\qQ_\xps(w)$ obtained from the extended $\smx$\ps\ insertion of
	$w$ under
	Algorithm~\ref{alg:ArrayExtendedPS} is an $\smx$\ps\ tableau.
\end{lem}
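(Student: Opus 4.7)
The plan is to track the construction of $\qQ_\xps(w)$ box by box as Algorithm~\ref{alg:ArrayExtendedPS} processes the array $w$, using the fact that boxes are appended simultaneously to $\pP_\xps(w)$ and $\qQ_\xps(w)$ in matching positions. As a consequence, the sequence of entries read up a single column of $\qQ_\xps(w)$ has the form $u_{j_1}, u_{j_2}, \ldots$ with $j_1 < j_2 < \cdots$, and the bottom row read from left to right has the form $u_{t_1}, u_{t_2}, \ldots$ with $t_1 < t_2 < \cdots$, the $t_q$ being the steps at which new columns are created. Condition~(1) --- that $u$ is weakly increasing --- makes both sequences weakly increasing at no cost.

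Two of the four defining inequalities are therefore free. The bottom row of $\qQl(w)$ is weakly increasing, which is exactly the bottom-row condition for an \lps\ tableau, and each column of $\qQr(w)$ read top-to-bottom is weakly decreasing, which is exactly the column condition for an \rps\ tableau. The remaining content of the lemma is to upgrade weak monotonicity to strict monotonicity on the other side: the columns of $\qQl(w)$ must be strictly increasing bottom-to-top, and the bottom row of $\qQr(w)$ must be strictly increasing. Each half will be treated by contradiction: assume two consecutive entries are equal and play off the corresponding condition on $v$.

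For the \lps\ column, I would suppose $u_{j_i} = u_{j_{i+1}}$ are successive entries of a common column, say column $m$, of $\qQl(w)$. Weak monotonicity of $u$ forces it to be constant on $[j_i, j_{i+1}]$, and condition~(2) then yields $v_{j_i} \le v_{j_i+1} \le \cdots \le v_{j_{i+1}}$. Since column $m$ of $\qQl(w)$ gains no box between steps $j_i$ and $j_{i+1}$, neither does column $m$ of $\pPl(w)$, so the bottom entry $r_m$ of that column is still $v_{j_i}$ when step $j_{i+1}$ begins. The \lps\ bumping rule $v_{j_{i+1}} < r_m$ then conflicts with $v_{j_i} \le v_{j_{i+1}}$. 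The \rps\ bottom-row argument is parallel but uses condition~(3): if $u_{t_i} = u_{t_{i+1}}$ at consecutive new-column steps, $v$ is weakly decreasing on $[t_i, t_{i+1}]$, and by a short induction on $s$ running through $(t_i, t_{i+1})$, using the \rps\ bumping inequality $v_s \le r_K$ to see that whenever the rightmost column $K$ is bumped its bottom is replaced by some $v_s \ge v_{t_{i+1}}$, the invariant $r_K \ge v_{t_{i+1}}$ persists until step $t_{i+1}$. This contradicts $v_{t_{i+1}} > r_K$, which would be needed to open a new column.

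I expect the main obstacle to be the \rps\ bottom-row case, because it requires monitoring the evolution of the rightmost-column bottom $r_K$ over many intermediate steps rather than a single comparison; the work is to identify the correct invariant $r_K \ge v_{t_{i+1}}$ and to observe that condition~(3) is exactly what is needed to preserve it across any bumping into column $K$. By contrast, the \lps\ column case collapses to a one-step contradiction, and the two weak-monotonicity halves follow immediately from condition~(1) with no further argument.
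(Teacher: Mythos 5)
Your proof is correct, but it is organized quite differently from the paper's. The paper proves Lemma~\ref{lem:word_array_ext_ins} by induction on the number of distinct letters of $u$: writing $u=x_1^{i_1}\cdots x_{n+1}^{i_{n+1}}$, it analyses where the block of letters equal to the largest value $x_{n+1}$ can land (for $\qQl$: on top of a column not yet containing $x_{n+1}$ or at the right end of the bottom row; for $\qQr$: the first one possibly at the right end, the rest on top of columns) and appeals to the induction hypothesis for the prefix. You instead argue directly and locally: you note that each column of $\qQ_\xps(w)$ and the bottom row are filled in chronological order, so condition~(1) gives the two weak monotonicity requirements for free, and you obtain the two strict requirements by contradiction, using that the shapes of $\pP_\xps$ and $\qQ_\xps$ agree at every step so that an equality of consecutive recording entries pins down the bottom entry $r_m$ (resp.\ the invariant $r_K\geq v_{t_{i+1}}$ for the rightmost column) and contradicts the bumping (resp.\ new-column) test together with condition~(2) (resp.\ (3)). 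Your route is arguably cleaner and avoids induction; one thing the paper's version buys that yours does not is that its block-by-block case analysis is cited verbatim later, in the discussion of the reverse insertion methods following the lemma, to deduce the inequalities $v_i\leq v_{i+1}$ and $v_{i+1}\leq v_i$ for the recovered arrays, so if your proof were substituted those later references would need to be rephrased in terms of your chronological/invariant argument (which they can be).
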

\begin{proof}
	Both the \lps\ and \rps\ cases follow by induction on the number $n$ of distinct
	symbols from $u$.

	For the \lps\ case, the induction proceeds in the following way.
	Case $n=1$. In this case $w=\begin{psmallmatrix} u\\ v 
\end{psmallmatrix}=
	\begin{psmallmatrix} u_1 & \cdots & u_1\\ v_1 & \cdots &
	v_k \end{psmallmatrix}$. Since $v_1\leq v_2\leq \dots \leq v_k$,
	by Algorithm~\ref{alg:ArrayExtendedPS}, $\left(\pPl(w),\qQl(w)\right)=
	\left(\ytableausetup
	{mathmode, boxsize=1.1em, aligntableaux=center}
	\begin{ytableau}
	v_1 & v_2
	\end{ytableau}\cdots\begin{ytableau}
	v_k
	\end{ytableau}, \begin{ytableau}
	u_1 & u_1
	\end{ytableau}\cdots\begin{ytableau}
	u_1
	\end{ytableau}\right)$ and thus $\qQl(w)$ is an \lps\ tableau.

	Fix $n\geq 1$ and suppose by induction hypothesis that the result holds for $n$.
	Let us prove the result for $n+1$. Having $n+1$ distinct symbols, $u=x_1^{i_1}
	x_2^{i_2}\cdots x_{n+1}^{i_{n+1}}$, for some symbols $x_1<x_2< \dots 
<x_{n+1}$
	and indexes $i_1,i_2,\ldots, i_{n+1}\in\mathbb{N}$. Thus
	\[w=
	\begin{pmatrix} x_1 & \cdots & x_1 & x_2 & \cdots & x_2 & \cdots & x_{n+1} \\
	v_1 & \cdots & v_{i_1} & v_{i_1+1} & \cdots & v_{i_1+i_2} & \cdots & v_k
	\end{pmatrix}.\]
	Since $v_{i_1+\cdots+i_n+1}\leq \dots \leq v_k$, according to 
Algorithm~\ref{alg:ArrayExtendedPS}, for any $m\in\{i_1+\cdots+i_n+2,\ldots,
	k\}$, $v_m$ is inserted in the bottom row in a column to the right of the
	column where $v_{m-1}$ was inserted. Therefore each of the symbols $x_{n+1}$
	from $u$ is inserted either on top of a column not containing $x_{n+1}$	or as
	the rightmost symbol of the bottom row. Since by induction hypothesis
	$\qQl\Big(\begin{psmallmatrix} x_1^{i_1} & \cdots & x_n^{i_n}\\ v_1 & \cdots &
	v_{i_1+\cdots+i_n} \end{psmallmatrix}\Big)$ is an \lps\
	tableau, from this discussion we conclude that the columns $\qQl(w)$
	are strictly decreasing top to bottom and the bottom row is weakly increasing
	from left to right. Therefore it follows that $\qQl(w)$ is an \lps\
	tableau.

	Regarding the \rps\ case, the induction proceeds in the following
	way. Case n=1. Recall that $w=\begin{psmallmatrix} u\\ v 
\end{psmallmatrix}=
	\begin{psmallmatrix} u_1 & \cdots & u_1\\ v_1 & \cdots &
	v_k \end{psmallmatrix}$. Since $v_k\leq \dots \leq v_2\leq 
v_1$, it follows
	that
	\begin{equation*}
	\pPr(w)=\begin{ytableau}
	v_k\\
	\none[\vdots]\\
	v_2\\
	v_1
	\end{ytableau}\ \text{ and thus }\
	\qQr(w)=\begin{ytableau}
	u_1\\
	\none[\vdots]\\
	u_1\\
	u_1
	\end{ytableau}.
	\end{equation*}
	So, we conclude that $\qQr(w)$ is an \rps\ tableau.

	Fix $n\geq 1$ and suppose by induction hypothesis that the result holds for $n$.
	Since $u$ contains $n+1$ symbols $x_1<x_2<\dots<x_{n+1}$, again 
$u=x_1^{i_1}
	x_2^{i_2}\cdots x_{n+1}^{i_{n+1}}$, and
	\[w=
	\begin{pmatrix} x_1 & \cdots & x_1 & x_2 & \cdots & x_2 & \cdots & x_{n+1} \\
	v_1 & \cdots & v_{i_1} & v_{i_1+1} & \cdots & v_{i_1+i_2} & \cdots & v_k
	\end{pmatrix}.\]
	According to Algorithm~\ref{alg:ArrayExtendedPS}, the symbol $v_{i_1+\cdots+
	i_n+1}$ is inserted in the bottom row of $\pPr\Big(\begin{psmallmatrix} x_1^{i_1} & \cdots & x_n^{i_n}\\ v_1 & \cdots &
v_{i_1+\cdots+i_n} \end{psmallmatrix}\Big)$,
	either as the rightmost symbol or in a column of this tableau. Since
	$v_k\leq \dots \leq v_{i_1+\cdots+i_n+1}$, for any 
$m\in \{i_1+\cdots+i_n+2,
	\ldots, k\}$, $v_m$ is going to be inserted in the bottom row below or to the
	left of the column where $v_{m-1}$ was inserted. So, the first symbol $x_{n+1}$
	of $u$ is either inserted to the right of the rightmost box, or on top of a
	column of $\qQr\Big(\begin{psmallmatrix} x_1^{i_1} & \cdots & x_n^{i_n}\\ v_1 & \cdots &
	v_{i_1+\cdots+i_n} \end{psmallmatrix}\Big)$. The
	remaining symbols $x_{n+1}$ are always inserted on top of a column.

	Since by induction hypothesis
	$\qQr\Big(\begin{psmallmatrix} x_1^{i_1} & \cdots & x_n^{i_n}\\ v_1 & \cdots &
	v_{i_1+\cdots+i_n} \end{psmallmatrix}\Big)$ is an \rps\
	tableau, the previous discussion allows us to conclude that $\qQr(w)$ is
	an \rps\ tableau.
\end{proof}

As in the previous cases, we are interested in describing the reverse process:
that is, starting from a pair of \ps\ tableaux our goal is to obtain the
unique two-rowed array that gives rise to this pair under
Algorithm~\ref{alg:ArrayExtendedPS}. The proof of
Lemma~\ref{lem:word_array_ext_ins} will allow us to describe it, and suggests that
the \lps\ and \rps\ versions of the reverse process will be different.

The \lps\ \emph{reverse insertion method} can be described as follows: let
$(R,S)$ be a pair of \lps\ tableaux of the same shape. Then, $\abs{R}=\abs{S}
=k$, for some $k\in \mathbb{N}$. Let $(R,S)=(R_k,S_k)$, and for any
$i\in\{1,\ldots,k-1\}$, let $(R_i,S_i)$ be the pair of tableaux (of
the same shape) obtained from removing in $S_{i+1}$ the box containing the largest
symbol that is farthest to the right, $u_{i+1}$, and in $R_{i+1}$ the box
containing the symbol $v_{i+1}$ that is in the bottom of the corresponding column
of $R_{i+1}$. From this process we obtain a two-rowed array 
$\begin{psmallmatrix}
u\\ v \end{psmallmatrix}= \begin{psmallmatrix} u_1 & u_2 & \cdots & u_k\\ v_1 & v_2
& \cdots & v_k \end{psmallmatrix}$ and it is clear that $u_1\leq 
u_2\leq\dots\leq
u_k$.

Moreover, assuming that  for each $i$,  $(R_i,S_i)$ is a pair of \lps\ 
tableaux, then if $u_i=u_{i+1}$ for some $i\in \{1,\ldots,k-1\}$, then  
$u_{i+1}$ and $u_i$ are in distinct columns and $u_{i+1}$ is
farthest to the right. So, if $v_{i+1}$ is the symbol in the box removed from the
bottom row of $R_{i+1}$, then $v_{i+1}$ is farther to the right than the symbol $v_i$
in the box removed from the bottom row of $P_i$. By the proof of the \lps\ case of
Lemma~\ref{lem:word_array_ext_ins}, it follows that $v_i\leq v_{i+1}$. 
Therefore, the
two-rowed array $\begin{psmallmatrix} u\\ v \end{psmallmatrix}$ obtained via 
this method is
an $l$-two-rowed array.

Similarly, consider the \rps\ \emph{reverse insertion method} described as 
follows. Let $(R,S)$ be a
pair of \rps\ tableaux of the same shape. Then, $\abs{R}= \abs{S}=k$, for
some $k\in \mathbb{N}$. Let $(R,S)=(R_k,S_k)$, and for any  $i\in\{1,\ldots,
k-1\}$, let $(R_i,S_i)$ be the pair of tableaux obtained from removing in
$S_{i+1}$ the box containing the largest symbol that is farthest to the left on 
top
of the respective column, $u_{i+1}$, and in $R_{i+1}$ the box containing the
symbol $v_{i+1}$ that is in the bottom of the corresponding column.

Again, if $\begin{psmallmatrix} u\\ v \end{psmallmatrix}=
\begin{psmallmatrix} u_1 & u_2 & \cdots & u_k\\ v_1 & v_2 & \cdots &
v_k \end{psmallmatrix}$ is the two-rowed array that is obtained via this 
process, then it
is clear that $u_1\leq u_2\leq\dots\leq u_k$. Assuming that $(R_i,S_i)$ is a 
pair of \rps\ tableaux for any $i\in\{1,\ldots,k-1\}$, if $u_i=u_{i+1}$ for some
$i\in\{1,\ldots,k-1\}$, then $u_{i+1}$ and
$u_i$ are either in distinct columns, $u_{i+1}$ being farther to the left, or
$u_{i+1}$ and $u_{i}$ are in the same column, $u_{i+1}$ being on top of $u_i$. In the
first case, if $v_{i+1}$ is the symbol in the box removed from the bottom row of
$R_{i+1}$, then $v_{i+1}$ is farther to the left than the symbol $v_i$ removed 
from
the box in the bottom row of $R_{i}$. By the proof of the \rps\ case of
Lemma~\ref{lem:word_array_ext_ins}, it follows that $v_{i+1}<v_i$. In the later case,
if $v_{i+1}$ is the symbol in the box removed from the bottom row of $R_{i+1}$, 
then
$v_{i+1}$ is below the symbol removed from the box in the bottom row of $R_i$, 
$v_i$.
By the proof of the \rps\ case of Lemma~\ref{lem:word_array_ext_ins},
$v_{i+1}\leq v_i$. Therefore, the two-rowed array
$\begin{psmallmatrix} u\\ v \end{psmallmatrix}$ obtained through this method is an
$r$-two-rowed array.

\begin{figure}[t]
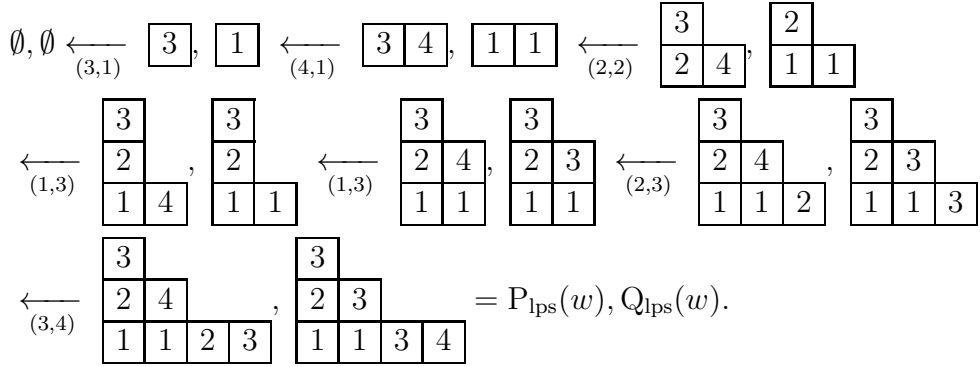

	\centering
	\begin{align*}
	&\emptyset,\emptyset \xleftarrow[(3,1)]{\ \ \ }\
	\ytableausetup
	{mathmode, boxsize=1.3em, aligntableaux=center}
	\begin{ytableau}
	3
	\end{ytableau},\
	\begin{ytableau}
	1
	\end{ytableau}\
	\xleftarrow[(4,1)]{\ \ \ }\
	\begin{ytableau}
	3 & 4
	\end{ytableau},\
	\begin{ytableau}
	1 & 1
	\end{ytableau}\
	\xleftarrow[(2,2)]{\ \ \ }\
	\begin{ytableau}
	3 & \none\\
	2 & 4
	\end{ytableau},\
	\begin{ytableau}
	2 & \none\\
	1 & 1
	\end{ytableau}\\
	&\xleftarrow[(1,3)]{\ \ \ }\
	\begin{ytableau}
	3 & \none\\
	2 & \none\\
	1 & 4
	\end{ytableau},\
	\begin{ytableau}
	3 & \none\\
	2 & \none\\
	1 & 1
	\end{ytableau}\
	\xleftarrow[(1,3)]{\ \ \ }\
	\begin{ytableau}
	3 & \none \\
	2 & 4 \\
	1 & 1
	\end{ytableau},\
	\begin{ytableau}
	3 & \none \\
	2 & 3 \\
	1 & 1
	\end{ytableau}\
	\xleftarrow[(2,3)]{\ \ \ }\
	\begin{ytableau}
	3 & \none &\none \\
	2 & 4 &\none\\
	1 & 1 & 2
	\end{ytableau},\
	\begin{ytableau}
	3 & \none \\
	2 & 3 \\
	1 & 1 & 3
	\end{ytableau}\\
	&\xleftarrow[(3,4)]{\ \ \ }\
	\begin{ytableau}
	3 & \none & \none & \none\\
	2 & 4 & \none & \none\\
	1 & 1 & 2 & 3
	\end{ytableau},\
	\begin{ytableau}
	3 & \none & \none & \none\\
	2 & 3 & \none & \none\\
	1 & 1 & 3 & 4
	\end{ytableau}
	=\pPl(w),\qQl(w).\
	\end{align*}
	\caption{Extended \lps\ insertion of the $l$-two-rowed array $w=\protect
		\begin{psmallmatrix}u\\ v \protect\end{psmallmatrix}= \protect\begin{psmallmatrix}
		1 & 1 & 2 & 3 & 3 & 3 & 4\\ 3 & 4 & 2 & 1 & 1 & 2 & 3
		\protect\end{psmallmatrix}$	under Algorithm~\ref{alg:ArrayExtendedPS}, where
		the pair below each arrow indicates the pair of symbols from $w$ 
being
		inserted.}
	\label{figure:extended_lps_insertion}
\end{figure}

Figure \ref{figure:extended_lps_insertion} shows the extended \lps\ insertion
of the $l$-two-rowed array $w=
\begin{psmallmatrix} 1 & 1 & 2 & 3 & 3 & 3 & 4\\ 3 & 4 & 2 &
1 & 1 & 2 & 3 \end{psmallmatrix}$ under Algorithm~\ref{alg:ArrayExtendedPS},
while
Figure~\ref{figure:ext_reading_tableaux} describes the
construction of the original $l$-two-rowed array $w=\protect\begin{psmallmatrix}
1 & 1 & 2 & 3 & 3 & 3 & 4\\ 3 & 4 & 2 & 1 & 1 & 2 & 3
\protect\end{psmallmatrix}$ from that pair of tableaux according to the
\lps\ reverse insertion method previously described.

\begin{figure}[t]
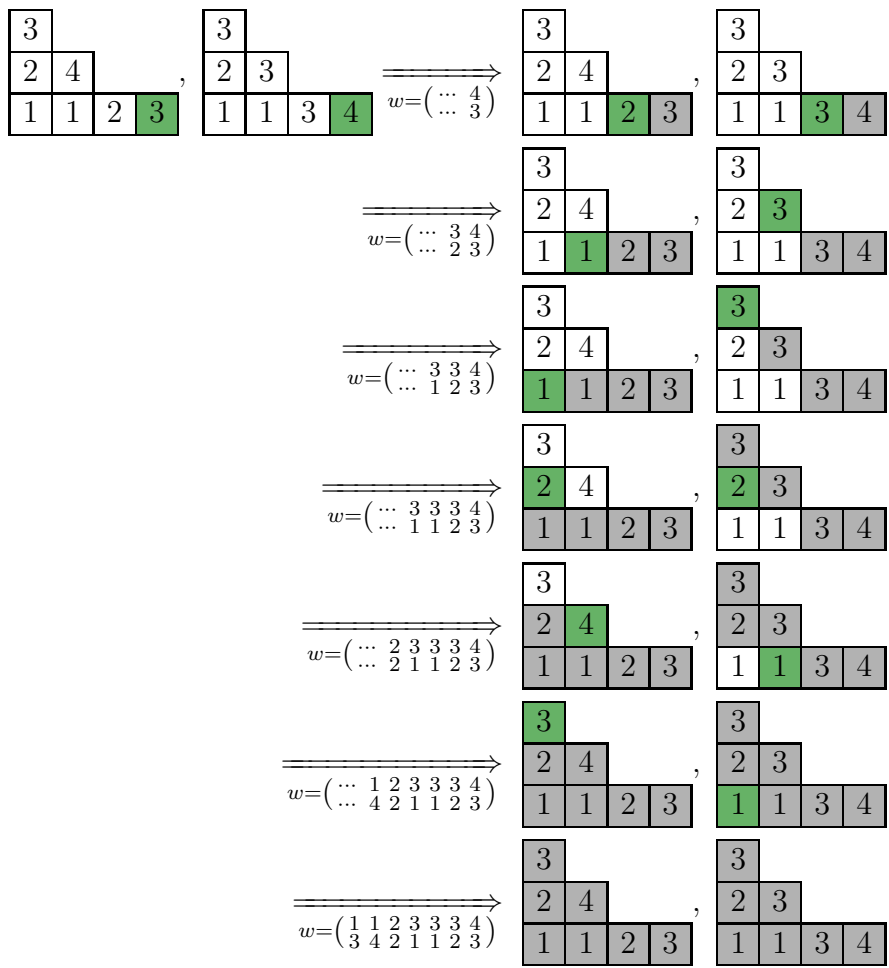

	\centering
	\begin{align*}
	\begin{ytableau}
	3 & \none & \none & \none\\
	2 & 4 & \none & \none\\
	1 & 1 & 2 & *(Green!60!white)3
	\end{ytableau},\
	\begin{ytableau}
	3 & \none & \none & \none\\
	2 & 3 & \none & \none\\
	1 & 1 & 3 & *(Green!60!white)4
	\end{ytableau} \xRightarrow[{w=\protect\begin{psmallmatrix}
		\cdots & 4 \\ \cdots & 3
		\protect\end{psmallmatrix}}]{\ \ \ }\
	\begin{ytableau}
	3 & \none & \none & \none\\
	2 & 4 & \none & \none\\
	1 & 1 & *(Green!60!white)2 & *(Grey!60!white)3
	\end{ytableau},\
	\begin{ytableau}
	3 & \none & \none & \none\\
	2 & 3 & \none & \none\\
	1 & 1 & *(Green!60!white)3 & *(Grey!60!white)4
	\end{ytableau}&&\\
	\xRightarrow[{w=\protect\begin{psmallmatrix}
		\cdots & 3 & 4\\ \cdots & 2 & 3
		\protect\end{psmallmatrix}}]{\ \ \ }\
	\begin{ytableau}
	3 & \none & \none & \none\\
	2 & 4 & \none & \none\\
	1 & *(Green!60!white)1 & *(Grey!60!white)2 & *(Grey!60!white)3
	\end{ytableau},\
	\begin{ytableau}
	3 & \none & \none & \none\\
	2 & *(Green!60!white)3 & \none & \none\\
	1 & 1 & *(Grey!60!white)3 & *(Grey!60!white)4
	\end{ytableau}&&\\
	\xRightarrow[{w=\protect\begin{psmallmatrix}
		\cdots & 3 & 3 & 4\\ \cdots & 1 & 2 & 3
		\protect\end{psmallmatrix}}]{\ \ \ }\
	\begin{ytableau}
	3 & \none & \none & \none\\
	2 & 4 & \none & \none\\
	*(Green!60!white)1 & *(Grey!60!white)1 & *(Grey!60!white)2 & *(Grey!60!white)3
	\end{ytableau},\
	\begin{ytableau}
	*(Green!60!white)3 & \none & \none & \none\\
	2 & *(Grey!60!white)3 & \none & \none\\
	1 & 1 & *(Grey!60!white)3 & *(Grey!60!white)4
	\end{ytableau}&&\\
	\xRightarrow[{w=\protect\begin{psmallmatrix}
		\cdots & 3 & 3 & 3 & 4\\ \cdots & 1 & 1 & 2 & 3
		\protect\end{psmallmatrix}}]{\ \ \ }\
	\begin{ytableau}
	3 & \none & \none & \none\\
	*(Green!60!white)2 & 4 & \none & \none\\
	*(Grey!60!white)1 & *(Grey!60!white)1 & *(Grey!60!white)2 & *(Grey!60!white)3
	\end{ytableau},\
	\begin{ytableau}
	*(Grey!60!white)3 & \none & \none & \none\\
	*(Green!60!white)2 & *(Grey!60!white)3 & \none & \none\\
	1 & 1 & *(Grey!60!white)3 & *(Grey!60!white)4
	\end{ytableau}&&\\
	\xRightarrow[{w=\protect\begin{psmallmatrix}
		\cdots & 2 & 3 & 3 & 3 & 4\\ \cdots & 2 & 1 & 1 & 2 & 3
		\protect\end{psmallmatrix}}]{\ \ \ }\
	\begin{ytableau}
	3 & \none & \none & \none\\
	*(Grey!60!white)2 & *(Green!60!white)4 & \none & \none\\
	*(Grey!60!white)1 & *(Grey!60!white)1 & *(Grey!60!white)2 & *(Grey!60!white)3
	\end{ytableau},\
	\begin{ytableau}
	*(Grey!60!white)3 & \none & \none & \none\\
	*(Grey!60!white)2 & *(Grey!60!white)3 & \none & \none\\
	1 & *(Green!60!white)1 & *(Grey!60!white)3 & *(Grey!60!white)4
	\end{ytableau}&&\\
	\xRightarrow[{w=\protect\begin{psmallmatrix}
		\cdots & 1 & 2 & 3 & 3 & 3 & 4\\ \cdots & 4 & 2 & 1 & 1 & 2 & 3
		\protect\end{psmallmatrix}}]{\ \ \ }\
	\begin{ytableau}
	*(Green!60!white)3 & \none & \none & \none\\
	*(Grey!60!white)2 & *(Grey!60!white)4 & \none & \none\\
	*(Grey!60!white)1 & *(Grey!60!white)1 & *(Grey!60!white)2 & *(Grey!60!white)3
	\end{ytableau},\
	\begin{ytableau}
	*(Grey!60!white)3 & \none & \none & \none\\
	*(Grey!60!white)2 & *(Grey!60!white)3 & \none & \none\\
	*(Green!60!white)1 & *(Grey!60!white)1 & *(Grey!60!white)3 & *(Grey!60!white)4
	\end{ytableau}&&\\
	\xRightarrow[{w=\protect\begin{psmallmatrix}
		1 & 1 & 2 & 3 & 3 & 3 & 4\\ 3 & 4 & 2 & 1 & 1 & 2 & 3
		\protect\end{psmallmatrix}}]{\ \ \ }\
	\begin{ytableau}
	*(Grey!60!white)3 & \none & \none & \none\\
	*(Grey!60!white)2 & *(Grey!60!white)4 & \none & \none\\
	*(Grey!60!white)1 & *(Grey!60!white)1 & *(Grey!60!white)2 & *(Grey!60!white)3
	\end{ytableau},\
	\begin{ytableau}
	*(Grey!60!white)3 & \none & \none & \none\\
	*(Grey!60!white)2 & *(Grey!60!white)3 & \none & \none\\
	*(Grey!60!white)1 & *(Grey!60!white)1 & *(Grey!60!white)3 & *(Grey!60!white)4
	\end{ytableau}&&.\
	\end{align*}
	\caption{Process of obtaining the $l$-two-rowed array 
${w=\protect\begin{psmallmatrix}
			1 & 1 & 2 & 3 & 3 & 3 & 4\\ 3 & 4 & 2 & 1 & 1 & 2 & 3
			\protect\end{psmallmatrix}}$ through the \lps\ reverse insertion of the
		pair of \lps\ tableaux $(\pPl(w),\qQl(w))$.}
	\label{figure:ext_reading_tableaux}
\end{figure}

As in the words case, there are pairs of \ps\ tableaux $(P,Q)$ such that
the word array, obtained from the \ps\ reverse insertion applied
to $(P,Q)$, does not insert to $(P,Q)$
under Algorithm~\ref{alg:ArrayExtendedPS}.
 For instance, the reverse \lps\ insertion of the pair of \lps\ tableaux
\[\left(\begin{ytableau}
3 & \none \\
2 & \none \\
1 & 1
\end{ytableau},\
\begin{ytableau}
3 & \none \\
2 & \none\\
1 & 1
\end{ytableau}\right)\]
leads to the two-rowed array $ w =
\begin{psmallmatrix} 1 & 1 & 2 & 3 \\ 3 & 1 & 2 & 1 \end{psmallmatrix}$, which
under the \lps\ version of Algorithm~\ref{alg:ArrayExtendedPS} inserts to the
pair
\[\left(\pPl(w),\qQl(w)\right)=\left(\begin{ytableau}
3 & 2 \\
1 & 1
\end{ytableau},\
\begin{ytableau}
1 & 3\\
1 & 2
\end{ytableau}\right).\]
In fact, from this example we conclude that there are pairs of \ps\ tableaux
such that when applying the \ps\ reverse insertion method we arrive to a word
array that is not a \ps\ word array. So, just as in the words
case, we will have to restrict the pairs of \ps\ tableaux that we can
consider.

We will need auxiliary results in order to prove the existence of a
Robinson--Schensted--Knuth-like correspondence for \ps\ tableaux.

\begin{lem}\label{lem:Std_Q}
	Given $\smx\in\{\sml,\smr\}$, for any $x$-two-rowed array 
$\begin{psmallmatrix} u \\ v
	\end{psmallmatrix}$ we have 
\[\Std_{\smx}\big(\qQ_\xps\big(\begin{psmallmatrix} u \\ v
	\end{psmallmatrix}\big)\big)=\qQ_\xps\big(\begin{psmallmatrix} \stdl(u) \\ v
	\end{psmallmatrix}\big).\] 
\end{lem}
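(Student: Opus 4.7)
The plan is to verify the equality by showing both sides agree on shape, on underlying symbols, and on attached indices, handling these three aspects separately. Fix an $x$-two-rowed array $w=\begin{psmallmatrix} u \\ v \end{psmallmatrix}$ and note first that the positions at which new boxes appear in $\qQ_\xps$ during Algorithm~\ref{alg:ArrayExtendedPS} are determined by the insertion of $v$ alone (they mirror the positions in which Algorithm~\ref{alg:PSinsertion} modifies $\pP_\xps(v)$) and do not depend on $u$. Hence $\qQ_\xps(\begin{psmallmatrix} u \\ v \end{psmallmatrix})$ and $\qQ_\xps(\begin{psmallmatrix} \stdl(u) \\ v \end{psmallmatrix})$ have the same shape, and so does $\Std_\smx$ applied to the former. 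At the $j$-th new box, $\qQ_\xps(\begin{psmallmatrix} u \\ v \end{psmallmatrix})$ stores $u_j$ while $\qQ_\xps(\begin{psmallmatrix} \stdl(u) \\ v \end{psmallmatrix})$ stores $\stdl(u)_j$, whose underlying symbol is $u_j$. Since $\Std_\smx$ preserves the underlying symbol of each box, the two sides agree pointwise at the level of underlying symbols.

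The core of the proof is then checking that the indices match. Since $\stdl$ labels the occurrences of each symbol $a$ in left-to-right order along $u$, it suffices to show that for any symbol $a$ and any maximal block of positions $i_1<\cdots<i_p$ with $u_{i_1}=\cdots=u_{i_p}=a$, the $\smx$-reading order traverses the boxes of $\qQ_\xps(w)$ carrying the label $a$ in insertion order, i.e.\ first the box created by the insertion corresponding to $v_{i_1}$, then $v_{i_2}$, and so on. Boxes belonging to different blocks carry distinct underlying symbols and are thus independent for indexing purposes.

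I would establish the within-a-block claim by re-examining the casework of the proof of Lemma~\ref{lem:word_array_ext_ins}. For $\smx=\sml$: during the block, each successive $a$ is added either as the bottom of a strictly newer rightmost column or on top of an existing column not containing $a$, so the column indices strictly increase and no column receives two $a$'s from this block. Since $\Std_\sml$ traverses columns left-to-right, the reading order of these $a$'s coincides with the insertion order. For $\smx=\smr$: condition~(3) of $r$-two-rowed arrays gives $v_{i_1}\geq\cdots\geq v_{i_p}$, from which one checks inductively that the column index chosen by successive insertions weakly decreases, and whenever two consecutive $a$'s fall in the same column the later one is placed strictly above the earlier. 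Since $\Std_\smr$ traverses columns right-to-left and, within each column, from bottom to top, the $a$'s are again encountered in insertion order.

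Combining these ingredients yields the claimed identity. The main obstacle is the \rps\ case: unlike the \lps\ case, where block-$a$ boxes sit in distinct columns so the left-to-right column traversal immediately matches insertion order, in the \rps\ case block-$a$ boxes can stack inside a single column, so the column-monotonicity observation must be coupled with the within-column bottom-to-top order to match the two reading orders.
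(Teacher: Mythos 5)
Your proposal is correct and follows essentially the same route as the paper's proof: both decompose the problem into agreement of positions/underlying symbols (which depend only on $v$) plus agreement of indices, and both settle the indices by using the weak monotonicity of $v$ within each constant block of $u$ to show that the boxes carrying a given symbol $a$ appear in positions (strictly rightward columns for $\sml$; weakly leftward columns, stacking upward within a column, for $\smr$) that the reading order of $\Std_\smx$ traverses in insertion order. No gaps.
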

\begin{proof}
	Let $\smx\in\{\sml,\smr\}$ and let $w=\begin{psmallmatrix} u \\ v 
\end{psmallmatrix}=
	\begin{psmallmatrix} u_1 & u_2 & \cdots & u_k\\ v_1 & v_2 & \cdots &
	v_k \end{psmallmatrix}$ be an arbitrary $\smx$-two-rowed array. Then 
$u=x_1^{i_1}
	\cdots x_n^{i_n}$ for some symbols $x_1<\dots<x_n$ and indexes 
$i_1,\ldots,
	i_n\in\mathbb{N}$ such that $i_1+\cdots+i_n=k$. Thus,
	\begin{equation*}
	w=
	\begin{pmatrix} x_1 & \cdots & x_1 & x_2 & \cdots & x_2 & \cdots & x_{n} \\
	v_1 & \cdots & v_{i_1} & v_{i_1+1} & \cdots & v_{i_1+i_2} & \cdots & v_k
	\end{pmatrix}.
	\end{equation*}
	Since $u=x_1^{i_1}\cdots x_n^{i_n}$, then $\stdl(u)=\left(x_1\right)_1
	\cdots \left(x_1\right)_{i_1}
	\cdots \left(x_n\right)_{1}\cdots \left(x_n\right)_{i_n}$.
	Let $i_0=0$. In the \lps\ case
	(resp., \rps), since $w$ is a $l$-two-rowed word, for any 
$j\in\{0,\ldots, n-1\}$,  $m\in\{1, \ldots,i_{j+1}-1\}$ we have	
	\[v_{i_1+\cdots+i_j+m}\leq 
v_{i_1+\cdots+
		i_j+m+1}\] (resp., $\geq$). By  
	Algorithm~\ref{alg:ArrayExtendedPS}, it follows that for any 
$k\in\{1,\ldots, n\}$, if $\left(x_k\right)_p
	<\left(x_k\right)_q$ for some  $p,q\in\{1,\ldots,i_k
	\}$ with $p<q$, then in $\qQl\big(\begin{psmallmatrix} \stdl(u) \\ 
v
	\end{psmallmatrix}\big)$ (resp., $\qQr\big(\begin{psmallmatrix} 
\stdl(u) \\ v
	\end{psmallmatrix}\big)$), the symbol
	$\left(x_k\right)_q$ is in a column to the right of the column
	containing the  symbol $\left(x_k\right)_p$ (resp., either in the same 
column of $\left(x_k\right)_p$ on top
	of it or in a column to the left of it). 
	
	Since $\qQl(w)$ (resp., $\qQr(w)$) is
	an \lps\ (resp., \rps) tableau, by the process of left (resp., right) 
standardization of
	a tableau, it follows that for any $k\in\{1,\ldots,n\}$, if 
$\left(x_k\right)_p<\left(x_k\right)_q$ for
	some  $p,q\in\{1,\ldots,i_k\}$ with $p<q$, then in
	$\qQl\big(\begin{psmallmatrix} \stdl(u) \\ v
	\end{psmallmatrix}\big)$ (resp., $\qQr\big(\begin{psmallmatrix} 
\stdl(u) \\ v
	\end{psmallmatrix}\big)$), the symbol $\left(x_k\right)_q$ is
	 in a column to the right of the column containing the symbol
	$\left(x_k\right)_p$ (resp., either in the same column of 
$\left(x_k\right)_p$ on top of it or in a 
column to
	the left of it).  
	
	As both $u$ and $\stdl(u)$ are inserted
	according to the \lps\ (resp., \rps) insertion of $v$, it
	follows that the underlying indexes of symbols in the same
	position of $\qQl(w)$ and $\qQl\big(\begin{psmallmatrix} \stdl(u) \\ v
	\end{psmallmatrix}\big)$ (resp., $\qQr(w)$ and 
$\qQr\big(\begin{psmallmatrix} \stdl(u) \\ v
	\end{psmallmatrix}\big)$) are the same. This together with the previous considerations allow us
	to conclude that $\Std_{\sml}(\qQl(w))=\qQl\big(\begin{psmallmatrix} 
\stdl(u) \\ v
	\end{psmallmatrix}\big)$ (resp., $\Std_{\smr}(\qQr(w))
	=\qQr\big(\begin{psmallmatrix} \stdl(u) \\ v
	\end{psmallmatrix}\big)$).
\end{proof}

\begin{prop}\label{prop:standardization_P_Q}
	For $\smx\in\{\sml,\smr\}$ and any $\smx$-two-rowed array 
$w=\begin{psmallmatrix} u \\ v
	\end{psmallmatrix}$ we have:
	\begin{itemize}
		\item[$(i)$] 
$\Std_{\smx}(\qQ_\xps(w))=\qQ_\xps\big(\begin{psmallmatrix} \stdl(u) \\ v
		\end{psmallmatrix}\big)=\qQ_{\xps}\Big(\begin{psmallmatrix} \stdl(u) \\ \std_\smx(v)
		\end{psmallmatrix}\Big)$;
		\item[$(ii)$] $\Std_{\smx}\left(\pP_\xps(w)\right)=\pP_\xps\Big(\begin{psmallmatrix} \stdl(u) \\ \std_\smx(v)
		\end{psmallmatrix}\Big)=\pP_\xps\Big(\begin{psmallmatrix} u \\ \std_\smx(v)
		\end{psmallmatrix}\Big)$.
	\end{itemize}
\end{prop}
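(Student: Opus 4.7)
The plan is to reduce each of the four claimed equalities to an already-established result by carefully separating the roles of the top and bottom rows of an array under Algorithm~\ref{alg:ArrayExtendedPS}. The key structural observation (which I would state up front) is that the algorithm constructs $\pP_\xps\big(\begin{psmallmatrix}u'\\v'\end{psmallmatrix}\big)$ using only the bottom-row symbols $v_j'$, while the top-row symbols $u_j'$ serve solely to label the new boxes of $\qQ_\xps$. Consequently $\pP_\xps\big(\begin{psmallmatrix}u'\\v'\end{psmallmatrix}\big)=\pP_\xps(v')$, and $\qQ_\xps\big(\begin{psmallmatrix}u'\\v'\end{psmallmatrix}\big)$ is determined by $u'$ together with the sequence of column-row positions at which successive new boxes appear during the insertion of $v'$ into the $P$-tableau.

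For part $(i)$, the first equality is precisely Lemma~\ref{lem:Std_Q}. For the second equality, both sides have the same top row $\stdl(u)$, so by the observation above it suffices to show that the sequence of positions of new boxes produced when inserting $v$ (respectively $\std_\smx(v)$) into the $\smx$PS tableau coincide. This is immediate from Lemma~\ref{lemma:equivalent_insertions}: $v$ and $\std_\smx(v)$ have equivalent $\smx$PS insertions, which by definition means they produce the same shape evolution with the $j$-th symbol in the same column-row position at every step.

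For part $(ii)$, the structural observation gives
\[\pP_\xps\Big(\begin{psmallmatrix}\stdl(u)\\\std_\smx(v)\end{psmallmatrix}\Big)=\pP_\xps(\std_\smx(v))=\pP_\xps\Big(\begin{psmallmatrix}u\\\std_\smx(v)\end{psmallmatrix}\Big),\]
which settles the second equality. The first equality then reduces, via $\pP_\xps(w)=\pP_\xps(v)$, to $\Std_\smx(\pP_\xps(v))=\pP_\xps(\std_\smx(v))$, which is exactly Proposition~\ref{prop:std_and_Dstd}~(1).

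The only step requiring any real care is the structural observation, which I would justify by a short pass through Algorithm~\ref{alg:ArrayExtendedPS} noting that the bumping, box-creation, and choice-of-column $m$ in the $P$-construction depend only on the $v_j$ and the current $P$, never on the $u_j$. After that, the proposition is a clean concatenation of Lemma~\ref{lem:Std_Q}, Lemma~\ref{lemma:equivalent_insertions}, and Proposition~\ref{prop:std_and_Dstd}~(1); I do not expect any genuine obstacle.
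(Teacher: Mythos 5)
Your proposal is correct and follows essentially the same route as the paper: the first equality of $(i)$ is Lemma~\ref{lem:Std_Q}, the second follows from the equivalence of the $\smx$PS insertions of $v$ and $\std_\smx(v)$ (Lemma~\ref{lemma:equivalent_insertions}) combined with the observation that the $P$-component depends only on the bottom row while the $Q$-component is determined by the top row and the positions of the new boxes, and $(ii)$ reduces via that same observation to $\Std_\smx(\pP_\xps(v))=\pP_\xps(\std_\smx(v))$, which the paper cites as Remark~\ref{rmk:std_insertion} and you cite (equivalently) as Proposition~\ref{prop:std_and_Dstd}~(1). Your explicit statement of the structural observation up front is if anything slightly more careful than the paper's version of the same argument.
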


\begin{proof}
	The first equality of $(i)$ is just Lemma~\ref{lem:Std_Q}. The second 
equality
	follows from the fact that the first component of the pair obtained from the
	extended $\smx$\ps\ insertion of the $\smx$-two-rowed array 
$\begin{psmallmatrix} u \\ v
	\end{psmallmatrix}$ under Algorithm~\ref{alg:ArrayExtendedPS} is equal to the
	insertion of $v$ under Algorithm~\ref{alg:PSinsertion} together with the fact that
	$v$ and $\std_\smx(v)$ have equivalent $\smx$\ps\ insertions, by 
Lemma~\ref{lemma:equivalent_insertions}.
	
	Regarding $(ii)$, note that the first component of the pair obtained from the
	extended $\smx$\ps\ insertion of the $\smx$-two-rowed array 
$\begin{psmallmatrix} u \\ v
	\end{psmallmatrix}$ under Algorithm~\ref{alg:ArrayExtendedPS} is equal to the
	insertion of $v$ under Algorithm~\ref{alg:PSinsertion}. Thus from 
Remark~\ref{rmk:std_insertion} it follows that 
	$\Std_{\smx}\left(\pP_\xps(\begin{psmallmatrix} u \\ v
	\end{psmallmatrix})\right)=\pP_{\xps}\Big(\begin{psmallmatrix} u \\ \std_\smx(v)
	\end{psmallmatrix}\Big)=\pP_{\xps}\Big(\begin{psmallmatrix} \stdl(u) \\ \std_\smx(v)
	\end{psmallmatrix}\Big)$.
\end{proof}

If we relax the definition of standard stable pairs set over an alphabet so that
it allows the second component of the pair to be a standard \ps\ tableau over that
alphabet instead of just a recording tableau, then the bijection of
Proposition~\ref{prop:std_rob_schensted} is given by the mapping
\[w=\begin{psmallmatrix} u \\ v \end{psmallmatrix}\longmapsto \left(\pP(w),\qQ(w)\right),\]
where $\begin{psmallmatrix} u \\ v \end{psmallmatrix}$ is a standard two-rowed 
array, that is, $u=u_1\cdots u_k$ and $v$ are standard words of the same length 
such that $u_1<\dots<u_k$ and
$\left(\pP(w),\qQ(w)\right)$ is the insertion of $w$ under any of the versions of Algorithm~\ref{alg:ArrayExtendedPS}.

With this new definition of standard stable pairs set, for
$\smx\in\{\sml,\smr\}$, let the \emph{semistandard $\smx$\ps\ stable pairs set
over an alphabet $\bB$} be the set composed by the pairs of $\smx$\ps\ tableaux $(P,Q)$
of the same shape over $\bB$ such that $\big(\Std_{\smx}(P), 
\Std_{\smx}(Q)\big)$ is
in the stable pairs set over $\mathcal{C}(\bB)$.

\begin{thm}\label{thm:array_RSK}
	For $\smx\in\{\sml,\smr\}$, there is a one-to-one correspondence between
	$\smx$-two-rowed arrays over an alphabet and the pairs of $\smx$\ps\ 
tableaux
	from the semistandard $\smx$\ps\ stable pairs set over that alphabet. The
	one-to-one correspondence is given under the mapping
	\[w \mapsto (\pP_\xps(w),\qQ_\xps (w)).\]
\end{thm}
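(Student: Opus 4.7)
The plan is to reduce Theorem~\ref{thm:array_RSK} to the standard-two-rowed-array version of Proposition~\ref{prop:std_rob_schensted} (the relaxed version discussed immediately before the statement) via standardization of two-rowed arrays, mirroring the blueprint of the proof of Theorem~\ref{thm:RSK}. The bridge between the $\smx$-two-rowed-array setting and the standard setting is Proposition~\ref{prop:standardization_P_Q}, which yields
\[
\bigl(\Std_\smx(\pP_\xps(w)), \Std_\smx(\qQ_\xps(w))\bigr) = \bigl(\pP_\xps(\widetilde{w}), \qQ_\xps(\widetilde{w})\bigr),
\]
where $\widetilde{w} = \bigl(\begin{smallmatrix} \stdl(u) \\ \std_\smx(v) \end{smallmatrix}\bigr)$ is a standard two-rowed array over $\mathcal{C}(\bB)$.

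For the forward direction, given an $\smx$-two-rowed array $w$ over $\bB$, the top row $\stdl(u)$ of $\widetilde{w}$ is strictly increasing in $\mathcal{C}(\bB)$ because condition~(1) makes $u$ weakly increasing and left standardization breaks ties with strictly increasing indices. Thus $\widetilde{w}$ is a bona fide standard two-rowed array, and the standard-array version of Proposition~\ref{prop:std_rob_schensted} places $(\pP_\xps(\widetilde{w}), \qQ_\xps(\widetilde{w}))$ in the standard stable pairs set over $\mathcal{C}(\bB)$. By definition of the semistandard $\smx$\ps\ stable pairs set, this is exactly the statement that $(\pP_\xps(w), \qQ_\xps(w))$ lies in the semistandard $\smx$\ps\ stable pairs set over $\bB$. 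Injectivity follows from Remark~\ref{rmk:bijection_words}: distinct $\smx$-two-rowed arrays yield distinct standardizations and therefore distinct image pairs.

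For surjectivity, let $(P, Q)$ belong to the semistandard $\smx$\ps\ stable pairs set over $\bB$; by definition $(\Std_\smx(P), \Std_\smx(Q))$ lies in the standard stable pairs set over $\mathcal{C}(\bB)$, so there is a unique standard two-rowed array $\bigl(\begin{smallmatrix} u' \\ v' \end{smallmatrix}\bigr)$ inserting to it. Since the first component of Algorithm~\ref{alg:ArrayExtendedPS} agrees with Algorithm~\ref{alg:PSinsertion} applied to the bottom row, Lemma~\ref{rmk:std_surjection} applied to $\Std_\smx(P)$ forces $v' = \std_\smx(v)$ for some word $v$ over $\bB$. The top row $u'$ is strictly increasing in $\mathcal{C}(\bB)$, so $u := \Dstd(u')$ is weakly increasing in $\bB$, and equal underlying symbols in $u$ have strictly increasing $\mathcal{C}(\bB)$-indices, whence $u' = \stdl(u)$. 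Combining these identifications with Proposition~\ref{prop:standardization_P_Q} and applying $\Dstd()$ recovers $(P, Q) = (\pP_\xps(w), \qQ_\xps(w))$ with $w = \bigl(\begin{smallmatrix} u \\ v \end{smallmatrix}\bigr)$.

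The main obstacle is certifying that the recovered $\bigl(\begin{smallmatrix} u \\ v \end{smallmatrix}\bigr)$ is an $\smx$-two-rowed array, i.e.\ satisfies condition~(2) when $\smx = \sml$ or condition~(3) when $\smx = \smr$, not merely condition~(1). This is forced by the requirement that $Q$ is an $\smx$\ps\ tableau: if at some adjacent position with $u_i = u_{i+1}$ the ordering on $v_i, v_{i+1}$ were wrong, then the column-placement analysis carried out in the proof of Lemma~\ref{lem:word_array_ext_ins}, read backwards, would force either a column of $\qQ_\xps(w)$ to violate the strict (resp.\ weak) top-to-bottom decrease or the bottom row to violate the weak (resp.\ strict) left-to-right increase required of an $\smx$\ps\ tableau, contradicting the hypothesis on $Q$. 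With that check in place, the constructions from the forward and reverse directions are mutually inverse, completing the bijection.
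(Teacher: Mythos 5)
Your overall architecture matches the paper's: standardize the array, invoke the standard two-rowed version of Proposition~\ref{prop:std_rob_schensted}, transfer back via Proposition~\ref{prop:standardization_P_Q}, and use Lemma~\ref{rmk:std_surjection} to identify $v'$ as a standardized word. The gap sits exactly at the step you yourself flag as the main obstacle, and your proposed resolution of it fails. You claim that if the recovered array $\begin{psmallmatrix} u\\ v\end{psmallmatrix}$ violated condition~(2) (say in the $\sml$ case), then $\qQl\big(\begin{psmallmatrix} u\\ v\end{psmallmatrix}\big)$ would fail to be an \lps\ tableau, contradicting the hypothesis on $Q$. But the converse of Lemma~\ref{lem:word_array_ext_ins} is false: the array $\begin{psmallmatrix} 1&1&2&2\\ 4&7&5&3\end{psmallmatrix}$ violates condition~(2) at positions $3,4$, yet its extended \lps\ insertion yields $\pPl$ with columns $(4;3)$ and $(7;5)$ and $\qQl$ with columns $(2;1)$ and $(2;1)$ (written top-to-bottom), and this $\qQl$ is a perfectly good \lps\ tableau. (The genuine $l$-two-rowed array $\begin{psmallmatrix} 1&1&2&2\\ 4&7&3&5\end{psmallmatrix}$ inserts to the same pair, so on \emph{all} weakly-increasing-top-row arrays the map is not even injective.) There is also a circularity: at that point in the argument you have not yet established $\qQ_\xps\big(\begin{psmallmatrix} u\\ v\end{psmallmatrix}\big)=Q$, since that identification goes through Proposition~\ref{prop:standardization_P_Q}, which is only applicable once the array is known to be an $\smx$-two-rowed array.

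The paper derives the needed contradiction not from the $\smx$\ps-ness of $Q$ but from the structure of the \emph{standardized} recording tableau. The standard array $\begin{psmallmatrix} \stdl(u)\\ \std_\smx(v)\end{psmallmatrix}$ inserts to $\big(\Std_\smx(R),\Std_\smx(S)\big)$ by Proposition~\ref{prop:std_rob_schensted}. If $u_i=u_{i+1}=a$ but $v_i>v_{i+1}$ (in the $\sml$ case), then during this insertion the higher-indexed copy of $a$ lands on top of a column weakly to the left of the column that received the lower-indexed copy; yet in $\Std_\sml(S)$ the higher-indexed copy of $a$ must lie strictly to the right of the lower-indexed one, by the definition of left standardization of a tableau (and since the columns of the \lps\ tableau $S$ cannot repeat a symbol). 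That contradiction --- with $\Std_\smx(S)$, not with $Q$ --- is what forces condition~(2) (resp.\ condition~(3) in the $\smr$ case). In my example above, the insertion of $\begin{psmallmatrix} 1_11_22_12_2\\ 4_17_15_13_1\end{psmallmatrix}$ produces the recording tableau with columns $(2_2;1_1)$ and $(2_1;1_2)$, which is precisely not of the form $\Std_\sml(S)$; this is why that array can never arise from the reverse construction.
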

\begin{proof}
	Let $\bB$ be an alphabet and $\smx\in\{\sml,\smr\}$. Let $
	\begin{psmallmatrix} u \\ v \end{psmallmatrix}$ be	a 
$\smx$-two-rowed array over
	$\bB$, and thus $w=\begin{psmallmatrix} \stdl(u) \\ \std_\smx(v) 
\end{psmallmatrix}$ is a standard two-rowed array over $\mathcal{C}(\bB)$.
Therefore, using (the 
two-rowed version of) Proposition~\ref{prop:std_rob_schensted}, 
$\left(\pP_\xps\Big(\begin{psmallmatrix} \stdl(u) \\ \std_\smx(v)
	\end{psmallmatrix}\Big),\qQ_\xps\Big(\begin{psmallmatrix} \stdl(u) \\ \std_\smx(v)
	\end{psmallmatrix}\Big)\right)$ is a pair in 
	the standard stable pairs set
	over $\mathcal{C}(\bB)$. By Proposition~\ref{prop:standardization_P_Q}~$(ii)$,
	it follows that $\pP_\xps\Big(\begin{psmallmatrix} \stdl(u) \\ 
\std_\smx(v) 
\end{psmallmatrix}\Big)=\Std_{\smx}\left(\pP_\xps\left(\begin{psmallmatrix} u \\ 
v
	\end{psmallmatrix}\right)\right)$, and by $(i)$ of the same proposition 
it follows that
	$\qQ_\xps\Big(\begin{psmallmatrix} \stdl(u) \\ \std_\smx(v)
\end{psmallmatrix}\Big)=\Std_{\smx}\left(\qQ_\xps\left(\begin{psmallmatrix} u 
\\ v
	\end{psmallmatrix}\right)\right)$. So, we conclude that from the 
two-rowed array $\begin{psmallmatrix} u \\ v \end{psmallmatrix}$ we obtain a 
pair $\left(\pP_\xps\Big(\begin{psmallmatrix} u \\ v
	\end{psmallmatrix}\Big),\qQ_\xps\Big(\begin{psmallmatrix} u \\ v
	\end{psmallmatrix}\Big)\right)$ that is in the semistandard $\smx$\ps\ stable pairs set
	over $\bB$. Note that different two-rowed arrays lead to different 
pairs.
	
	The other way around, let $(R,S)$ be a pair of $\smx$\ps\ tableaux in the semistandard
	$\smx$\ps\ stable pairs set over $\bB$. Then, by definition
	$\left(\Std_{\smx}(R),\Std_{\smx}(S)\right)$ is in the standard stable 
pairs set
	over $\mathcal{C}(\bB)$. So, by
	(the two-rowed version of) Proposition~\ref{prop:std_rob_schensted}, 
there
	is a standard two-rowed array $\begin{psmallmatrix} u' \\ v' 
\end{psmallmatrix}$ over
	$\mathcal{C}(\bB)$ that maps to this pair.
	
	The word $u'$ is ordered strictly
	increasingly and so $u'=\stdl(u)$, for a word $u$ over $\bB$, obtained 
from $u'$ removing the indexes, being $u$ also ordered weakly 	increasingly. 

Also $v'=\std_\smx(v)$, for some word $v$ over $\bB$. Indeed, 
the first component of
	the pair obtained from the extended $\smx$\ps\ insertion of the
$\smx$-two-rowed
	array $\begin{psmallmatrix} u' \\ v' \end{psmallmatrix}$ under 
Algorithm~\ref{alg:ArrayExtendedPS}, is equal to the
	insertion of $v'$ under Algorithm~\ref{alg:PSinsertion}, so
	$\pP_\xps(v')=\Std_{\smx}(R)$. Thus, $v'$ has the claimed form, by 
Lemma~\ref{rmk:std_surjection}. 

Hence, 
$\left(\pP_\xps\Big(\begin{psmallmatrix} \stdl(u) \\ \std_\smx(v)
	\end{psmallmatrix}\Big),\qQ_\xps\Big(\begin{psmallmatrix} \stdl(u) \\ 
\std_\smx(v)  
\end{psmallmatrix}\Big)\right)=\left(\Std_{\smx}(R),\Std_{\smx}(S)\right)$. To 
see that $\begin{psmallmatrix} u \\ v \end{psmallmatrix}$ is a $\smx$-two-rowed 
array over $\bB$, it remains to check that condition $(2)$ holds in  case 
$\smx=l$, and condition $(3)$ holds in case $\smx=r$. In case $\smx=l$, if 
condition $(2)$ did not hold, then there would exist symbols $a_i,a_j$ from 
$u'$, with $i<j$ (and thus $a_i<a_j$), such that $a_j$ would be inserted either 
in the same column of $a_i$ on top of it, or in a column to the left of $a_i$. 
This contradicts $\qQ_\xps\Big(\begin{psmallmatrix} \stdl(u) \\ \std_\smx(v)
	\end{psmallmatrix}\Big)=\Std_{\smx}(S)$. With a similar reasoning 
we deduce the case $\smx=r$.
	
	Finally, by Proposition~\ref{prop:standardization_P_Q}, we get
$\Std_{\smx}\left(\pP_\xps(w)\right)= \Std_{\smx}(R)$ and 
$\Std_{\smx}(\qQ_\xps(w))=\Std_{\smx}(S)$, for $w=\begin{psmallmatrix} u \\ v 
\end{psmallmatrix}$. Thus, destandardizing the $\smx$\ps\ tableaux we deduce 
that	 the two-rowed array $w$ over $\bB$  maps to $(R,S)$.  
The result follows.
\end{proof}

In the following paragraphs we illustrate Theorem~\ref{thm:array_RSK} using the 
\lps\ reverse insertion.
Consider the following pair of \lps\ tableaux
\[\left(R,S\right)=\left(\begin{ytableau}
\none & 2 & \none & 3 & \none & \none\\
1 & 1 & 2 & 2 & 3 & 3
\end{ytableau},\
\begin{ytableau}
\none & 2 & \none & 3 & \none & \none\\
1 & 1 & 1 & 2 & 4 & 4
\end{ytableau}\right)\]
in the semistandard \lps\ stable pairs set over $\aA_4$.

By the previous theorem, $\left(R,S\right)=\left(\pPl(\begin{psmallmatrix} u \\ 
v \end{psmallmatrix}),\qQl(\begin{psmallmatrix} u \\ v 
\end{psmallmatrix})\right)$
for some $l$-two-rowed array $\begin{psmallmatrix} u \\ v \end{psmallmatrix}$. 

Applying left standardization to the pair of tableaux $(R,S)$
we obtain
\begin{align*}
  &\left(\Std_{\sml}(R), 
\Std_{\sml}(S)\right)=\left(\begin{ytableau}
\none & 2_1 & \none & 3_1 & \none & \none\\
1_1 & 1_2 & 2_2 & 2_3 & 3_2 & 3_3
\end{ytableau},\
\begin{ytableau}
\none & 2_1 & \none & 3_1 & \none & \none\\
1_1 & 1_2 & 1_3 & 2_2 & 4_1 & 4_2
\end{ytableau}\right).
\end{align*}
Then, applying the \lps\ reverse insertion to this pair, we obtain the 
$l$-two-rowed array
\begin{align*}
\begin{pmatrix} \stdl(u) \\ \stdl(v) \end{pmatrix}=
\begin{pmatrix} 1_1 & 1_2 & 1_3 & 2_1 & 2_2 & 3_1 & 4_1 & 4_2\\
1_1 & 2_1 & 2_2 & 1_2 & 3_1 & 2_3 & 3_2 & 3_3 \end{pmatrix}
\end{align*}
Applying destandardization to both $\stdl(u)$ and $\stdl(v)$, we obtain
\begin{align*}
\begin{pmatrix} u \\ v \end{pmatrix}=
\begin{pmatrix} 1 & 1 & 1 & 2 & 2 & 3 & 4 & 4\\
1 & 2 & 2 & 1 & 3 & 2 & 3 & 3 \end{pmatrix},
\end{align*}
which can be verified to insert to the pair $\left(P,Q\right)$ under
Algorithm~\ref{alg:ArrayExtendedPS}.

\section{Bell numbers and the number of \ps\ tableaux}
\label{sec:bellnumbers}

The $n$-th Bell number counts the number of different partitions of a set having $n$ distinct elements (sequence
A000110 in the OEIS).

Given $n\in \mathbb{N}$, as noticed in \cite{BL2005}, if we represent set
partitions of the totally ordered set $\aA_n$ by:
\begin{itemize}
	\item ordering decreasingly the sets that compose set partitions;
	\item ordering set partitions increasingly according to the minimum
	elements of their sets;
\end{itemize}
we establish a one-to-one correspondence between the partitions of the set
$\aA_n$ and the \ps\ tableaux over standard words of
$\mathfrak{S}(\aA_n)$. So, if we denote the number of \ps\ tableaux
over standard words of $\mathfrak{S}(\aA_n)$ by
\begin{equation*}
P(1,\ldots,1)=\abs{\{\pP(\sigma):\sigma\in\mathfrak{S}(\aA_n)\}},
\end{equation*}
where $(1,\ldots,1)$ is a sequence with $n$ symbols $1$ and is equal to the
evaluation of any standard word $\sigma\in \mathfrak{S}(\aA_n)$, then $P(1,
\ldots,1)$ is given by the $n$-th Bell number, $B_n$.

The Stirling number of second kind \cite[Chapter~4]{MR1411676}, denoted $S(n,k)$, is
the number of ways of partitioning $n$ different elements into $k$ distinct
sets ($k$ columns, in our setting). So, $S(n,k)$ gives the number of \ps\
tableaux over words of $\mathfrak{S}(\aA_n)$ that have exactly $k$
columns. As known, the $n$-th Bell number is given by the sum of the
Stirling numbers $S(n,k)$, where $k$ ranges over the set $\{1,\ldots, n\}$.

The goal in the remainder of this section is to follow a similar approach
for the case where words are taken over $\aA_n^*$. Our objective is to count
both the number of \lps\ and \rps\ tableaux over words of $\aA^*_n$
with a given fixed evaluation.


\subsection{Counting \ps\ tableaux over $\aA^*_n$}
In this subsection we present formulas to count both the number of \lps\ and
\rps\ tableaux over words of $\aA_n$ with a given fixed evaluation
$(m_1,\ldots,m_n)$, that we shall denote by $L(m_1,\ldots,m_n)$ and $R(m_1,
\ldots,m_n)$, respectively. In particular, we consider the case when each
symbol of $\aA_n$ occurs exactly once. That is, we consider the case
of standard words of $\mathfrak{S}(\aA_n)$, where we count $L(1,\ldots,1)$ and
$R(1,\ldots,1)$, respectively. These quantities are both equal to $P(1,\ldots,1)$
which, as we noted, is equal to the $n$-th Bell number, $B_n$.

Before we proceed, we observe that in
general, $L(m_1,m_2, \ldots, m_n)\leq P(1,\ldots,1)$ and 
$R(m_1,m_2,\ldots,m_n)\leq P(1,\ldots,1)$, for any sequence
$(1,\ldots,1)$ with $m_1+m_2+\dots+m_n$ symbols $1$.


Notice also that, the numbers $L(m_1,m_2, \ldots, m_n)$ and $R(m_1,m_2, \ldots, 
m_n)$ only depend on the non-zero entries of the evaluation sequence 
$(m_1,\ldots,m_n)$. Indeed, for any
$n\in \mathbb{N}$, the number of $\smx$\ps\ tableaux with evaluation
$(m_1, \ldots,m_{k-1},0,m_{k+1},\ldots, m_{n+1})$ is given by the number of
$x$ tableaux with evaluation $(m_1,\ldots,m_{k-1},m_{k+1},
\ldots,m_{n+1})$.   By convenience we assume that $(m_1,\ldots,m_n)\in
\mathbb{N}^n$.


It is easy to see that the length of the bottom row of an \rps\ tableaux with 
evaluation  $(m_1,m_2,\ldots,m_n)$ is greater or equal than
	$1$ and smaller or equal than $n$. As for the $L$ case we have the next 
result.

\begin{lem}
	The length of the bottom row of an \lps\ tableaux with evaluation 
$(m_1,m_2,\ldots,m_n)$ is greater or equal than $\max\{m_1,m_2,\ldots,m_n\}$ and 
less or equal than $m_1+m_2+\dots +m_n$. 
\end{lem}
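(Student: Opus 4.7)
The plan is to prove the two inequalities separately, both by elementary counting arguments based on the definition of an \lps\ tableau.

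For the upper bound, I would observe that the length of the bottom row equals the number of columns of the tableau (every column has at least one box, namely the bottom-most one, and each contributes exactly one symbol to the bottom row). Since the total number of boxes of an \lps\ tableau with evaluation $(m_1,\ldots,m_n)$ is $m_1+m_2+\cdots+m_n$, and each column contains at least one box, the number of columns is at most $m_1+m_2+\cdots+m_n$. This gives the desired upper bound.

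For the lower bound, the key observation is that in an \lps\ tableau the entries of each column are strictly decreasing from top to bottom, and hence every column contains pairwise distinct symbols. Therefore each column can contain \emph{at most} one occurrence of any fixed symbol $k\in\aA_n$. Since there are $m_k$ occurrences of the symbol $k$ in the tableau, they must be distributed over at least $m_k$ distinct columns, so the number of columns is at least $m_k$. As this holds for every $k\in\{1,\ldots,n\}$, the number of columns, and hence the length of the bottom row, is at least $\max\{m_1,m_2,\ldots,m_n\}$.

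Combining the two inequalities yields the claim. No step is a real obstacle: the argument relies only on the column-strictness condition built into the definition of an \lps\ tableau and on the trivial fact that the bottom row has one box per column. (One could additionally remark that both bounds are tight: the upper bound is attained by the one-row tableau obtained by writing the multiset of symbols in weakly increasing order, and the lower bound is attained, for instance, when $(m_1,\ldots,m_n)=(m,m,\ldots,m)$ by the tableau consisting of $m$ identical columns.)
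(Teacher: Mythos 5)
Your proof is correct and follows essentially the same route as the paper: the lower bound comes from the column-strictness forcing at most one occurrence of each symbol per column, and the upper bound from the fact that the number of columns cannot exceed the total number of boxes. The only cosmetic difference is that the paper phrases the upper bound via the tableau produced by inserting the weakly increasing word (which realises the maximum), whereas you count boxes directly; both amount to the same elementary observation.
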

\begin{proof}
	Let $R$ be an \lps\ tableau 
	with evaluation $(m_1,m_2,\ldots,m_n)$.
	The $m_1$ symbols $1$ of $R$ will all appear in the bottom row. Also, as any
	column of $R$ can contain no more than one occurrence of each
	symbol of $\aA_n$, the number of columns of $R$ is greater or equal than
	$\max\{m_1,\ldots,m_n\}$. The \lps\ insertion of the weakly
	increasing word with evaluation $(m_1,\dots,m_n)$ produces a tableau
	with $m_1+\dots +m_n$ columns. Thus any \lps\ tableau with evaluation
	$(m_1,\dots,m_n)$ has at most $m_1+\dots 
+m_n$ columns.
%
\end{proof}

Throughout the remainder of this text, we also assume the
convention that for $k>m$ the binomial coefficient $\binom{m}{k}$ is $0$.

The idea is to start by presenting formulas to count both the number of \lps\
tableaux and the number of \rps\ tableaux with a given fixed
evaluation and a fixed bottom row. So, given sequences $(m_1,m_2,\ldots,m_n)\in
\mathbb{N}^n$ and $(j_2,j_3,\ldots,j_n)\in \mathbb{N}_0^{n-1}$, let
\begin{align*}
\begin{bmatrix}
m_1 &  m_2 & \cdots & m_n \\
0   &  j_2 & \cdots & j_n
\end{bmatrix}^\bell=\binom{m_1}{m_2-j_2} \cdots \binom{m_1+ j_2 + \cdots +
	j_{n-1}}{m_n-j_n}
\end{align*}

\begin{lem}\label{prop:number_lps_tableaux_fixed_content}
	The number of \lps\ tableaux having evaluation $(m_1,\ldots,m_n)\in
	\mathbb{N}^n$ whose \lps\ bottom row has evaluation \allowbreak $(m_1,
	 j_2,\ldots, j_n)\in\mathbb{N}\times\mathbb{N}_0^{n-1}$,
	is given by the formula
	\[  \begin{bmatrix}
	m_1 &  m_2 & \cdots & m_n \\
	0   &  j_2 & \cdots & j_n
	\end{bmatrix}^\bell.\]
\end{lem}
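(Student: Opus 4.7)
The plan is a direct count by iteratively placing the non-bottom-row content. Since the bottom row of an \lps\ tableau is weakly increasing, prescribing its evaluation $(m_1, j_2, \ldots, j_n)$ forces it to be the word $1^{m_1} 2^{j_2} \cdots n^{j_n}$. This fixes the number of columns $N = m_1 + j_2 + \cdots + j_n$ and their bottom symbols $b_1 \leq \cdots \leq b_N$ (writing $j_1 := m_1$, there are $j_s$ columns with bottom $s$), and what remains is to insert $m_k - j_k$ copies of each symbol $k \in \{2,\ldots,n\}$ into cells above the bottom row, subject to strict decrease top-to-bottom in each column.

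I would place these remaining copies in increasing order of $k = 2, 3, \ldots, n$. At step $k$, the key observation is a dichotomy on the columns. A column with $b_i \geq k$ cannot contain any of the copies being placed, because its bottom is already $\geq k$ and any cell above holds a symbol strictly greater than $b_i$, hence $> k$. A column with $b_i < k$ can receive exactly one $k$ on top, because only symbols $\leq k-1$ have been inserted above the bottom row so far, so its current top lies in $\{b_i, b_i+1, \ldots, k-1\}$, all $< k$. Consequently, the set of available columns at step $k$ is exactly the set of columns with $b_i < k$, of cardinality $m_1 + j_2 + \cdots + j_{k-1}$, and crucially this cardinality is independent of the choices made at earlier steps. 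One then picks which $m_k - j_k$ of these columns receive a $k$, giving $\binom{m_1 + j_2 + \cdots + j_{k-1}}{m_k - j_k}$ options.

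Each sequence of admissible choices produces a distinct \lps\ tableau with the prescribed evaluation and bottom-row evaluation, and conversely every such tableau is reconstructed from the unique sequence obtained by letting step $k$ record the set of columns that contain a copy of $k$. Multiplying over $k = 2, \ldots, n$ yields
\[
\prod_{k=2}^{n} \binom{m_1 + j_2 + \cdots + j_{k-1}}{m_k - j_k}
= \begin{bmatrix} m_1 & m_2 & \cdots & m_n \\ 0 & j_2 & \cdots & j_n \end{bmatrix}^\bell,
\]
as required. The main point that requires care is the independence of the count at step $k$ from the placements made at steps $2,\ldots,k-1$, which rests on the dichotomy above: only symbols $\leq k-1$ have been inserted when step $k$ begins, so no column's status flips from ``available'' to ``unavailable'' as a result of earlier choices.
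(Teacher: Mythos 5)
Your proposal is correct and follows essentially the same route as the paper: both arguments fix the bottom row as $1^{m_1}2^{j_2}\cdots n^{j_n}$ and then, for each symbol $k$ in increasing order, observe that its $m_k-j_k$ non-bottom-row copies must go (one per column) into the $m_1+j_2+\cdots+j_{k-1}$ columns whose bottom symbol is less than $k$, yielding the product of binomial coefficients. Your version merely spells out more explicitly the independence of the choices across values of $k$ and the bijection between choice sequences and tableaux.
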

\begin{proof}
	These tableaux have $j_2$ symbols $2$ in the bottom row and since any
	\lps\ column cannot contain more than one symbol from $\aA_n$, by the order of
	\lps\ columns and \lps\ bottom rows, the remaining $m_2-j_2$ symbols $2$ are in
	the columns (in the second row) that contain the symbol $1$. Thus, $m_2-j_2$ is
	less or equal than $m_1$ and there are $\binom{m_1}{m_2-j_2}$ possibilities to
	place those $2$'s.

	More generally, such tableaux will have $j_a$ symbols $a$ in the
	bottom row, and so the remaining $m_a-j_a$ symbols $a$ cannot be inserted in
	columns to the right of the columns containing symbols $a$. Therefore, they
	must be placed in columns that have symbols from $\aA_{a-1}$. There are $m_1+
	j_2+\cdots + j_{a-1}$ such columns.
	Thus $m_a-j_a$ is less or equal than $m_1+j_2+\cdots+j_{a-1}$ and there are $\binom{m_1+j_2+\cdots+j_{a-1}}{m_a-j_a}$ possibilities to place those symbols.
	The result follows.
\end{proof}

From the previous proof we deduce that for \lps\ tableaux with
evaluation $(m_1,\ldots,m_n)$ whose bottom row has evaluation 
$(m_1,j_2,\ldots,
j_n)$, for any $a\in\aA_n \setminus \{1\}$, we have $0\leq m_a-j_a\leq m_1+\sum_{b=2}^{a-1}
j_b$.


Regarding the \rps\ case, for any sequences $(m_1,m_2,\ldots,m_n)\in \mathbb{
	N}^n$ and $(j_1,j_2,\ldots, j_n)\in \{0,1\}^{n}$, let
\begin{align*}
\begin{bmatrix}
&  m_2 & \cdots & m_n \\
j_1   &  j_2 & \cdots & j_n
\end{bmatrix}^\belr=\binom{m_2+j_1}{m_2-j_2} \cdots \binom{m_n+ (j_1+j_2 +
	\cdots+j_{n-1})}{m_n-j_n}.
\end{align*}

\begin{lem}\label{prop:number_rps_tableaux_fixed_content}
	The number of \rps\ tableaux with evaluation
	$(m_1,\ldots,m_n)\in\mathbb{N}^n$ and whose \rps\ bottom row has evaluation
	the $0$-$1$ sequence $(1,j_2,\ldots, j_n)$, is given by the formula
	\begin{align*}
	\begin{bmatrix}
	& m_2 & \cdots & m_n \\
	0 &  j_2 & \cdots & j_n
	\end{bmatrix}^\belr.
	\end{align*}
\end{lem}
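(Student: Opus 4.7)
My plan is to mirror the proof of Lemma~\ref{prop:number_lps_tableaux_fixed_content}, adapting it to the weaker column constraint (weakly rather than strictly decreasing from top to bottom) of \rps\ tableaux. First I would note that the bottom row of any \rps\ tableau, being strictly increasing left-to-right, is uniquely determined by its evaluation, so fixing the bottom-row evaluation $(1,j_2,\ldots,j_n)$ fixes the bottom row itself; in particular the leftmost column has bottom $1$. Next, because each column is weakly decreasing from top to bottom, the entries above the bottom of a column with bottom $b$ form a multiset of symbols $\geq b$, and the vertical ordering is then forced. Consequently an \rps\ tableau with the prescribed data is specified uniquely by non-negative integers $x_{i,a}$, where $i$ ranges over the columns with bottom $b_i\leq a$ and $a\in\{1,\ldots,n\}$, recording how many extra copies of $a$ sit above the bottom in column $i$.

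For each $a$ the constraint is $\sum_i x_{i,a}=m_a-j_a$, and the constraints for different symbols decouple, because the multiset of each column can be prescribed one symbol at a time. The case $a=1$ contributes only a factor of $1$, since $j_1=1$ is forced and the $m_1-1$ extra $1$'s all have to sit in the single column with bottom $1$. For $a\in\{2,\ldots,n\}$ the number of available columns equals $T_a:=1+j_2+\cdots+j_a$, and a stars-and-bars count gives $\binom{(m_a-j_a)+T_a-1}{T_a-1}$ distributions. A short algebraic manipulation, using $\binom{p}{q}=\binom{p}{p-q}$, rewrites this as
\[ \binom{m_a+j_2+\cdots+j_{a-1}}{m_a-j_a}, \]
which is precisely the factor appearing in the $\belr$-bracket symbol when its lower-left entry is read as $0$. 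Multiplying over $a=2,\ldots,n$ then produces the claimed formula.

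The main conceptual departure from the \lps\ case is that in an \rps\ column a symbol may repeat, so the count of copies of $a$ in each eligible column is an arbitrary non-negative integer and stars-and-bars replaces the plain ``choose $m_a-j_a$ of $m_1+j_2+\cdots+j_{a-1}$ columns'' used in Lemma~\ref{prop:number_lps_tableaux_fixed_content}. I expect the only mildly delicate point to be the independence of the placements across different values of $a$, which I would justify by stressing that once the bottom row is fixed, specifying a tableau is the same as specifying, for each column and each eligible symbol, a non-negative number of occurrences, with the vertical order then dictated by the weakly-decreasing condition.
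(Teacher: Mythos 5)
Your proposal is correct and follows essentially the same route as the paper's proof: fix the (strictly increasing, hence evaluation-determined) bottom row, observe that all remaining copies of a symbol $a$ must lie in the columns whose bottom entry is at most $a$, and count the distributions by stars and bars, obtaining the factor $\binom{m_a+j_2+\cdots+j_{a-1}}{m_a-j_a}$ for each $a$ and multiplying over $a$. Your explicit remarks that the vertical order within a column is forced and that the counts for distinct symbols decouple are slightly more detailed than the paper's write-up, but the argument is the same.
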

\begin{proof}
	First note that the number of ways to distribute $k$ symbols into $l$ columns
	can be calculated using the ``stars and bars'' method. That is, the $k$
	symbols can be viewed as $k$ stars which we want to separate into $l$ groups
	by using $l-1$ bars in between. So, this number is given by the number of
	ways of choosing $l-1$ positions from $k+l-1$ spaces, that is, $\binom{k+l-1}{l-1}=\binom{k+l-1}{k}$.

	All the symbols $1$ occur in the bottom part of the first column of these
	tableaux. Moreover, the first column is followed by $j_2\in\{0,1\}$ 
columns
	whose bottom symbol is $2$. Considering the order of \rps\ columns and \rps\
	bottom rows, the remaining $m_2-j_2$ symbols $2$ have to be distributed over
	their first $j_1+j_2$ columns. By the first paragraph there are
	\begin{equation*}
	\binom{m_2-j_2+ j_1+j_2-1}{m_2-j_2}=
	\binom{m_2}{m_2-j_2}
	\end{equation*}
	such possibilities.
	
	In general, these tableaux have $j_a\in\{0,1\}$ columns whose bottom 
symbol
	is $a$ and thus the remaining $m_a-j_a$ symbols have to be inserted over the
	first  $j_1+j_2+\cdots+j_a$ columns. By the first paragraph, there are
	\begin{align*}
	\binom{m_a-j_a+(j_1+j_2+\cdots+j_a)-1}{m_a-j_a}=
	\binom{m_a+(j_2+\cdots+j_{a-1})}{m_a-j_a}
	\end{align*}
	possibilities to place those symbols. The result follows.
\end{proof}

\begin{lem}\label{prop:invariance_rps}
	The number $R(m_1,\ldots,m_n)$ of \rps\ tableaux with evaluation $(m_1,\ldots,m_n)\in \mathbb{N}^n$ is independent of the choice of $m_1$.
	Also, both the numbers $L(m)$ and $R(m)$ of \lps\ and \rps\
	tableaux with evaluation $(m)$ is equal to 1.
\end{lem}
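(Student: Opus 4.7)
The plan is to deduce both assertions from the counting formula for \rps\ tableaux with a fixed bottom-row evaluation (Lemma~\ref{prop:number_rps_tableaux_fixed_content}) together with a direct analysis of tableaux whose content is a single symbol.

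For the independence of $R(m_1,\ldots,m_n)$ from $m_1$, first I would observe that every \rps\ tableau with evaluation $(m_1,\ldots,m_n)$ must have exactly one column whose bottom entry is $1$ (since the bottom row is strictly increasing, no two columns can start with the same $1$, and since $m_1\geq 1$ there is at least one). Hence the possible bottom-row evaluations are exactly the $0$-$1$ sequences of the form $(1,j_2,\ldots,j_n)$ with $(j_2,\ldots,j_n)\in\{0,1\}^{n-1}$. Summing Lemma~\ref{prop:number_rps_tableaux_fixed_content} over these,
\begin{equation*}
R(m_1,\ldots,m_n)=\sum_{(j_2,\ldots,j_n)\in\{0,1\}^{n-1}}\begin{bmatrix}
& m_2 & \cdots & m_n \\
0 &  j_2 & \cdots & j_n
\end{bmatrix}^\belr.
\end{equation*}
The crucial point is that the bracket expression $\binom{m_2+j_1}{m_2-j_2}\cdots\binom{m_n+(j_1+\cdots+j_{n-1})}{m_n-j_n}$ contains no factor involving $m_1$; with $j_1=0$, every binomial depends only on $m_2,\ldots,m_n$ and on the choice of $(j_2,\ldots,j_n)$. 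Therefore the whole sum is independent of $m_1$, as claimed.

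For the second part, I would argue by direct inspection. For $R(m)$: since the only symbol appearing is $1$, and the bottom row of an \rps\ tableau is strictly increasing, there can be only one column, inside which the $m$ copies of $1$ form a weakly-decreasing sequence (trivially satisfied); hence the unique such tableau is a single vertical column of $m$ ones, so $R(m)=1$. For $L(m)$: columns of an \lps\ tableau are strictly decreasing from top to bottom, so no column may contain two $1$'s; thus all $m$ copies of $1$ lie in distinct columns, all in the bottom row, which is weakly increasing (again trivially satisfied). This gives a single row of $m$ ones as the unique \lps\ tableau, so $L(m)=1$.

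There is no significant obstacle in either step: part~(1) is essentially a syntactic observation on the formula of Lemma~\ref{prop:number_rps_tableaux_fixed_content}, and part~(2) is a small case analysis using the defining order conditions on \lps\ and \rps\ tableaux. The only point that warrants explicit mention is the justification that $j_1=1$ is forced in the decomposition, so that the sum ranges only over $(j_2,\ldots,j_n)\in\{0,1\}^{n-1}$.
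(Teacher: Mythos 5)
Your proof is correct and follows essentially the same route as the paper: the key observation in both is that all $m_1$ copies of the symbol $1$ are forced into (the bottom of) the first column of an \rps\ tableau, so $j_1=1$ always, and the counting formula of Lemma~\ref{prop:number_rps_tableaux_fixed_content} summed over $(j_2,\ldots,j_n)\in\{0,1\}^{n-1}$ involves no occurrence of $m_1$. Your version merely spells out the details (including the case analysis for $L(m)=R(m)=1$) that the paper's two-sentence proof leaves implicit.
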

\begin{proof}
	The first part of the proposition follows from the fact that all the
	$m_1$ symbols $1$ are in the bottom part of the first column and
	therefore $j_1=1$ for any $m_1\in \mathbb{N}$. The second part of the
	proposition is immediate.
\end{proof}

\begin{prop}\label{prop:number_Bell_classes}
	The numbers $L(m_1,\ldots,m_n)$ and $R(m_1,\ldots,m_n)$ of, respectively,
	\lps\ tableaux and \rps\ tableaux with evaluation
	$(m_1,\ldots,m_n)$, are given by
	\begin{align*}
	&L(m_1,\ldots,m_n)=\displaystyle \sum_{(0,\ldots,0)\leq (j_2,\ldots,
		j_n)\leq (m_2,\ldots,m_n)}
	\begin{bmatrix}
	m_1 &  m_2 & \cdots & m_n \\
	0   &  j_2 & \cdots & j_n
	\end{bmatrix}^\bell
	\end{align*}
	and
	\begin{align*}
	R(m_1,\ldots,m_n)=\displaystyle \sum_{(0,\ldots,0)\leq (j_2,\ldots,
		j_n)\leq (1,\ldots,1)}
	\begin{bmatrix}
	& m_2 & \cdots & m_n \\
	0 &  j_2 & \cdots & j_n
	\end{bmatrix}^\belr.
	\end{align*}
\end{prop}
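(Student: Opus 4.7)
The plan is to decompose the counting problem according to the evaluation of the bottom row of a tableau, and then invoke the two preceding lemmas to count each class.

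For the \lps\ case, I would first observe that any \lps\ tableau $T$ with evaluation $(m_1,\ldots,m_n)$ has its bottom row uniquely determined by the tableau, with an evaluation of the form $(m_1,j_2,\ldots,j_n)$ where $0\leq j_a\leq m_a$ for each $a\in\{2,\ldots,n\}$. The first coordinate must be exactly $m_1$: since the columns of an \lps\ tableau are strictly decreasing top-to-bottom, every occurrence of the symbol~$1$ must appear in the bottom row. Hence the set of \lps\ tableaux with evaluation $(m_1,\ldots,m_n)$ partitions as a disjoint union, indexed by the bottom-row evaluations $(j_2,\ldots,j_n)$ with $(0,\ldots,0)\leq(j_2,\ldots,j_n)\leq(m_2,\ldots,m_n)$. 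Lemma~\ref{prop:number_lps_tableaux_fixed_content} gives the cardinality of each class, and summing yields the stated formula for $L(m_1,\ldots,m_n)$.

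For the \rps\ case the argument is analogous, but the bottom row of an \rps\ tableau is strictly increasing, so each symbol appears at most once in it; hence $j_a\in\{0,1\}$ for every $a$. Moreover, since the symbols $1$ all occupy the bottom part of the first column (again by the strictly decreasing column order, and since $1$ is the minimum symbol), implicitly $j_1=1$, which explains the leading entry $0$ in the bracket notation introduced before Lemma~\ref{prop:number_rps_tableaux_fixed_content}. Partitioning the \rps\ tableaux with evaluation $(m_1,\ldots,m_n)$ according to the bottom-row evaluation $(1,j_2,\ldots,j_n)\in\{1\}\times\{0,1\}^{n-1}$ and applying Lemma~\ref{prop:number_rps_tableaux_fixed_content} to each class gives the second formula after summation.

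The only subtlety is ensuring that the indexing ranges given in the statement are exactly the right ones, i.e.\ that no further constraints on the tuples $(j_2,\ldots,j_n)$ are being silently dropped. In the \lps\ case, the proof of Lemma~\ref{prop:number_lps_tableaux_fixed_content} shows that we additionally need $m_a-j_a\leq m_1+j_2+\cdots+j_{a-1}$; for tuples violating this inequality, the corresponding bracket contains a binomial coefficient $\binom{N}{k}$ with $k>N$, which is $0$ by the convention adopted earlier in the section. Thus such tuples contribute zero and can be included harmlessly in the summation range $(0,\ldots,0)\leq(j_2,\ldots,j_n)\leq(m_2,\ldots,m_n)$. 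The \rps\ case requires no such concern since $j_a\in\{0,1\}$ already exhausts the possibilities. Beyond this bookkeeping, the proof is essentially a direct application of the two preceding lemmas.
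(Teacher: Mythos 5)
Your proof is correct and follows exactly the route the paper intends: the paper states this proposition without proof, treating it as the immediate consequence of summing Lemmas~\ref{prop:number_lps_tableaux_fixed_content} and~\ref{prop:number_rps_tableaux_fixed_content} over all possible bottom-row evaluations, which is precisely your decomposition. Your extra care about why the first coordinate of the bottom-row evaluation is forced ($m_1$ in the \lps\ case, $j_1=1$ in the \rps\ case) and about the tuples violating $m_a-j_a\leq m_1+j_2+\cdots+j_{a-1}$ contributing zero via the convention $\binom{m}{k}=0$ for $k>m$ is exactly the bookkeeping the paper leaves implicit.
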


\begin{prop}\label{prop:recur_lps_rps}
	The numbers $L(m_1,\ldots,m_n)$ and $R(m_1,\ldots,m_n)$ of,
	respectively, \lps\ and \rps\ tableaux with
	evaluation $(m_1,\ldots, m_n)$ are recursively obtained in the
	following way
	\begin{equation*}
	L(m_1,m_2,\ldots,m_{n})=
	\sum_{0 \leq j_2\leq m_2}\binom{m_1}{m_2-j_2}L(m_1+j_2,m_3,\ldots,
	m_{n}).
	\end{equation*}
	and
	\begin{align*}
	R(m_1,m_2,\ldots,m_{n})&= R(m_2,m_3,\ldots,m_n)+\\
	&+\sum_{(0,\ldots,0) \leq (j_3,\ldots,j_n)\leq (1,\ldots,1)}m_2
	\begin{bmatrix}
	& m_3 & \cdots & m_n \\
	1 &  j_3 & \cdots & j_n
	\end{bmatrix}^\belr,
	\end{align*}
	where $L(m)=1=R(m)$ and $R(m,n)=\binom{n}{n}+\binom{n}{n-1}=1+n$,
	for all $m,n\in \mathbb{N}$.
\end{prop}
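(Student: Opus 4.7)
The plan is to derive both recurrences from Proposition~\ref{prop:number_Bell_classes} by splitting its sum on the value of $j_2$ (the number of symbols $2$ lying in the bottom row) and identifying each resulting inner sum as a smaller instance of the very same proposition. The base cases are immediate: $L(m)=R(m)=1$ merely records that the only tableau with a single symbol is the bottom row $1^m$, while $R(m,n)=1+n$ is verified directly, since an \rps\ tableau of evaluation $(m,n)$ is determined by the number $k\in\{0,\ldots,n\}$ of $2$'s that sit strictly to the right of the first column.

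For the \lps\ recurrence, I begin with
\[
L(m_1,\ldots,m_n) = \sum_{(j_2,\ldots,j_n)} \binom{m_1}{m_2-j_2}\binom{m_1+j_2}{m_3-j_3}\cdots\binom{m_1+j_2+\cdots+j_{n-1}}{m_n-j_n},
\]
and pull the leading binomial out of the summation to obtain
\[
L(m_1,\ldots,m_n) = \sum_{0\le j_2\le m_2} \binom{m_1}{m_2-j_2} \sum_{(j_3,\ldots,j_n)} \binom{m_1+j_2}{m_3-j_3}\cdots\binom{(m_1+j_2)+j_3+\cdots+j_{n-1}}{m_n-j_n}.
\]
The inner sum is precisely the right-hand side of Proposition~\ref{prop:number_Bell_classes} applied to the $(n-1)$-tuple $(m_1+j_2,m_3,\ldots,m_n)$, hence equals $L(m_1+j_2,m_3,\ldots,m_n)$, giving the stated identity.

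For the \rps\ recurrence, since $(j_2,\ldots,j_n)\in\{0,1\}^{n-1}$, splitting on $j_2$ produces exactly two contributions. When $j_2=0$ the leading factor $\binom{m_2}{m_2}=1$ collapses and each subsequent binomial $\binom{m_k+(j_3+\cdots+j_{k-1})}{m_k-j_k}$ matches the $(k-2)$-nd entry of $\begin{bmatrix} & m_3 & \cdots & m_n \\ 0 & j_3 & \cdots & j_n\end{bmatrix}^\belr$, so summing over $(j_3,\ldots,j_n)\in\{0,1\}^{n-2}$ yields $R(m_2,\ldots,m_n)$ by Proposition~\ref{prop:number_Bell_classes} applied to the shorter evaluation. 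When $j_2=1$ the leading factor is $\binom{m_2}{m_2-1}=m_2$, and the shift $j_2=1$ propagates through every remaining binomial, turning $\binom{m_k+(1+j_3+\cdots+j_{k-1})}{m_k-j_k}$ into the $(k-2)$-nd entry of $\begin{bmatrix} & m_3 & \cdots & m_n \\ 1 & j_3 & \cdots & j_n\end{bmatrix}^\belr$; pulling $m_2$ out of the sum gives the second summand of the recurrence. Specialising to $n=2$, the empty-sum/empty-product convention yields $R(m_1,m_2)=R(m_2)+m_2\cdot 1=1+m_2$, matching the stated formula.

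I do not anticipate any real obstacle: the only delicate point is index bookkeeping. What must be carefully checked is that the shift $m_1\mapsto m_1+j_2$ in the \lps\ case, and $j_1=0\mapsto j_1=1$ in the \rps\ case, propagates consistently through every factor of the bracket, and that the summation ranges for $(j_3,\ldots,j_n)$ coincide with those prescribed by Proposition~\ref{prop:number_Bell_classes} applied to the reduced evaluation $(m_1+j_2,m_3,\ldots,m_n)$ or $(m_2,\ldots,m_n)$ respectively.
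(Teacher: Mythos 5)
Your proposal is correct and follows essentially the same route as the paper: both proofs rest on the factorizations $\begin{bmatrix} m_1 & m_2 & \cdots & m_n \\ 0 & j_2 & \cdots & j_n \end{bmatrix}^\bell=\binom{m_1}{m_2-j_2}\begin{bmatrix} m_1+j_2 & m_3 & \cdots & m_n \\ 0 & j_3 & \cdots & j_n \end{bmatrix}^\bell$ and $\begin{bmatrix} & m_2 & \cdots & m_n \\ j & j_2 & \cdots & j_n \end{bmatrix}^\belr=\binom{m_2+j}{m_2-j_2}\begin{bmatrix} & m_3 & \cdots & m_n \\ j+j_2 & j_3 & \cdots & j_n \end{bmatrix}^\belr$, followed by splitting the sum of Proposition~\ref{prop:number_Bell_classes} over $j_2$ and recognizing the inner sums as $L(m_1+j_2,m_3,\ldots,m_n)$ and $R(m_2,\ldots,m_n)$ respectively. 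Your index bookkeeping checks out, and your direct combinatorial verification of $R(m,n)=1+n$ is a harmless variant of the paper's computation of the same value from the closed formula.
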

\begin{proof}
	Note that
	\begin{equation*}
	\begin{bmatrix}
	m_1 &  m_2 & \cdots & m_n \\
	0   &  j_2 & \cdots & j_n
	\end{bmatrix}^\bell=\binom{m_1}{m_2-j_2}\begin{bmatrix}
	m_1+j_2 &  m_3 & \cdots & m_n \\
	0   &  j_3 & \cdots & j_n
	\end{bmatrix}^\bell
	\end{equation*}
	and for $j\in\mathbb{N}_0$
	\begin{equation*}
	\begin{bmatrix}
	& m_2 & \cdots & m_n \\
	j &  j_2 & \cdots & j_n
	\end{bmatrix}^\belr=\binom{m_2+j}{m_2-j_2}\begin{bmatrix}
	& m_3 & \cdots & m_n \\
	j+j_2 &  j_3 & \cdots & j_n
	\end{bmatrix}^\belr.
	\end{equation*}
	The result now follows from Lemma~\ref{prop:invariance_rps}
	and Proposition~\ref{prop:number_Bell_classes}, observing that from
	Proposition~\ref{prop:number_Bell_classes} we have
	\[R(m_1,m_2)=\displaystyle \sum_{0\leq j_2\leq 1}
	\begin{bmatrix}
	& m_2\\
	0 &  j_2
	\end{bmatrix}^\belr=\binom{m_2}{m_2}+\binom{m_2}{m_2-1}=1+m_2.\]
\end{proof}

As an example, we use the above formula to compute the number of \lps\ tableaux
with evaluation $(2,1,2)$
\begin{align*}
L(2,1,2)&=\sum_{j_2=0}^{1}\binom{2}{1-j_2}L(2+j_2,2)=\binom{2}{1}\cdot L(2,2)+\binom{2}{0}\cdot L(3,2)\\
&=2\cdot\left(\binom{2}{2}+\binom{2}{1}+\binom{2}{0}\right)+
1\cdot\left(\binom{3}{2}+\binom{3}{1}+\binom{3}{0}\right)\\
& =7+8=15
\end{align*}
Indeed,  $\binom{2}{0}L(3,2)$ is obtained when $j_2=1$ and therefore is
the number of such \lps\ tableaux with the symbol
$2$ on the bottom row:
\begin{alignat*}{5}
\ytableausetup
{mathmode, boxsize=1em, aligntableaux=center}
&j_3=2\ \ &\rightarrow \ \ &{\color{gray}\binom{3}{0}} \ \ &\rightarrow \ \
&\begin{ytableau}
1 & 1 & 2 & 3 & 3
\end{ytableau}\\
&j_3=1\ \ &\rightarrow \ \ &{\color{gray}\binom{3}{1}} \ \ &\rightarrow \ \
&\begin{ytableau}
3\\
1 & 1 & 2 & 3
\end{ytableau}\, ,\
\begin{ytableau}
\none & 3\\
1 & 1 & 2 & 3
\end{ytableau}\, ,\
\begin{ytableau}
\none & \none & 3\\
1 & 1 & 2 & 3
\end{ytableau}\\
&j_3=0\ \ &\rightarrow \ \ &{\color{gray}\binom{3}{2}} \ \ &\rightarrow \ \
&\begin{ytableau}
3 & 3\\
1 & 1 & 2
\end{ytableau}\, ,\
\begin{ytableau}
3 & \none & 3\\
1 & 1 & 2
\end{ytableau}\, ,\
\begin{ytableau}
\none & 3 & 3\\
1 & 1 & 2
\end{ytableau}
\end{alignat*}
The number $\binom{2}{1}L(3,2)$ counts those \lps\ tableaux where the
symbol $2$ is on top of the symbols $1$ on the second row:
\begin{alignat*}{5}
\ytableausetup
{mathmode, boxsize=1em, aligntableaux=center}
&j_3=2\ \ &\rightarrow \ \ &{\color{gray}\binom{2}{1}\cdot\binom{2}{0}} \ \
&\rightarrow \ \ &\begin{ytableau}
2\\
1 & 1 & 3 & 3
\end{ytableau}\, ,\
\begin{ytableau}
\none & 2\\
1 & 1 & 3 & 3
\end{ytableau}\\
&j_3=1\ \ &\rightarrow \ \ &{\color{gray}\binom{2}{1}\cdot\binom{2}{1}} \ \
&\rightarrow \ \ &\begin{ytableau}
3\\
2\\
1 & 1 & 3
\end{ytableau}\, ,\
\begin{ytableau}
2 & 3\\
1 & 1 & 3
\end{ytableau}\, ,\
\begin{ytableau}
3 & 2\\
1 & 1 & 3
\end{ytableau}\, ,\
\begin{ytableau}
\none & 3\\
\none & 2\\
1 & 1 & 3
\end{ytableau}\\
&j_3=0\ \ &\rightarrow \ \ &{\color{gray}\binom{2}{1}\cdot\binom{2}{2}} \ \
&\rightarrow \ \ &\begin{ytableau}
3\\
2 & 3\\
1 & 1
\end{ytableau}\, ,\
\begin{ytableau}
\none & 3\\
3 & 2\\
1 & 1
\end{ytableau}
\end{alignat*}

Analogously, the number of \rps\ tableaux with evaluation $(2,1,2)$ is given by:
\begin{align*}
R(2,1,2)&=R(1,2)+\sum_{j_3=0}^{1}1\begin{bmatrix}
& 2 \\
1 & j_3
\end{bmatrix}^\belr\\
&=\sum_{j_3=0}^{1}\binom{2}{2-j_3}+\left(\begin{bmatrix}
& 2 \\
1 & 0
\end{bmatrix}^\belr+\begin{bmatrix}
& 2 \\
1 & 1
\end{bmatrix}^\belr\right)\\
&=\left(\binom{2}{2}+\binom{2}{1}\right)+
\left(\binom{3}{2}+\binom{3}{1}\right)=3+6=9
\end{align*}
The number $R(1,2)$ counts the number of those \rps\ tableaux when
$j_2=0$, that is, whenever the symbol $2$ is on top  of the symbols $1$ in the
first column:
\begin{alignat*}{5}
\ytableausetup
{mathmode, boxsize=1em, aligntableaux=center}
&j_3=0\ \ &\rightarrow \ \ &{\color{gray}\binom{2}{2}} \ \ &\rightarrow \ \
&\begin{ytableau}
3\\
3\\
2\\
1\\
1
\end{ytableau}\\
&j_3=1\ \ &\rightarrow \ \ &{\color{gray}\binom{2}{1}} \ \ &\rightarrow \ \
&\begin{ytableau}
3\\
2\\
1\\
1 & 3
\end{ytableau}\, ,\
\begin{ytableau}
2\\
1 & 3\\
1 & 3
\end{ytableau}
\end{alignat*}
The number $\displaystyle \sum_{j_3=0}^{1}1\begin{bmatrix}
& 2 \\
1 & j_3
\end{bmatrix}^\belr$ counts those \rps\ tableaux whenever $j_2=1$, that
is, it counts the \rps\ tableaux where $2$ is in the bottom row:
\begin{alignat*}{5}
\ytableausetup
{mathmode, boxsize=1em, aligntableaux=center}
&j_3=0\ \ &\rightarrow \ \ &{\color{gray}\binom{3}{2}} \ \ &\rightarrow \ \
&\begin{ytableau}
3\\
3\\
1\\
1 & 2
\end{ytableau}\, ,\
\begin{ytableau}
3\\
1 & 3\\
1 & 2
\end{ytableau}\, ,\
\begin{ytableau}
\none & 3\\
1 & 3\\
1 & 2
\end{ytableau}\\
&j_3=1\ \ &\rightarrow \ \ &{\color{gray}\binom{3}{1}} \ \ &\rightarrow \ \
&\begin{ytableau}
3\\
1\\
1 & 2 & 3
\end{ytableau}\, ,\
\begin{ytableau}
1 & 3\\
1 & 2 & 3
\end{ytableau}\, ,\
\begin{ytableau}
1 & \none & 3\\
1 & 2 & 3
\end{ytableau}
\end{alignat*}

Applying Proposition~\ref{prop:number_Bell_classes} to the standard case, we
get:
\begin{cor}
	If $(1,\ldots,1)$ is a sequence with $n$ symbols $1$, the $n$-th Bell number,
	$B_n$, is given by both $L(1,\ldots,1)$ and $R(1,\ldots,1)$, which are equal
	to
	\begin{align*}
	\displaystyle
	\sum_{0 \leq p_2,\ldots,p_n\leq 1} (1+p_2)^{(1-p_3)}\cdots (1+p_2+\cdots
	+p_{n-1})^{(1-p_n)}.
	\end{align*}
\end{cor}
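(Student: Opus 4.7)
The plan is to specialize Proposition~\ref{prop:number_Bell_classes} to the evaluation $(m_1,\ldots,m_n) = (1,\ldots,1)$ and to combine the resulting formula with the opening observation of this section that the set of standard \ps\ tableaux over $\mathfrak{S}(\aA_n)$ is in bijection with the set partitions of $\aA_n$, whose cardinality is $B_n$. Since in a standard word each symbol occurs at most once, the \lps\ and \rps\ column conditions coincide on the output of Algorithm~\ref{alg:PSinsertion}, so $L(1,\ldots,1) = R(1,\ldots,1) = P(1,\ldots,1) = B_n$. It therefore suffices to evaluate either of the two sums of Proposition~\ref{prop:number_Bell_classes} at $(1,\ldots,1)$ and to check that both produce the advertised closed form.

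For the \lps\ sum, I would substitute $m_k = 1$ for every $k$, so that each $j_k$ in the sum ranges over $\{0,1\}$. Every generic factor becomes
\[\binom{1 + j_2 + \cdots + j_{k-1}}{1 - j_k},\]
which equals $1$ when $j_k = 1$ and equals $1 + j_2 + \cdots + j_{k-1}$ when $j_k = 0$; both cases are captured uniformly by the power $(1 + j_2 + \cdots + j_{k-1})^{1-j_k}$. The $k = 2$ factor collapses to $\binom{1}{1-j_2} = 1$ and so contributes trivially. After relabelling $j_k$ as $p_k$, the surviving product is exactly $(1+p_2)^{1-p_3}\cdots(1+p_2+\cdots+p_{n-1})^{1-p_n}$, giving the stated formula for $L(1,\ldots,1)$.

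For the \rps\ sum, the same substitution produces the factors $\binom{m_k + j_2 + \cdots + j_{k-1}}{m_k - j_k} = \binom{1 + j_2 + \cdots + j_{k-1}}{1 - j_k}$ from Proposition~\ref{prop:number_Bell_classes} (with the leading $\binom{m_2}{m_2 - j_2} = \binom{1}{1 - j_2} = 1$ again being trivial), so the computation is identical and yields the same closed form. This simultaneously re-derives $L(1,\ldots,1) = R(1,\ldots,1)$ by direct calculation and gives the announced expression for $B_n$. No step presents a genuine obstacle: the corollary is a routine arithmetic specialization of Proposition~\ref{prop:number_Bell_classes}, the key small observation being the identity $\binom{1+s}{1-t} = (1+s)^{1-t}$ for $t \in \{0,1\}$ that packages the two possible values of each binomial into a single power.
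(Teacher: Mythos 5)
Your proposal is correct and follows essentially the same route as the paper: both specialize the two sums of Proposition~\ref{prop:number_Bell_classes} at $(1,\ldots,1)$, observe that the $\bell$ and $\belr$ brackets then coincide, and use the identity $\binom{1+s}{1-t}=(1+s)^{1-t}$ for $t\in\{0,1\}$ to rewrite each factor as a power. The only addition is your explicit remark that $L(1,\ldots,1)=R(1,\ldots,1)=P(1,\ldots,1)=B_n$, which the paper establishes in the discussion preceding the corollary rather than inside the proof.
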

\begin{proof}
	Both the numbers $\begin{bmatrix}
	m_1 &  m_2 & \cdots & m_n \\
	0   &  j_2 & \cdots & j_n
	\end{bmatrix}^\bell$ and $\begin{bmatrix}
	&  m_2 & \cdots & m_n \\
	0   &  j_2 & \cdots & j_n
	\end{bmatrix}^\belr$, with $m_1=\ldots =m_n=1$ are given by
	\[ \displaystyle
	\binom{1}{1-j_2} \binom{1+j_2}{1-j_3} \binom{1+j_2+j_3}{1-j_4} \cdots
	\binom{1+j_2+\cdots + j_{n-1}}{1-j_n}.  \]
	Since each $j_a$ is either $0$ or $1$, then $1-j_a$ is either $1$ or $0$ and
	thus $\binom{1+j_2+\cdots+j_a}{1-{j_{a+1}}}= (1+j_2+\cdots+j_a)^{(1-j_{a+1})}$,
	for  $a\in\aA_{n-1}\setminus\{1\}$. The result follows from
	Proposition~\ref{prop:number_Bell_classes}.
\end{proof}

For example, $B_4=\displaystyle \sum_{0\leq p_2,p_3,p_4\leq 1}
(1+p_2)^{(1-p_3)}(1+p_2+p_3)^{(1-p_4)}$. The possible triples of $0$'s and $1$'s
are  $(0,0,0)$, $(1,0,0)$, $(0,1,0)$, $(0,0,1)$, $(0,1,1)$, $(1,0,1)$, $(1,1,0)$
and $(1,1,1)$, which maintaining the order gives the sum $1+2\times 2+2+1+1+2+3
+1=15$.

\section{A hook length formula for standard \ps\ tableaux}
For any partition $\lambda$ of a natural number, the hook-length formula is a
formula that gives the number of standard Young tableaux having shape
$\lambda$. Besides that, the hook-length formula also provides the dimension of
the irreducible representation of the symmetric group associated to $\lambda$.

By the bijectivity of the Robinson correspondence, it follows that the 
hook-length formula provides the number of standard words that insert to a 
specific
standard Young tableaux under Schensted's insertion algorithm.

In this section, our goal is to provide an analogous hook-length formula for 
standard Patience Sorting tableaux. As
shapes of \ps\ tableaux are compositions of natural numbers, we will work with compositions instead of partitions. As a
consequence, we deduce a new formula for the Bell numbers and bounds for both factorial numbers and the number of words
inserting to a specific standard Patience Sorting tableaux.

Consider the following algorithm:

\begin{algorithm}[From pre-tableaux to standard \ps\ tableaux]
	\label{alg:pre_to_ps}
	~\par\nobreak
	\textit{Input:} A pre-tableau $T$ from 
$\tab_\mathcal{B}(\lambda)$, with 
$\lambda=(\lambda_1,\lambda_2,\ldots,\lambda_m)$ and
	$\mathcal{B}\subseteq \aA_n$, for some $n\geq m$.

	\textit{Output:} A standard \ps\ tableau $W(T) \in
	\pstab_\mathcal{B}(\lambda)$.

	\textit{Method:}
	
	Let $T=c_1c_2\cdots c_m$, for pre-columns $c_1,\ldots, c_m$ of height 
$\lambda_1,\ldots,\lambda_m$, respectively.
	\begin{enumerate}
		\item Step 1: Let $(j,k)$ denote the column-row position of the smallest
		symbol from $T$.
		In $T$, exchange the symbol in the column-row position $(j,k)$
		with the symbol in the column-row position $(1,1)$. Then rearrange the
		symbols of the obtained first column in increasing order from bottom to
		top. Denote the obtained tableau by $c_1^{(1)}c_2^{(1)}\cdots
		c_m^{(1)}$;
		\item Step $i$ ($2\leq i\leq m$): Let $(j,k)$ denote the column-row
		position of the smallest symbol from $c_i^{(i-1)}\cdots c_m^{(i-1)}$. In
		$c_i^{(i-1)}\cdots c_m^{(i-1)}$, exchange the symbol in the column-row
		position $(j,k)$ with the symbol in the column-row position $(i,1)$. Then
		rearrange the symbols of the obtained $i$-th column in increasing order
		from bottom to top. Denote the obtained tableau by $c_i^{(i)}\cdots
		c_m^{(i)}$.
	\end{enumerate}
	Output the tableau $W(T)=c_1^{(1)}c_2^{(2)}\cdots c_m^{(m)}$.
\end{algorithm}

Observe that, if $\mathcal{B}\subseteq \aA_n$ for some $n\in\mathbb{N}$ and $\lambda\vDash \lvert{\mathcal{B}}\rvert$, then applying the algorithm
to an arbitrary tableau of $\tab_\mathcal{B}(\lambda)$ always yields a tableau in $\pstab_\mathcal{B}(\lambda)$. Indeed,
if $T=c_1c_2\cdots c_m\in \tab_\mathcal{B}(\lambda)$ and for any $i\in\{1,2, \ldots,m\}$, $x_i$ denotes the bottom
symbol of the column $c_i^{(i)}$ of the tableau $W(T)= c_1^{(1)}c_2^{(2)}\cdots c_m^{(m)}$, then as the symbols from
$W(T)$ are all different and $x_i$ is the smallest symbol from $c_i^{(i-1)}\cdots c_m^{(i-1)}$, we deduce that for all
$i\in \{1,2,\ldots,m-1\}$, $x_i< x_{i+1}$. Also, as for any $i\in\{1,2,\ldots,m\}$, $c_i^{(i)}$ is in increasing order
from bottom to top, we conclude that the obtained tableau $W(T)=c_1^{(1)}c_2^{(2)}\cdots c_m^{(m)}$ belongs to
$\pstab_\mathcal{B}(\lambda)$. Furthermore, for any tableau $T\in \pstab_\mathcal{B} (\lambda)$, we have $W(T)=T$.

For example, considering $\mathcal{B}=\{1,2,4,5,6,7,8,9\}\subseteq \aA_9$, the 
steps that we
obtain from applying the previous algorithm to the pre-tableau
\begin{equation*} \label{exmp12}
\ytableausetup
{aligntableaux=center}
T=c_1c_2c_3c_4=\begin{ytableau}
\none & 4 & \none & \none \\
\none & 5 & 1 & 7 \\
9 & 8 & 6 & 2
\end{ytableau}\in\tab_\mathcal{B}\left((1,3,2,2)\right)
\end{equation*}
are
\begin{align*} \label{steps}
\ytableausetup
{aligntableaux=center}
&\text{Step 1: }T = \begin{ytableau}
\none & 4 & \none & \none \\
\none & 5 & 1 & 7 \\
9 & 8 & 6 & 2
\end{ytableau},\ c_1^{(1)}c_2^{(1)}c_3^{(1)}c_4^{(1)}=\begin{ytableau}
\none & 4 & \none & \none \\
\none & 5 & 9 & 7 \\
1 & 8 & 6 & 2
\end{ytableau}\\
&\text{Step 2: }c_2^{(1)}c_3^{(1)}c_4^{(1)}=\begin{ytableau}
4 & \none & \none \\
5 & 9 & 7 \\
8 & 6 & 2
\end{ytableau},\ c_2^{(2)}c_3^{(2)}c_4^{(2)}=\begin{ytableau}
5 & \none & \none \\
4 & 9 & 7 \\
2 & 6 & 8
\end{ytableau}\\
&\text{Step 3: }c_3^{(2)}c_4^{(2)}=\begin{ytableau}
9 & 7 \\
6 & 8
\end{ytableau},\ c_3^{(3)}c_4^{(3)}=\begin{ytableau}
9 & 7 \\
6 & 8
\end{ytableau}\\
&\text{Step 4: }c_4^{(3)}=\begin{ytableau}
7 \\
8
\end{ytableau},c_4^{(4)}=\begin{ytableau}
8\\
7
\end{ytableau}
\end{align*}
and the algorithm outputs
\begin{align*}
W(T)=c_1^{(1)}c_2^{(2)}c_3^{(3)}c_4^{(4)}=\begin{ytableau}
\none & 5 & \none & \none \\
\none & 4 & 9 & 8 \\
1 & 2 & 6 & 7
\end{ytableau}\in\pstab_\mathcal{B}\left((1,3,2,2)\right).
\end{align*}

These observations allow us to conclude that, for any $n\in\mathbb{N}$, $\mathcal{B}\subseteq \aA_n$ and
$\lambda\vDash \lvert{\mathcal{B}}\rvert$ the map
\begin{align*}
w_{\lambda,\mathcal{B}}:\tab_\mathcal{B}(\lambda)&\rightarrow 
\pstab_\mathcal{B}(\lambda)\\
T&\mapsto W(T)
\end{align*}
is well defined. From the fact that
$W(W(T))=W(T)$ it follows that $w_{\lambda,\mathcal{B}}$ is surjective.

\begin{thm}
	If $\mathcal{B}\subseteq \aA_n$, for some $n\in\mathbb{N}$, $\lambda=(\lambda_1,\lambda_2,\ldots,
	\lambda_m)\vDash \lvert{\mathcal{B}}\rvert$, and $T\in \pstab_\mathcal{B}
	(\lambda)$, then the number of tableaux in $\tab_\mathcal{B}(\lambda)$ that
	map to $T$ under $w_{\lambda,\mathcal{B}}$, $\lvert{w_{\lambda, \mathcal{B}}^{-1} \left(T\right)}\rvert$, is given by the following formula
	\begin{equation*}
	\lvert{w_{\lambda,\mathcal{B}}^{-1}\left(T\right)} \rvert=\prod_{i=0}^{m-1}
	\left(\lvert{\mathcal{B}}\rvert-\sum_{j=1}^{i}\lambda_j\right)\cdot
	\prod_{k=1}^{m} \left(\lambda_k-1\right)!
	\end{equation*}
\end{thm}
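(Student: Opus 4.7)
The plan is to count the pre-images of $T$ by reversing Algorithm~\ref{alg:pre_to_ps} one step at a time. Write $T_0,T_1,\ldots,T_m=T$ for the sequence of tableaux produced when the algorithm is applied to a would-be pre-image $T_0$. Since each forward step is deterministic, $\lvert w_{\lambda,\mathcal{B}}^{-1}(T)\rvert$ is the product, for $i=1,\ldots,m$, of the number of tableaux $T_{i-1}$ that step $i$ turns into the prescribed $T_i$.

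Fix $i$ and write column $i$ of $T_i$ from bottom to top as $a_1<a_2<\dots<a_{\lambda_i}$; by construction of the algorithm, $a_1$ is the minimum of the multiset of symbols appearing in columns $i,\ldots,m$ of $T_{i-1}$. The plan is to split into cases according to the position $(j,k)$ at which $a_1$ sits in $T_{i-1}$. In Case~A ($j=i$), column $i$ of $T_{i-1}$ is an arbitrary permutation of $\{a_1,\ldots,a_{\lambda_i}\}$ while columns $i+1,\ldots,m$ of $T_{i-1}$ coincide with those of $T_i$; the swap (which is internal to column $i$) followed by the sort collapses any such permutation to the prescribed column $i$ of $T_i$, giving $\lambda_i!$ pre-images. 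In Case~B ($j>i$), for each of the $\sum_{j=i+1}^{m}\lambda_j$ admissible positions $(j,k)$, the symbol $y$ appearing at $(j,k)$ in $T_i$ must occupy the bottom cell $(i,1)$ of $T_{i-1}$, $a_1$ sits at $(j,k)$ in $T_{i-1}$, the remaining $\lambda_i-1$ positions of column $i$ carry any permutation of $\{a_2,\ldots,a_{\lambda_i}\}$, and every other cell of $T_{i-1}$ matches $T_i$; this contributes $(\lambda_i-1)!$ pre-images for each $(j,k)$.

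The two cases are disjoint because the position of $a_1$ in $T_{i-1}$ lies in column $i$ in Case~A and strictly to the right in Case~B, and together they are exhaustive. Hence step~$i$ admits
\[ \lambda_i!+(\lambda_i-1)!\sum_{j=i+1}^{m}\lambda_j \;=\; (\lambda_i-1)!\Bigl(\lvert\mathcal{B}\rvert-\sum_{j=1}^{i-1}\lambda_j\Bigr) \]
pre-images of $T_i$. Taking the product over $i=1,\ldots,m$ and reindexing $\prod_{i=1}^{m}\bigl(\lvert\mathcal{B}\rvert-\sum_{j=1}^{i-1}\lambda_j\bigr)$ as $\prod_{i=0}^{m-1}\bigl(\lvert\mathcal{B}\rvert-\sum_{j=1}^{i}\lambda_j\bigr)$ yields the stated formula.

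The main obstacle will be verifying that every $T_{i-1}$ constructed in either case really is sent to $T_i$ by step~$i$. The key point is that step~$i$ preserves the multiset of symbols appearing in columns $i,\ldots,m$ (in Case~B the swap trades $a_1$ and $y$ between columns $i$ and $j$, so both remain in the sub-tableau), which forces $a_1$ to be the minimum of the sub-tableau of $T_{i-1}$, while the final sort produces the prescribed column $i$ of $T_i$ from any admissible arrangement. Once this is established, injectivity of the whole reverse procedure follows step by step because, at each stage, the choice of case, of the position $(j,k)$, and of the permutation of column $i$ is recorded unambiguously in the resulting $T_{i-1}$.
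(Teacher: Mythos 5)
Your argument is correct and is essentially the paper's own proof: the paper peels off one column at a time by induction on $m$, counting $\lvert\mathcal{B}\rvert$ column-row positions for the minimum times $(\lambda_1-1)!$ arrangements of the remainder of the first column at each stage, which is exactly your per-step factor $(\lambda_i-1)!\bigl(\lvert\mathcal{B}\rvert-\sum_{j<i}\lambda_j\bigr)$ unrolled into a product over the $m$ steps. The only cosmetic difference is that the paper parametrizes a pre-image by the position of the minimum together with the already-swapped tableau $\bar{T}'$, which merges your Case~A and Case~B into a single count.
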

\begin{proof}
	The proof follows by induction on the number of columns of $T\in 
\pstab_\mathcal{B}\left(\lambda\right)$, for 
$\lambda=(\lambda_1,\ldots,\lambda_m)$.

	Case $m=1$. Considering the previous algorithm, any disposition of 
symbols from
	$\mathcal{B}$ in a column of length $\lambda_1=\lvert{\mathcal{B}}\rvert$
	will lead to the column tableau obtained by arranging these symbols in
	increasing order from bottom to top. There are $\lvert{\mathcal{B}}\rvert!$
	possibilities for tableaux in these circumstances and
	\begin{align*}
	\prod_{i=0}^{0}\left(\lvert{\mathcal{B}}\rvert -\sum_{j=1}^{0}\lambda_j\right)
	\cdot\prod_{k=1}^{1}\left(\lambda_k-1\right)!&=\lvert{\mathcal{B}}\rvert\cdot
	(\lambda_1-1)!\\
	&=\lvert{\mathcal{B}}\rvert\cdot(\lvert{\mathcal{B}}
	\rvert-1)!=\lvert{\mathcal{B}}\rvert!
	\end{align*}

	Fix $m>1$ and suppose by induction hypothesis that the result is true for
	$m-1$. That is, suppose that for all $\mathcal{C}\subseteq \aA$, $\lambda'=(
	\lambda_1',\ldots, \lambda_{m-1}')$ with $\lambda'\vDash\lvert\mathcal{C}\rvert$
	and $T'=c_1'c_2'\cdots c_{m-1}'\in \pstab_\mathcal{C}(\lambda')$,
	\begin{equation*}
	\lvert{w_{\lambda',\mathcal{C}}^{-1}\left(T'\right)}\rvert=
	\prod_{i=0}^{m-2}\bigg(\lvert{\mathcal{C}}\rvert-\sum_{j=1}^{i}\lambda_j'
	\bigg)\cdot \prod_{k=1}^{m-1}\left(\lambda_k'-1\right)!
	\end{equation*}
	Consider $\mathcal{B}\subseteq \aA_n$ for some $n\in\mathbb{N}$ and $\lambda=(\lambda_1,\lambda_2,\ldots,
	\lambda_m)\in \mathbb{N}^m$ such that $\lambda\vDash \lvert{\mathcal{B}}\rvert$.
	Given a \ps\ tableau $T=c_1c_2\cdots c_m \in \pstab_\mathcal{B}(\lambda)$,
	considering Algorithm~\ref{alg:pre_to_ps}, the tableaux that map to $T$
	under $w_{\lambda,\mathcal{B}}$ are the tableaux
	$\bar{T}=\bar{c_1}\bar{c_2}\cdots\bar{c_m}\in \tab_\mathcal{B}(\lambda)$ that
	contain the smallest symbol from $\mathcal{B}$ in any column-row position and
	such that, if $\bar{T}'=\bar{c_1}'\bar{c_2}'\cdots\bar{c_m}'\in \tab_\mathcal{B}
	(\lambda)$ is the tableau obtained from exchanging the positions of the
	smallest symbol of $\bar{T}$ with the symbol in the column-row position $(1,1)$
	of $\bar{T}$
	then $\cont(\bar{c_1}')= \cont(c_1)$ and $w_{(\lambda_2,\ldots,\lambda_m),
	\cont(\bar{c_2}'\cdots\bar{c_m}')} (\bar{c_2}'\cdots\bar{c_m}')=c_2\cdots c_m$.
	Since there are $\lvert{\mathcal{B}} \rvert$ column-row
	positions in $\bar{T}$ for the smallest symbol, and there are $\lambda_1 -1$
	positions in $\bar{T}'$ to be filled with $\lvert{\cont(c_1)}\rvert-1=\lambda_1-1$
	symbols (the smallest symbol from $\mathcal{B}$ is already in the bottom left box
	of $\bar{T}'$), there are
	\[\lvert{\mathcal{B}}\rvert\cdot(\lambda_1-1)!\cdot\lvert{w^{-1}_{(\lambda_2, \ldots,\lambda_m),
			\cont(c_2\cdots c_m)}(c_2\cdots c_m)}\rvert\]
	tableaux that will map to $T$ under $w_{\lambda,\mathcal{B}}$.
	By the induction hypothesis,
	\begin{align*}
	&\lvert{w^{-1}_{(\lambda_2,\ldots,\lambda_m),\cont(c_2\cdots c_m)}
		(c_2\cdots c_m)}\rvert\\
	={}&\prod_{i=1}^{m-1}\bigg((\lambda_2+\cdots+\lambda_m)-\sum_{j=2}^{i}
	\lambda_j\bigg)\cdot \prod_{k=2}^{m}\left(\lambda_k-1\right)!\\
	={}&\prod_{i=1}^{m-1}\bigg((\lvert{\mathcal{B}}\rvert-\lambda_1)-\sum_{j=2}^{i}
	\lambda_j\bigg)\cdot\prod_{k=2}^{m}\left(\lambda_k-1\right)!\ .
	\end{align*}
	So,
	\begin{align*}
	\lvert{w^{-1}_{\lambda,\mathcal{B}}(T)}\rvert&=\lvert{\mathcal{B}}\rvert\cdot
	(\lambda_1-1)!\cdot	\prod_{i=1}^{m-1}\bigg((\lvert{\mathcal{B}}\rvert-\lambda_1)-
	\sum_{j=2}^{i}\lambda_j\bigg)\cdot \prod_{k=2}^{m}\left(\lambda_k-1\right)!\\
	&=\prod_{i=0}^{m-1}\bigg(\lvert{\mathcal{B}}\rvert-\sum_{j=1}^{i}\lambda_j\bigg)
	\cdot\prod_{k=1}^{m}\left(\lambda_k-1\right)!\ .
	\end{align*}
	The result follows by induction.
\end{proof}
\begin{cor}\label{pstablambda}
	If $\mathcal{B}\subseteq \aA_n$ for some $n\in\mathbb{N}$, and
	$\lambda=(\lambda_1,\lambda_2,\ldots,\lambda_m)\vDash\lvert{\mathcal{B}}\rvert$,
	\begin{align*}
	\lvert{\pstab_\mathcal{B}(\lambda)}\rvert=\frac{(\lvert{\mathcal{B}}\rvert
		-1)!}{\prod_{i=1}^{m-1} \Big(\lvert{\mathcal{B}}\rvert-\sum_{j=1}^{i}
		\lambda_j\Big) \cdot \prod_{k=1}^{m}\left(\lambda_k-1\right)!}
	\end{align*}
\end{cor}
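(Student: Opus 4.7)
The plan is to exploit Algorithm~\ref{alg:pre_to_ps} together with the preceding theorem. Recall that the map $w_{\lambda,\mathcal{B}}:\tab_\mathcal{B}(\lambda)\to\pstab_\mathcal{B}(\lambda)$ is surjective, since $W(T)=T$ whenever $T\in\pstab_\mathcal{B}(\lambda)$. Its domain therefore partitions into the fibers $w_{\lambda,\mathcal{B}}^{-1}(T)$ as $T$ ranges over $\pstab_\mathcal{B}(\lambda)$, and by the theorem each such fiber has cardinality
\[
\prod_{i=0}^{m-1}\left(\lvert\mathcal{B}\rvert-\sum_{j=1}^{i}\lambda_j\right)\cdot\prod_{k=1}^{m}(\lambda_k-1)!,
\]
a quantity that does \emph{not} depend on the particular $T$ chosen.

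Next, I would observe that $\lvert\tab_\mathcal{B}(\lambda)\rvert=\lvert\mathcal{B}\rvert!$, because a pre-tableau in $\tab_\mathcal{B}(\lambda)$ is simply a bijective filling of the $\lambda_1+\cdots+\lambda_m=\lvert\mathcal{B}\rvert$ boxes of the composition diagram of shape $\lambda$ by the distinct symbols of $\mathcal{B}$. Dividing the total count by the common fiber size then yields
\[
\lvert\pstab_\mathcal{B}(\lambda)\rvert=\frac{\lvert\tab_\mathcal{B}(\lambda)\rvert}{\lvert w_{\lambda,\mathcal{B}}^{-1}(T)\rvert}=\frac{\lvert\mathcal{B}\rvert!}{\prod_{i=0}^{m-1}\left(\lvert\mathcal{B}\rvert-\sum_{j=1}^{i}\lambda_j\right)\cdot\prod_{k=1}^{m}(\lambda_k-1)!}.
\]

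To recast this in the form stated in the corollary, I would isolate the $i=0$ factor of the first product in the denominator: since the inner sum is empty, it equals $\lvert\mathcal{B}\rvert$, which cancels against one factor in $\lvert\mathcal{B}\rvert!=\lvert\mathcal{B}\rvert\cdot(\lvert\mathcal{B}\rvert-1)!$, leaving $(\lvert\mathcal{B}\rvert-1)!$ in the numerator and $\prod_{i=1}^{m-1}\!\bigl(\lvert\mathcal{B}\rvert-\sum_{j=1}^{i}\lambda_j\bigr)\cdot\prod_{k=1}^{m}(\lambda_k-1)!$ in the denominator. This is exactly the claimed formula. There is no real obstacle: the entire argument rests on the uniform fiber size supplied by the preceding theorem, which in turn made the counting of pre-tableaux in $\tab_\mathcal{B}(\lambda)$ mapping to a given \ps\ tableau independent of the target.
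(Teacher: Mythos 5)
Your argument is correct and coincides with the paper's own proof: both count $\lvert\tab_\mathcal{B}(\lambda)\rvert=\lvert\mathcal{B}\rvert!$, use the surjectivity of $w_{\lambda,\mathcal{B}}$ together with the theorem's uniform fiber size to write this as $\lvert\pstab_\mathcal{B}(\lambda)\rvert$ times that fiber size, and then cancel the $i=0$ factor $\lvert\mathcal{B}\rvert$ against $\lvert\mathcal{B}\rvert!$. No gaps.
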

\begin{proof}
	Given $\mathcal{B}\subseteq\aA_n$ for some $n\in\mathbb{N}$ and $\lambda=(\lambda_1,\lambda_2,\ldots,
	\lambda_m)\vDash \lvert{\mathcal{B}}\rvert$, \[\lvert{\tab_\mathcal{B}(\lambda)}
	\rvert= \lvert{\mathcal{B}}\rvert!\]
	This number is also given by the sum of the cardinality of the pre-images of the
	tableaux $T\in \pstab_\mathcal{B}(\lambda)$ under $w_{\lambda,\mathcal{B}}$, that
	is,
	\[\lvert{\tab_\mathcal{B}(\lambda)}\rvert=\sum_{T\in \pstab_\mathcal{B}(\lambda)}\lvert{w^{-1}_{\lambda,\mathcal{B}}(T)}\rvert\]
	By the previous theorem, the number $\lvert{w^{-1}_{\lambda,\mathcal{B}}(T)}
	\rvert$ depends only on $\mathcal{B}$ and $\lambda$ and not on the tableau $T$.
	Thus, using the previous theorem it follows that
	\[\lvert{\mathcal{B}}\rvert!=\lvert{\pstab_\mathcal{B}(\lambda)}\rvert\cdot
	\prod_{i=0}^{m-1}\bigg( \lvert{\mathcal{B}}\rvert-\sum_{j=1}^{i} \lambda_j
	\bigg)\cdot \prod_{k=1}^{m}\left(\lambda_k-1\right)!\]
	So,
	\begin{align*}
	\lvert{\pstab_\mathcal{B}(\lambda)}\rvert&=\frac{\lvert{\mathcal{B}}\rvert!}
	{\prod_{i=0}^{m-1}\Big(\lvert{\mathcal{B}}\rvert-\sum_{j=1}^{i}\lambda_j
		\Big)\cdot \prod_{k=1}^{m} \left(\lambda_k-1\right)!}\\
	&=\frac{(\lvert{\mathcal{B}}\rvert-1)!}{\prod_{i=1}^{m-1}\Big(\lvert{
			\mathcal{B}}\rvert-\sum_{j=1}^{i}
		\lambda_j\Big)\cdot \prod_{k=1}^{m}\left(\lambda_k-1\right)!}.
	\end{align*}
	and the result follows.
\end{proof}

The $n$-th Bell number $B_n$ is the number of partitions of a set with $n$
distinct elements.  As noted at the beginning of Section~\ref{sec:bellnumbers},
the partitions of a set $\mathcal{B}\subseteq \aA_k$ for some $k\in\mathbb{N}$
are in one-to-one correspondence
with the standard \ps\ tableaux from the set $\pstab_\mathcal{B}$.
Thus, since
\begin{equation*}
\pstab_\mathcal{B}=\bigcup_{\substack{\lambda\vDash \lvert{\mathcal{B}}\rvert}} \pstab_\mathcal{B}(\lambda),
\end{equation*}
if $n=\lvert{\mathcal{B}}\rvert$, the $n$-th Bell number is also given by
\[\lvert{\pstab_\mathcal{B}}\rvert=\sum_{\lambda\vDash n}\lvert{\pstab_\mathcal{B}(\lambda)}\rvert\]
and therefore
\begin{thm}
	For any $n\in \mathbb{N}$, the $n$-th Bell number, $B_n$, is given by the following formula
	\[B_n=\sum_{(\lambda_1,\ldots,\lambda_m)\vDash n}\Bigg(\frac{(n-1)!}{\prod_{i=1}^{m-1}\Big(n-\sum_{j=1}^{i}\lambda_j\Big)\cdot \prod_{k=1}^{m}\left(\lambda_k-1\right)!}\Bigg).\]
\end{thm}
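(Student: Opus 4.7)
The plan is to combine Corollary~\ref{pstablambda} with the bijective interpretation of the Bell numbers already recalled at the start of Section~\ref{sec:bellnumbers}. Concretely, I would fix any set $\mathcal{B}\subseteq\aA$ with $\lvert\mathcal{B}\rvert=n$ (for example $\mathcal{B}=\aA_n$), and recall that the representation of set partitions described there yields a bijection between partitions of $\mathcal{B}$ and standard \ps\ tableaux in $\pstab_{\mathcal{B}}$. Hence $B_n=\lvert\pstab_{\mathcal{B}}\rvert$.

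Next I would decompose $\pstab_{\mathcal{B}}$ according to shape. Since every standard \ps\ tableau over $\mathcal{B}$ has a well-defined shape, and that shape is a composition of $\lvert\mathcal{B}\rvert=n$, the union
\[
\pstab_{\mathcal{B}}=\bigcup_{\lambda\vDash n}\pstab_{\mathcal{B}}(\lambda)
\]
is disjoint. Taking cardinalities gives $B_n=\sum_{\lambda\vDash n}\lvert\pstab_{\mathcal{B}}(\lambda)\rvert$.

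Finally, I would invoke Corollary~\ref{pstablambda}: for each composition $\lambda=(\lambda_1,\ldots,\lambda_m)\vDash n$,
\[
\lvert\pstab_{\mathcal{B}}(\lambda)\rvert=\frac{(n-1)!}{\prod_{i=1}^{m-1}\bigl(n-\sum_{j=1}^{i}\lambda_j\bigr)\cdot\prod_{k=1}^{m}(\lambda_k-1)!}\,,
\]
and substitute this into the sum to get the stated formula. There is no real obstacle here; the only point that requires a little care is observing that the right-hand side of Corollary~\ref{pstablambda} depends only on $n=\lvert\mathcal{B}\rvert$ and $\lambda$, not on the particular alphabet $\mathcal{B}$, so the derivation is valid for any $\mathcal{B}$ with $\lvert\mathcal{B}\rvert=n$ and the resulting identity is a genuine formula for $B_n$.
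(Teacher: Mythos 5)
Your proposal is correct and follows exactly the paper's own derivation: identify $B_n$ with $\lvert\pstab_{\mathcal{B}}\rvert$ via the set-partition bijection, split by shape into the disjoint union $\bigcup_{\lambda\vDash n}\pstab_{\mathcal{B}}(\lambda)$, and substitute Corollary~\ref{pstablambda}. The extra observation that the formula depends only on $n$ and $\lambda$ is a harmless (and accurate) refinement of what the paper leaves implicit.
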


For example, considering $\mathcal{B}=\{1,2,3,4\}$
\begin{align*}
B_4&=\lvert{\pstab_\mathcal{B}\left((4)\right)}\rvert+\lvert{\pstab_\mathcal{B}\left((3,1)\right)}\rvert+\lvert{\pstab_\mathcal{B}\left((1,3)\right)}\rvert\\
&+\lvert{\pstab_\mathcal{B}\left((2,2)\right)}\rvert+\lvert{\pstab_\mathcal{B}\left((2,1,1)\right)}\rvert+\lvert{\pstab_\mathcal{B}\left((1,2,1)\right)}\rvert\\
&+\lvert{\pstab_\mathcal{B}\left((1,1,2)\right)}\rvert+\lvert{\pstab_\mathcal{B}\left((1,1,1,1)\right)}\rvert\\
&=1+3+1+3+3+2+1+1=15.
\end{align*}

The algorithm of extended \ps\ insertion from Section~\ref{sec:rsklike} (Algorithm~\ref{alg:ExtendedPS}) only defines an injection between the set of standard words $\mathfrak{S}(\mathcal{B})$, where $\mathcal{B}\subseteq \aA_n$ for some $n\in\mathbb{N}$, and the set of pairs of standard \ps\ tableaux of the same shape over $\mathcal{B}$, $\bigcup_{\lambda\vDash \lvert{\mathcal{B}}\rvert}\left(\pstab_\mathcal{B}(\lambda)\times\pstab_\mathcal{B}(\lambda)\right)$. Therefore,

\begin{prop}
	For any $n\in\mathbb{N}$,
	\[n!\leq \sum_{(\lambda_1,\ldots,\lambda_m)\vDash n}\Bigg(\frac{(n-1)!}{\prod_{i=1}^{m-1}\Big(n-\sum_{j=1}^{i}\lambda_j\Big)\cdot \prod_{k=1}^{m}\left(\lambda_k-1\right)!}\Bigg)^2.\]
\end{prop}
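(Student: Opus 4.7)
The plan is to combine the injective Robinson--Schensted-like correspondence for standard words (the specialization of Algorithm~\ref{alg:ExtendedPS} to standard input, as recorded in Proposition~\ref{prop:std_rob_schensted}/the discussion preceding the statement) with the enumeration in Corollary~\ref{pstablambda}. Fix any $\mathcal{B}\subseteq \aA_n$ with $\lvert\mathcal{B}\rvert=n$, for instance $\mathcal{B}=\aA_n$; then $\lvert\mathfrak{S}(\mathcal{B})\rvert = n!$.

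Next I would invoke the injection
\[\mathfrak{S}(\mathcal{B})\longrightarrow \bigcup_{\lambda\vDash n}\bigl(\pstab_\mathcal{B}(\lambda)\times\pstab_\mathcal{B}(\lambda)\bigr),\qquad \sigma\mapsto (\pP(\sigma),\qQ(\sigma)),\]
provided by extended \ps\ insertion (this is exactly the injectivity asserted in the paragraph preceding the proposition, since Algorithm~\ref{alg:ExtendedPS} is deterministic and the insertion tableau $\pP(\sigma)$ is a standard \ps\ tableau of the same shape as $\qQ(\sigma)$). The union on the right is disjoint, so taking cardinalities gives
\[n! \;\le\; \sum_{\lambda\vDash n}\lvert\pstab_\mathcal{B}(\lambda)\rvert^2.\]

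Finally I would substitute the closed form from Corollary~\ref{pstablambda}: for every composition $\lambda=(\lambda_1,\ldots,\lambda_m)\vDash n$,
\[\lvert\pstab_\mathcal{B}(\lambda)\rvert \;=\; \frac{(n-1)!}{\prod_{i=1}^{m-1}\bigl(n-\sum_{j=1}^{i}\lambda_j\bigr)\cdot \prod_{k=1}^{m}(\lambda_k-1)!}.\]
Squaring and summing over $\lambda\vDash n$ yields the stated inequality.

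There is essentially no obstacle here; the content of the proposition is that the bound from Corollary~\ref{pstablambda} is compatible with the cardinality of $\mathfrak{S}_n$ via the injection. The only minor point to check carefully is that Algorithm~\ref{alg:ExtendedPS}, applied to a standard word $\sigma\in\mathfrak{S}(\mathcal{B})$, really does produce a pair of standard \ps\ tableaux of the same shape with $\pP(\sigma)$ having entries in $\mathcal{B}$ and $\qQ(\sigma)$ having entries in $\aA_n$ (this is immediate from the algorithm, since exactly one symbol of $\sigma$ is inserted at each step), and that distinct $\sigma$ produce distinct pairs (which follows because from $(\pP(\sigma),\qQ(\sigma))$ the reading procedure illustrated in Figure~\ref{figure:reading_tableaux} reconstructs $\sigma$ uniquely).
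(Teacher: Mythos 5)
Your argument is correct and is essentially the paper's own proof: both use the injectivity of the extended \ps\ insertion map from $\mathfrak{S}(\mathcal{B})$ into $\bigcup_{\lambda\vDash n}\bigl(\pstab_\mathcal{B}(\lambda)\times\pstab_\mathcal{B}(\lambda)\bigr)$, take cardinalities of the disjoint union, and substitute the formula of Corollary~\ref{pstablambda}. Your choice $\mathcal{B}=\aA_n$ and the remark that the reading procedure reconstructs $\sigma$ are exactly the justifications the paper relies on implicitly.
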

\begin{proof}
	Follows from the observations before the proposition together with the fact that for any $\mathcal{B}\subseteq \aA_k$ for some $k\in\mathbb{N}$, with $\lvert{\mathcal{B}}\rvert=n$, $\lvert{\mathfrak{S}(\mathcal{B})}\rvert=n!$ and Corollary~\ref{pstablambda}.
\end{proof}

\begin{prop}
	For any $k,n\in\mathbb{N}$ and $\mathcal{B}\subseteq \aA_k$ such that $\lvert{\mathcal{B}}\rvert=n$, if $\lambda\vDash n$ and $T\in \pstab_\mathcal{B}(\lambda)$, then
	\[\left\lvert{\left\{\sigma \in \mathfrak{S}(\mathcal{B}):\pP(\sigma)=T\right\}}\right\rvert\leq \frac{(n-1)!}{\prod_{i=1}^{m-1}\Big(n-\sum_{j=1}^{i}\lambda_j\Big)\cdot \prod_{k=1}^{m}\left(\lambda_k-1\right)!}.\]
\end{prop}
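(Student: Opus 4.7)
The plan is to exploit the injectivity of the extended \ps\ insertion from Algorithm~\ref{alg:ExtendedPS} when restricted to standard words, as emphasized in the paragraph immediately preceding the proposition, and then to convert the resulting bound on recording tableaux into the stated numerical form by invoking Corollary~\ref{pstablambda}.

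Concretely, fix $T\in \pstab_\mathcal{B}(\lambda)$ and consider the set $X=\{\sigma\in\mathfrak{S}(\mathcal{B}): \pP(\sigma)=T\}$. For each $\sigma\in X$, Algorithm~\ref{alg:ExtendedPS} produces a pair $(\pP(\sigma),\qQ(\sigma))=(T,\qQ(\sigma))$, where $\qQ(\sigma)$ is a recording tableau of the same shape as $T$, namely $\sh(\qQ(\sigma))=\lambda$, with entries in $\aA_n$ (each symbol of $\aA_n$ occurring exactly once). In particular, $\qQ(\sigma)\in \pstab_{\aA_n}(\lambda)$. Because Algorithm~\ref{alg:ExtendedPS} is injective on standard words (this is the content of the remarks and of Proposition~\ref{prop:std_rob_schensted} from which the injectivity is used in that paragraph), the assignment $\sigma\mapsto \qQ(\sigma)$ is injective on $X$.

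Therefore $\lvert X\rvert \leq \lvert\pstab_{\aA_n}(\lambda)\rvert$. By Corollary~\ref{pstablambda} applied with the alphabet $\aA_n$ in place of $\mathcal{B}$ (both have cardinality $n$, and the formula in that corollary depends only on $\lvert\mathcal{B}\rvert$ and $\lambda$), we obtain
\[
\lvert\pstab_{\aA_n}(\lambda)\rvert=\frac{(n-1)!}{\prod_{i=1}^{m-1}\!\Big(n-\sum_{j=1}^{i}\lambda_j\Big)\cdot \prod_{k=1}^{m}(\lambda_k-1)!},
\]
which is exactly the right-hand side of the claimed inequality, finishing the proof.

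There is essentially no obstacle: the only subtle point to check is that the map $\sigma\mapsto \qQ(\sigma)$ lands in $\pstab_{\aA_n}(\lambda)$ (it does, since any recording tableau is by definition a standard \ps\ tableau over $\aA_n$), and that Corollary~\ref{pstablambda} applies uniformly to any alphabet of cardinality $n$, which is immediate from the form of its right-hand side.
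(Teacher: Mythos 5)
Your proof is correct and follows essentially the same route as the paper: both arguments rest on the injectivity of $\sigma\mapsto(\pP(\sigma),\qQ(\sigma))$ on standard words together with the count of standard \ps\ tableaux of shape $\lambda$ from Corollary~\ref{pstablambda}. Your only divergence is the (welcome) explicit observation that the recording tableaux lie in $\pstab_{\aA_n}(\lambda)$ rather than $\pstab_{\mathcal{B}}(\lambda)$, which does not affect the bound since the formula depends only on $n=\lvert\mathcal{B}\rvert$ and $\lambda$.
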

\begin{proof}
	It is straightforward that
	\[\left\lvert{\left\{\sigma \in \mathfrak{S}(\mathcal{B}):\pP(\sigma)=T\right\}}\right\rvert=\left\lvert{\left\{\big(\pP(\sigma),\qQ(\sigma)\big):\sigma\in\mathfrak{S}(\mathcal{B})\wedge \pP(\sigma)=T\right\}}\right\rvert.\]
	It follows that
	\[\left\{\left(\pP(\sigma),\qQ(\sigma)\right):\sigma\in \mathfrak{S}(\mathcal{B})\wedge \pP(\sigma)=T\right\}\subseteq \left\{T\right\}\times \pstab_\mathcal{B}(\lambda)\]
	and thus we deduce the result.
\end{proof}

The inequality of the previous proposition is, in general, strict. For instance,
considering the set $\pstab_{\{2,4,5,6\}}\left((2,2)\right)=\left\{\ytableausetup
{aligntableaux=center}\begin{ytableau}
4 & 6\\
2 & 5
\end{ytableau},\begin{ytableau}
5 & 6\\
2 & 4
\end{ytableau},\begin{ytableau}
6 & 5\\
2 & 4
\end{ytableau}\right\}$ and fixing $T=\begin{ytableau}
5 & 6\\
2 & 4
\end{ytableau}$, then reading $T$ according to all the possibilities for recording
tableaux in $\pstab_{\{2,4,5,6\}}\left((2,2)\right)$ leads to the standard
words $\sigma=5264, \tau=5624, \upsilon=5642$, respectively. However, according to
Algorithm~\ref{alg:PSinsertion}, only $\sigma$ and $\tau$ insert to $T$ .


\bibliographystyle{alphaabbrv}
\bibliography{mybib}

\end{document}